\def\ol{\overline}
\def\ie{\textit{i.e.}}
\title{Stationary discrete shock profiles for scalar conservation laws with a discontinuous Galerkin method}
\author{Florent Renac\thanks{ONERA The French Aerospace Lab, 92320 Ch\^atillon Cedex, France ({\tt florent.renac@onera.fr}).}}
\begin{document}

\maketitle

\begin{abstract}
We present an analysis of stationary discrete shock profiles for a discontinuous Galerkin method approximating scalar nonlinear hyperbolic conservation laws with a convex flux. Using the Godunov method for the numerical flux, we characterize the steady state solutions for arbitrary approximation orders and show that they are oscillatory only in one mesh cell and are parametrized by the shock strength and its relative position in the cell. In the particular case of the inviscid Burgers equation, we derive analytical solutions of the numerical scheme and predict their oscillations up to fourth-order of accuracy. Moreover, a linear stability analysis shows that these profiles may become unstable at points where the Godunov flux is not differentiable. Theoretical and numerical investigations show that these results can be extended to other numerical fluxes. In particular, shock profiles are found to vanish exponentially fast from the shock position for some class of monotone numerical fluxes and the oscillatory and unstable characters of their solutions present strong similarities with that of the Godunov method.

\end{abstract}

\begin{keywords}
discontinuous Galerkin method, discrete shock profile, scalar conservation laws, convex flux, inviscid Burgers equation, linear stability, spectral viscosity
\end{keywords}

\begin{AMS}
65N30, 65N12
\end{AMS}

\pagestyle{myheadings}
\thispagestyle{plain}
\markboth{F. RENAC}{DG SHOCK PROFILES OF SCALAR CONSERVATION LAWS}

%
%
\section{Introduction}

Discontinuous Galerkin (DG) methods are high-order finite element discretizations which were introduced in the early 1970s for the numerical simulation of the first-order hyperbolic neutron transport equation \cite{lesaint-raviart74,reed-hill73}. In recent years, these methods have become very popular for the solution of nonlinear convection dominated flow problems \cite{cockburn-shu89,cockburn-shu01,adigma-book10}. These methods allow high-order of accuracy and locality, which make them well suited to parallel computing, $hp$-refinement, $hp$-multigrid, unstructured meshes, application of boundary conditions, etc. 

However, the DG method suffers from spurious oscillations in the vicinity of discontinuities that develop in solutions of hyperbolic systems of conservation laws. These oscillations are due to the Gibbs phenomenon \cite{gottlieb_shu97} and may cause the solution to become locally nonphysical leading to robustness issues for the computation. Quadrature rules are usually used to compute integrals in the discretization of the equations. Properties of the DG method therefore depend on the local structure of the numerical solution at faces and within elements of the mesh. The control of such oscillations at a reasonable cost while keeping accuracy, robustness and stability is essential for the efficiency of the DG method and remains a challenge. Strategies have been proposed such as limiters \cite{cockburn-shu89,cockburn-shu01}, non-oscillatory reconstructions \cite{abgrall_94,qiu-shu_05}, $hp$-adaptation \cite{hartmann-houston02b}, shock capturing techniques \cite{persson-peraire06,guermond-etal_11}, etc. The latter methods aim at adding artificial viscosity to spread the structure of the discontinuity so that it can be resolved at the discrete level. For a DG method with polynomials of degree $p>0$ and cells of size $h$, the resolution scale is $h/p$ thus meaning that the method should resolve the discontinuity inside the element \cite{persson-peraire06}. The behavior of the numerical solution inside the discretization elements is therefore important for understanding the convergence of the DG computations.

In this work, we focus on the DG method for scalar nonlinear conservation laws which present stationary discontinuous solutions. More precisely, we are interested in the behavior of discrete profiles near shocks. Jennings \cite{jennings74} studied the approximation of scalar equations by monotone conservative finite difference schemes and proved the existence and stability of traveling discrete shocks. Discrete profiles for scalar equations were also studied in \cite{jiang-yu98,liu_et-al_99,osher-ralston82}. The analysis was then extended to systems in \cite{majda_raltson79,michelson84}. Bultelle et al. \cite{bultelle_et-al98} analyzed the linear stability of steady shock profiles obtained for general systems of conservation laws discretized with the Godunov method and constructed unstable profiles in the case of the Euler equations for gas dynamics. Recently, Lerat \cite{lerat_13} found exact discrete shock solutions for residual-based compact schemes \cite{lerat_corre01} up to seventh-order of accuracy discretizing the inviscid Burgers equation. Solutions were derived explicitly and were parametrized by the relative position of the shock in the discretization cell. Such analysis may help to tune parameters of the numerical method. In the context of DG methods, Cockburn and Guzm\'an \cite{cockburn_guzman08} considered a formally second order approximation of a scalar linear hyperbolic equation with discontinuous initial condition. They gave estimates of the size of extent of oscillations upstream and downstream of the discontinuity based on suitable weights introduced in \cite{johnson_etal84}. Recently, their work was extended to arbitrary approximation order in space and a third-order Runge-Kutta method on non-uniform meshes \cite{zhang-shu10,zhang-shu14}. The case of $\delta$-singularities in initial condition and source term of a scalar hyperbolic equation was then investigated in \cite{yang-shu13} where superconvergence in negative-order norms outside the pollution region was proved and sharp estimates over the whole domain were given.

The objective of this work is the theoretical analysis of stationary discrete shock solutions for the DG method discretizing a scalar conservation law with a general convex flux. We will mainly consider the Godunov method to evaluate the numerical flux for which explicit solutions can be derived, but we will also focus on other numerical fluxes widely used in the context of DG methods. To this end, we first give general results about the structure of the stationary discrete profiles. These results are local in the sense that they consider profiles that are small perturbations to the exact solution. In the case of the Godunov flux, we establish the mean value of the solution in the cell containing the exact shock position and the complete solution in other cells for an arbitrary approximation order. The analysis also predicts exponential decay of oscillations of the discrete profile on both sides of the shock for a certain class of monotone numerical fluxes. Then, the linear stability of these profiles is investigated and eigenvalues are characterized for the Godunov flux. As a result, the shock profiles may become unstable at points where the numerical flux is not differentiable. These points contain the situation of a shock at interface which is counter-intuitive since the exact solution is a piecewise constant function over all cells and is included in the function space of the DG method.

Considering the inviscid Burgers equation we derive exact discrete shock profiles for the DG scheme up to fourth-order of accuracy in the spirit of the work of Lerat \cite{lerat_13}. The results allow to evaluate quantitatively the structure of the solution within elements and predict situations where the numerical solution violates locally an entropy condition at the cell level. The theoretical stability analysis predicts the occurrence of unstable profiles when the exact shock position is close enough to an interface of the mesh. Numerical experiments will suggest that many features of the DG method obtained with the Godunov flux still hold for other numerical fluxes. In particular, for a given space discretization, the oscillating and unstable characters of the solution depend on the strength and exact position of the shock. As this latter feature is generally unknown, it strongly complicates the analysis for enhancing stability and robustness of the DG method. These results may support approaches based on {\it a posteriori} limitation techniques such as the MOOD method \cite{clain_etal11,berthon_desveaux14}. Artificial viscosity represent another attractive approach providing that the amount of viscosity adapts itself to the regularity of the solution. Theoretical results in this work will suggest the addition of artificial viscosity to the highest modes in the DG space spanned by hierarchical basis functions. As an illustration, we apply the spectral vanishing viscosity method \cite{maday_tadmor89} with a selective filter in the Legendre basis as proposed in  \cite{maday_etal93}.


The paper is organized as follows. Section \ref{sec:numer_approach} presents the model problem and the numerical approach for the discretization. In section \ref{sec:steady_shock_sol_conv}, we consider a general convex flux and analyze discrete shock profiles for the DG method. These profiles are explicitly constructed in the case of the inviscid Burgers equation in section \ref{sec:steady_shock_sol_burgers}. A linear stability analysis is performed in the vicinity of these solutions in section~\ref{sec:ana_stab}. These results are assessed by several numerical experiments in section \ref{sec:num_xp} and a first attempt for stabilizing the numerical scheme is proposed in section~\ref{sec:SVM}. Finally, concluding remarks about this work are given in section \ref{sec:conclusions}.

\section{Model problem and discretization}\label{sec:numer_approach}

\subsection{Nonlinear scalar equation}\label{sec:model_eqn}

The discussion in this paper focuses on the discretization of scalar nonlinear hyperbolic equations in one space dimension with a DG method. Let $\Omega=\mathbb{R}$ be the space domain and consider the following problem
\begin{subeqnarray}\label{eq:model_pb}
 \partial_tu + \partial_xf(u) &=& 0, \hspace{1.2cm} \mbox{in }\Omega\times(0,\infty),\\
  u(x,0) &=& u_0(x),\hspace{0.55cm} \mbox{in } \Omega. 
\end{subeqnarray}

The physical flux $f$ in ${\cal C}^2(\Omega_{a})$ is assumed to be coercive and strictly convex over the set of admissible states $\Omega_{a}\subset\mathbb{R}$: $\lim_{u\rightarrow\pm\infty}f(u)=+\infty$ and $f''(u)>0$ for all $u$ in $\Omega_a$. We are particularly interested in steady shock solutions to (\ref{eq:model_pb}). Such solutions consist in stationary discontinuities between two states 
\begin{equation}\label{eq:exact_sol}
 u(x):= \lim_{t\rightarrow\infty}u(x,t) = \left\{
 \begin{array}{rcl}
  u_L & \mbox{if} & x< x_c, \\
  u_R & \mbox{if} & x> x_c,
 \end{array}
 \right.
\end{equation}

\noindent where $x_c$ denotes the shock position and the states satisfy the Rankine-Hugoniot relation 
\begin{equation}\label{eq:RH_cond}
 f(u_L)=f(u_R)=f_\infty,
\end{equation}
\noindent and the Lax entropy condition 
\begin{equation}\label{eq:Lax_cond}
 f'(u_L)>0>f'(u_R).
\end{equation}

The above equation may be written in the equivalent form 
\begin{equation}\label{eq:Lax_cond_hat-u}
 u_L>\hat{u}>u_R,
\end{equation}

\noindent where $\hat{u}$ is the unique state such that $f'(\hat{u})=0$. Integrating (\ref{eq:model_pb}a) in space over $\Omega$, one obtains $d_t\int_\Omega udx=f(u_L)-f(u_R)=0$, which induces
\begin{equation}\label{eq:global_cons_edp}
 \int_\Omega u dx=\int_\Omega u_0 dx.
\end{equation}

\subsection{Discontinuous Galerkin formulation}\label{sec:DG_discr}

The DG method consists in defining a discrete weak formulation of problem (\ref{eq:model_pb}). The domain is discretized with a uniform grid $\Omega_h=\cup_{j\in\mathbb{Z}}\kappa_j$ with cells $\kappa_j = [x_{j-1/2}, x_{j+1/2}]$, $x_{j+1/2}=(j+\tfrac{1}{2})h$ and $h>0$ the space step (see Figure \ref{fig:stencil_1D}).

\subsubsection{Numerical solution and Legendre polynomials} We look for approximate solutions in the function space of discontinuous polynomials
\begin{equation}\label{eq:Vhp-space}
 {\cal V}_h^p=\{v_h\in L^2(\Omega_h):\;v_h|_{\kappa_{j}}\in{\cal P}_p(\kappa_{j}),\; \kappa_j\in\Omega_h\},
\end{equation}

\noindent where ${\cal P}_p(\kappa_{j})$ denotes the space of polynomials of degree at most $p$ in the element $\kappa_{j}$. The approximate solution to problem (\ref{eq:model_pb}) is sought under the form
\begin{equation}\label{eq:num_sol}
 u_h(x,t)=\sum_{l=0}^{p}\phi_j^l(x)U_j^{l}(t), \quad \forall x\in\kappa_{j},\, \kappa_j\in\Omega_h,\, t\geq0,
\end{equation}

\noindent where $U_j^l$ are the degrees of freedom (DOFs) in the element $\kappa_j$. The subset $(\phi_j^0,\dots,\phi_j^{p})$ constitutes a basis of ${\cal V}_h^p$ restricted onto a given element. In this work we will use the Legendre polynomials $L_{0\leq k\leq p}$. The basis functions in a given element $\kappa_j$ thus write $\phi_j^k(x)=L_k(2(x-x_j)/h)$ where $x_j=(x_{j+1/2}+x_{j-1/2})/2$ denotes the center of the element. Orthogonality of the Legendre polynomials induces
\begin{equation}\label{eq:ortho_basis}
 \int_{\kappa_j} \phi_j^k(x)\phi_j^l(x)dx = \frac{h}{2k+1}\delta_{k,l},\quad \forall \kappa_j\in\Omega_h,\, 0\leq k,l\leq p,
\end{equation}

\noindent where $\delta_{k,l}$ denotes the Kronecker symbol, and from (\ref{eq:num_sol}) we obtain the following expression for the mean value of the numerical solution
\begin{equation}\label{eq:mean_sol}
 \langle u_h\rangle_j(t):=\frac{1}{h}\int_{\kappa_j} u_h(x,t)dx = U_j^0(t), \quad \forall \kappa_j\in\Omega_h,\, t\geq0.
\end{equation}

Likewise, the properties $L_k(\pm1)=(\pm1)^k$ induce the following expressions for the left and right traces of the numerical solution at interfaces $x_{j\pm1/2}$ of a given element:
\begin{subeqnarray}\label{eq:LR_traces}
 u_{j+1/2}^-(t) &:=& u_h(x_{j+1/2}^-,t) = \sum_{l=0}^p U_j^l(t), \quad \forall t\geq0,\\
 u_{j-1/2}^+(t) &:=& u_h(x_{j-1/2}^+,t) = \sum_{l=0}^p (-1)^lU_j^l(t), \quad \forall t\geq0,
\end{subeqnarray}

\noindent where $v^\pm$ denote the right and left traces of the quantity $v$ at a given position (see Figure \ref{fig:stencil_1D}). Finally, the derivatives of the Legendre polynomials may be defined from the recurrence relation \cite{abramowitz_stegun}
\begin{equation}\label{eq:Leg_rec}
 d_sL_{k}=d_sL_{k-2}+(2k-1)L_{k-1}(s), \quad k>1, \quad L_0(s)=1, \quad L_1(s)=s. 
\end{equation}

\begin{figure}
\begin{center}
\epsfig{figure=./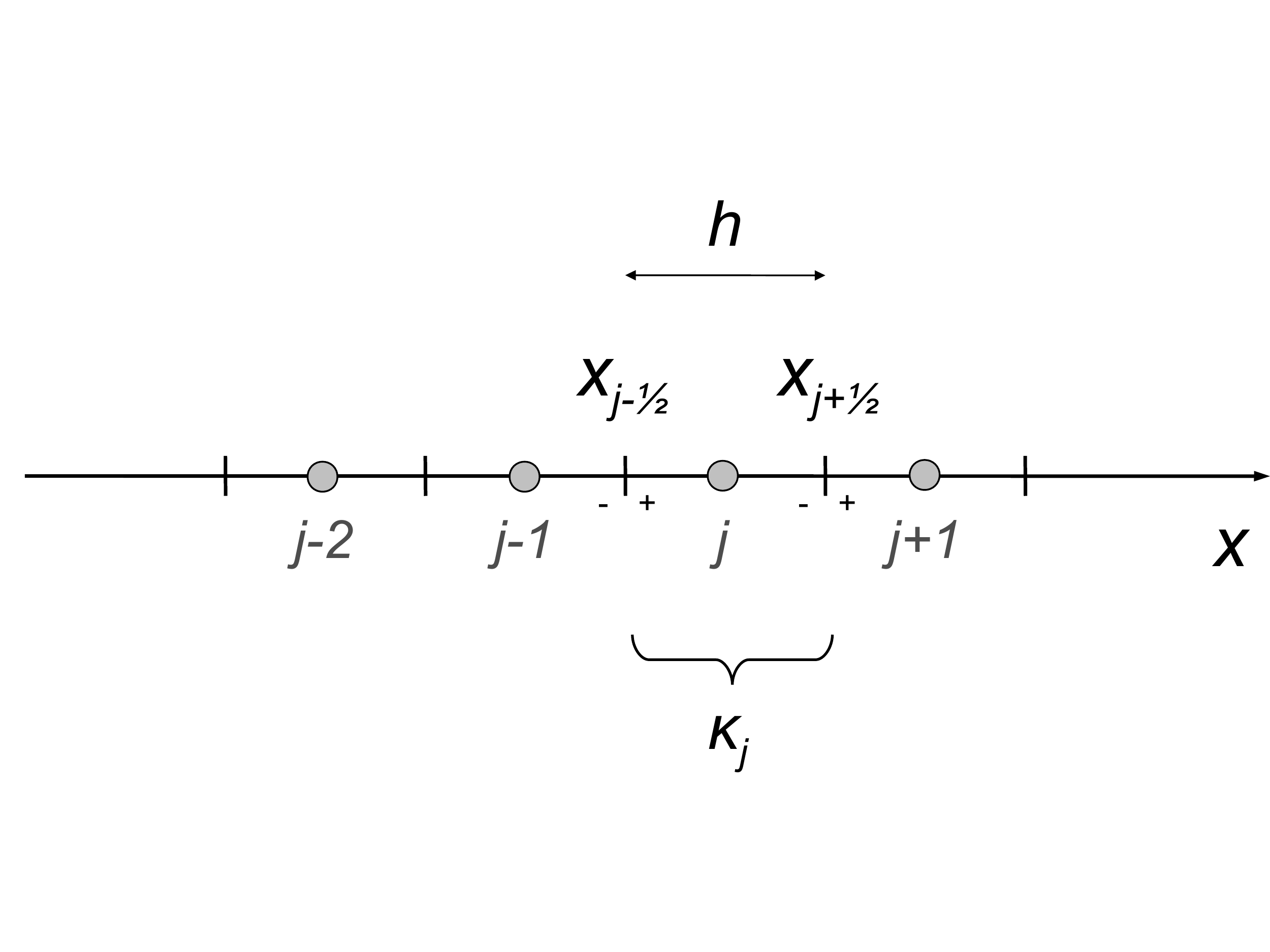,width=6cm,clip=true,trim=0cm 3.5cm 0cm 3.cm}
\caption{Mesh with definition of left and right traces at interfaces $x_{j\pm1/2}$.}
\label{fig:stencil_1D}
\end{center}
\end{figure}

Multiplying the above relation with $L_l(s)$, integrating over $[-1,1]$ and using orthogonality, one obtains the recursion
\begin{equation}\label{eq:matN_kl_rec}
 N_{k,l} = N_{k-2,l} + 2\delta_{k-1,l}, \quad N_{0,l}=0, \quad N_{1,l}=2\delta_{0,l}, \quad 1<k\leq p,\; 0\leq l\leq p,
\end{equation}

\noindent and hence $N_{k,l}=1-(-1)^{k+l}$ if $k>l\geq0$ and $N_{k,l}=0$ if $0\leq k\leq l$ for the entries of the matrix ${\bf N}$ defined by
\begin{equation}\label{eq:matN_kl}
 N_{k,l} = \int_{-1}^1L_l(s)d_sL_kds, \quad 0\leq k,l \leq p.
\end{equation}

Note that we know the explicit projection of the derivatives of the Legendre polynomials into the Legendre basis. Indeed, applying the recurrence relation (\ref{eq:Leg_rec}) successively one obtains
\begin{equation*}
 d_sL_{2k} = \sum_{l=1}^k(4l-1)L_{2l-1}(s), \quad d_sL_{2k+1} = \sum_{l=0}^k(4l+1)L_{2l}(s),
\end{equation*}
\noindent which may be rewritten under the general form
\begin{equation}\label{eq:matM_kl}
 d_sL_{k} = \sum_{l=0}^{k-1} M_{k,l}L_l(s),
\end{equation}
\noindent where the entries of the matrix ${\bf M}$ are defined by $M_{k,l}=(2l+1)\tfrac{1-(-1)^{k+l}}{2}$ for $k>l\geq0$ and $M_{k,l}=0$ for $0\leq k\leq l$.

\subsubsection{Space discretization} The semi-discrete form of the DG discretization in space of problem (\ref{eq:model_pb}) reads: find $u_h$ in ${\cal V}_h^p$ such that
\begin{equation}\label{eq:discr_var_form}
 \int_{\Omega_h} v_h \partial_tu_h dx - \sum_{\kappa_j\in\Omega_h}{\cal R}_j(u_h,v_h) = 0, \quad \forall v_h\in{\cal V}_h^p.
\end{equation}

\noindent together with the initial condition
\begin{equation}\label{eq:discr_CI}
 \int_{\Omega_h} v_h(x) u_h(x,0) dx = \int_{\Omega_h} v_h(x) u_0(x) dx, \quad \forall v_h\in{\cal V}_h^p.
\end{equation}

The discretization of the elementwise explicit residuals in (\ref{eq:discr_var_form}) reads
\begin{equation}\label{eq:local_residuals}
 {\cal R}_j(u_h,v_h) =  \int_{\kappa_j} f(u_h)\partial_xv_h dx - \oint_{\partial\kappa_j} \hat{h}(u_h^-,u_h^+)v_h dS,
\end{equation}

\noindent where $\hat{h}:\Omega_a\times\Omega_a\rightarrow\mathbb{R}$ denotes a numerical flux consistent with the physical flux: $\hat{h}(u,u)=f(u)$. We will consider the Godunov flux for the theoretical analysis of sections \ref{sec:steady_shock_sol_burgers} and \ref{sec:ana_stab_god}:
\begin{equation}\label{eq:godunov_flux}
 \hat{h}(u_h^-,u_h^+) = \left\{
\begin{array}{ll}
  \min\{f(v):\;v\in[u_h^-,u_h^+]\}, &\mbox{if } u_h^- \leq u_h^+,\\
  \max\{f(v):\,v\in[u_h^+,u_h^-]\}, &\mbox{if } u_h^- > u_h^+.
\end{array}
\right.
\end{equation}

The Godunov flux results from the solution of the Riemann problem for (\ref{eq:model_pb}) with piecewise constant initial data consisting in states $u_h^-$ and $u_h^+$ at the left and right of the interface. The values of (\ref{eq:godunov_flux}) in the set of states are depicted in Figure~\ref{fig:solution_PR_Godunov_burgers}(a). 

Comparisons will also be given in the numerical experiments for the local Lax-Friedrichs (LLF) flux:
\begin{equation}\label{eq:llf_flux}
 \hat{h}(u_h^-,u_h^+) = \frac{f(u_h^-)+f(u_h^+)}{2} + \frac{\alpha}{2}(u_h^--u_h^+),
\end{equation}
\noindent where the constant $\alpha$ is a stabilization parameter defined by $\alpha>\max\{|f'(v)|:\; \min(u_h^-,u_h^+)\leq v\leq\max(u_h^-,u_h^+)\}$, and the Engquist-Osher flux (see Figure~\ref{fig:solution_PR_Godunov_burgers}(b)):
\begin{equation}\label{eq:OSH_flux}
 \hat{h}(u_h^-,u_h^+) = \frac{f(u_h^-)+f(u_h^+)}{2} - \frac{1}{2}\int_{u_h^-}^{u_h^+}|f'(v)|dv,
\end{equation}

We recall that fluxes (\ref{eq:godunov_flux}) to (\ref{eq:OSH_flux}) are Lipschitz continuous and monotone 
\begin{equation}\label{eq:monotone_flux}
 \partial_{u^-}\hat{h}(a,b)\geq0, \quad \partial_{u^+}\hat{h}(a,b)\leq0, \quad \forall a,b \in \Omega_a,
\end{equation}

\noindent where $\partial_{u^-}\hat{h}$ and $\partial_{u^+}\hat{h}$ denote the partial derivatives of $\hat{h}$ with respect to its first and second arguments. Numerical experiments tend to show that the effect of the flux on the quality of the approximation decreases as the polynomial degree $p$ increases \cite{cockburn-shu01,qui_et-al06}. The analysis in section \ref{sec:steady_shock_sol_conv} and numerical experiments in section \ref{sec:num_xp} will support this trend and confirm the relevance of the theoretical analysis with the Godunov flux in sections~\ref{sec:steady_shock_sol_burgers} and \ref{sec:ana_stab}.

\begin{figure}
\begin{center}
\subfigure[]{\epsfig{figure=./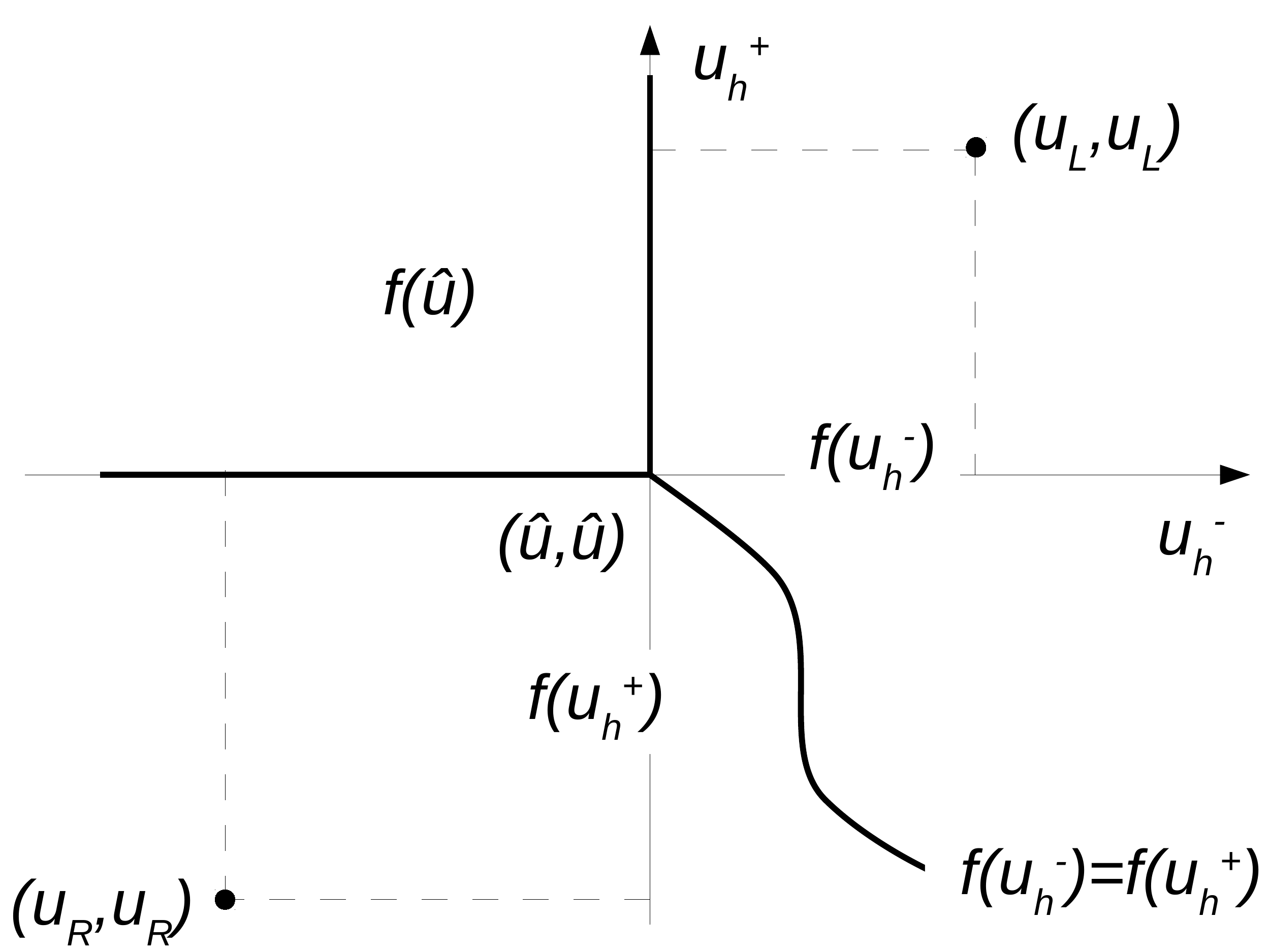,width=6cm}}
\subfigure[]{\epsfig{figure=./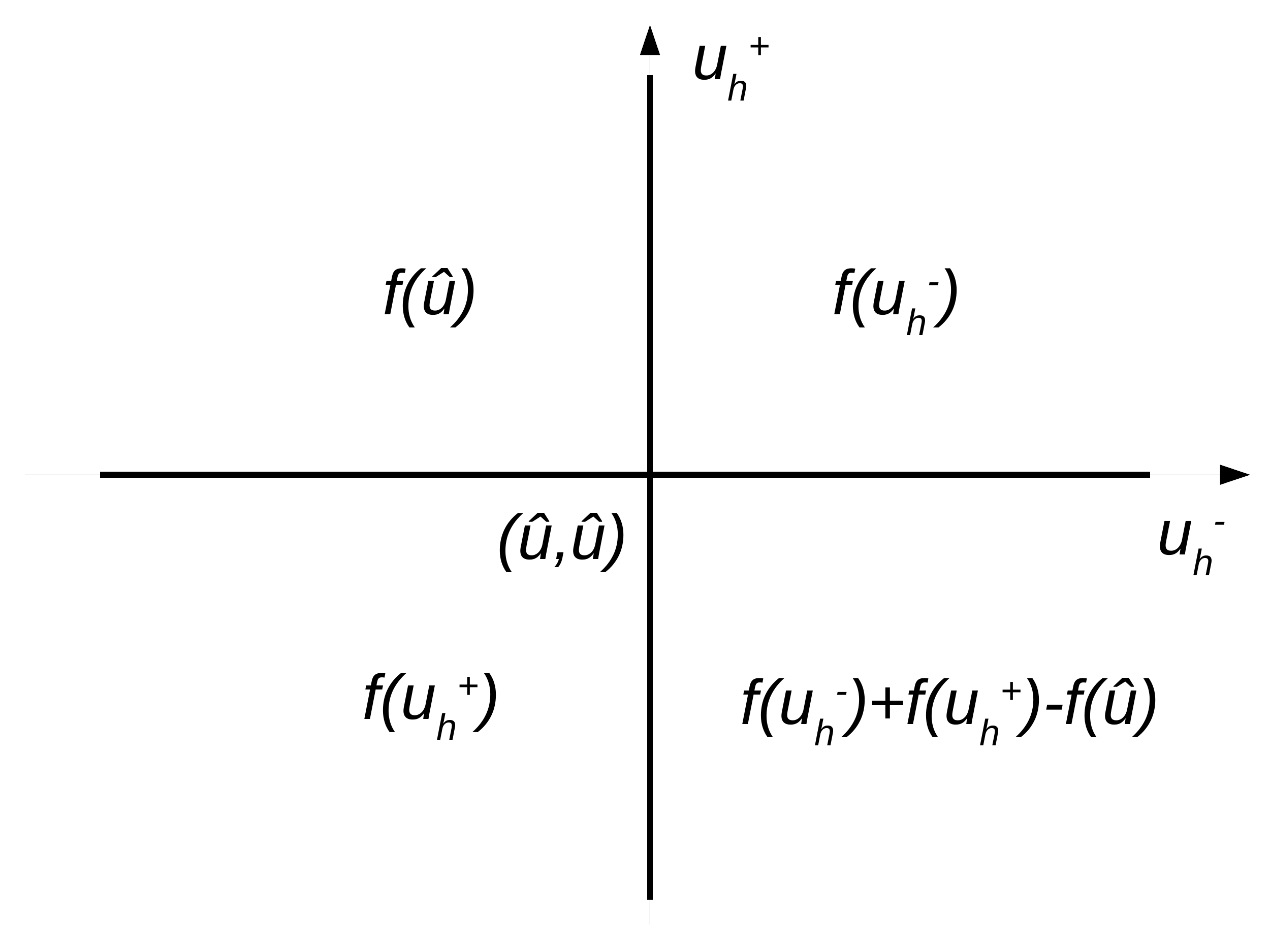,width=6cm}}
\caption{Values of (a) the Godunov flux (\ref{eq:godunov_flux}) and (b) the Engquist-Osher flux (\ref{eq:OSH_flux}) in the set of states for a convex scalar physical flux.}
\label{fig:solution_PR_Godunov_burgers}
\end{center}
\end{figure}

\subsection{Steady-state scheme}

Looking for steady-state solutions
\begin{equation}\label{eq:_steady_num_sol}
 u_h(x)=\sum_{l=0}^{p}\phi_j^l(x)U_j^{l}, \quad \forall x\in\kappa_j,\, \kappa_j\in\Omega_h,
\end{equation}

\noindent of the numerical scheme (\ref{eq:discr_var_form}) and projecting the discrete scheme into the function basis, one obtains for all $\kappa_j$ in $\Omega_h$ and $0\leq k\leq p$:
\begin{eqnarray}\label{eq:steady_discrete_scheme_def}
 {\cal R}_j^k(u_h) &:=& {\cal R}_j(u_h,\phi_j^k) \nonumber\\
 &=& \int_{\kappa_j} f(u_h)d_x\phi_j^k dx - \hat{h}\big(u_{j+\frac{1}{2}}^-,u_{j+\frac{1}{2}}^+\big) + (-1)^k\hat{h}\big(u_{j-\frac{1}{2}}^-,u_{j-\frac{1}{2}}^+\big).
\end{eqnarray}

The local conservation property
\begin{equation}\label{eq:steady_local_cons}
\hat{h}\big(u_{j+\frac{1}{2}}^-,u_{j+\frac{1}{2}}^+\big) = \hat{h}\big(u_{j-\frac{1}{2}}^-,u_{j-\frac{1}{2}}^+\big), \quad \forall\kappa_j\in\Omega_h,
\end{equation}

\noindent is obtained from ${\cal R}_j(u_h,1_{\kappa_j})=0$ where $1_{\kappa_j}$ denotes the indicator function of $\kappa_j$. We are interested in discrete solutions of the numerical scheme such that
\begin{equation}\label{eq:discrete_shock_at_infty}
 \lim_{x\rightarrow-\infty} u_h(x) = u_L,\quad  \lim_{x\rightarrow\infty} u_h(x) = u_R.
\end{equation}

Therefore, we obtain $\lim_{x\rightarrow \pm\infty}\hat{h}(u_h^-,u_h^+)=f_\infty$ and the elementwise residuals in cells $\kappa_j$ in $\Omega_h$ may be rewritten under the more convenient forms
\begin{subeqnarray}\label{eq:steady_discrete_scheme}
 {\cal R}_j^0(u_h) &=& \hat{h}\big(u_{j+\frac{1}{2}}^-,u_{j+\frac{1}{2}}^+\big) - f_\infty \\
                   &=& \hat{h}\big(u_{j-\frac{1}{2}}^-,u_{j-\frac{1}{2}}^+\big) - f_\infty = 0, \\
 {\cal R}_j^k(u_h) &=& \int_{\kappa_j} f(u_h)d_x\phi_j^k dx - \big(1-(-1)^k\big)f_\infty = 0,\; 1\leq k\leq p.
\end{subeqnarray}

%
%
\section{Steady discrete shock solutions for general convex fluxes}\label{sec:steady_shock_sol_conv}

In this section, we present some preliminary results obtained for a convex flux $f$ that will be used in the next sections. We assume that the exact shock position $x_c$ is known and we set $j_c=0$ the cell index containing the shock without loss of generality. Cells $j<0$ and $j>0$ will be referred as to supersonic and subsonic cells, respectively. Our objective is here to determine the structure of the numerical solution in cells upstream and downstream of the shock position and to provide information about the solution in the shock cell. 

The analysis will mainly focus on the use of the Godunov flux (\ref{eq:godunov_flux}). For a steady-state solution to hold, the conservation property $\hat{h}(u_h^-,u_h^+)=f_\infty$, with a coercive and convex flux, has one of the following two solutions
\begin{subeqnarray}\label{eq:sol_godunov_flux}
 u_{h}^-=u_L &\mbox{and}& u_{h}^+\geq u_R, \\
 u_{h}^-\leq u_L &\mbox{and}& u_{h}^+= u_R.
\end{subeqnarray}

Indeed, equation $f(u)=f_\infty$ has two roots $u=u_L$ or $u=u_R$ (see section~\ref{sec:model_eqn}). Hence, to solve $\hat{h}(u_h^-,u_h^+)=f_\infty$, it is suficient to consider the solutions of Riemann problems with initial conditions either $u_h^-$ such that $f(u_h^-)=f_\infty$ and $u_h^+$ in $\Omega_a$, or $u_h^-$ in $\Omega_a$ and $u_h^+$ such that $f(u_h^+)=f_\infty$. Consider the first initial condition, then $u_h^-=u_L$ or  $u_h^-=u_R$. Since $\hat{h}$ is continuous, we now consider successive situations when $u_h^+$ varies. Consider first $u_h^-=u_L$. The following situations (i) $u_L\leq u_h^+$, (ii) $u_R\leq u_h^+< u_L$, and (iii) $u_h^+<u_R$ lead to the following expressions of the Godunov flux (\ref{eq:godunov_flux}): (i) $\hat{h}(u_L,u_h^+)=f(u_L)$, (ii) $\hat{h}(u_L,u_h^+)=f(u_L)$, and (iii) $\hat{h}(u_L,u_h^+)=f(u_h^+)>f_\infty$. The other situation, $u_h^-=u_R$, leads to $\hat{h}(u_R,u_h^+)=\min\{f(\hat{u}),f(u_h^+)\}<f_\infty$ when $u_R<u_h^+$, where $\hat{u}$ is defined by (\ref{eq:Lax_cond_hat-u}), and $\hat{h}(u_R,u_h^+)=f(u_h^+)>f_\infty$ when $u_h^+<u_R$. We therefore obtain (\ref{eq:sol_godunov_flux}a) and a similar step applies to infer (\ref{eq:sol_godunov_flux}b).

In the following, we shall assume a stronger condition on the traces of the numerical solution at left and right faces of the shock cell:
\begin{subeqnarray}\label{eq:sign_condition_conv-flx}
 u_{-1/2}^-=u_L &\mbox{and}& u_{-1/2}^+ > u_R, \\
 u_{1/2}^- < u_L &\mbox{and}& u_{1/2}^+= u_R.
\end{subeqnarray}

Indeed, the Godunov flux is no longer differentiable at points $(u_h^-,u_h^+)$ with $u_h^-\geq\hat{u}$ such that $f(u_h^-)=f(u_h^+)$ (see Figure~\ref{fig:solution_PR_Godunov_burgers}(a)) and the strict inequalities in (\ref{eq:sign_condition_conv-flx}) ensure its differentiability in our analysis. We will give some comments on the implications of assumption (\ref{eq:sign_condition_conv-flx}) in section~\ref{sec:comments_trace_ineq}. We stress that the present results will show that the above assumption is not a restriction on the validity of our analysis.

As a consequence of (\ref{eq:sign_condition_conv-flx}), the Godunov flux (\ref{eq:godunov_flux}) reduces to a fully upwind flux at interfaces $x_{\pm1/2}$:
\begin{subeqnarray}\label{eq:godunov-flx_interf_jc}
 \hat{h}(u_{-1/2}^-,u_{-1/2}^+) &=& f(u_{-1/2}^-), \\
 \hat{h}(u_{1/2}^-,u_{1/2}^+) &=& f(u_{1/2}^+).
\end{subeqnarray}

From (\ref{eq:godunov-flx_interf_jc}a), the residuals in cell $\kappa_{-1}$ no longer depend on DOFs in cell $\kappa_0$. Hence, the DOFs in cells $j<0$ now become independent of DOFs in cells $j\geq0$. Likewise, (\ref{eq:godunov-flx_interf_jc}b) implies that DOFs in cells $j>0$ are independent of DOFs in cells $j\leq0$. This is a key result for the theoretical analysis below and follows from the choice of the Godunov flux.  


%
\subsection{Numerical solution in the subsonic and supersonic regions}\label{sec:steady_shock_sol_sup-sub}


The first result concerns existence of discrete shock profiles in cells $j<0$ in the supersonic region when using the Godunov numerical flux. This result is local in the sense that it is based on the inverse function theorem in each cell and is valid in a neighborhood of the exact solution (\ref{eq:exact_sol}). The same analysis is then applied in the subsonic region. In the case of more general monotone numerical fluxes, we also give conditions that allow an exponential decay of perturbations away from the shock position. 

\begin{theorem}[Solution in the supersonic region]
\label{th:sol_super_zone_conv_flx}
Assume that the shock is strictly contained in cell $j_c=0$. Let $p\geq0$, $h>0$ and assume that the discrete solution satisfies (\ref{eq:sign_condition_conv-flx}a). Then in each cell $j<0$, the constant discrete function in ${\cal P}_p(\kappa_j)$ defined by $w_h:\kappa_j\ni x\mapsto w_h(x)=u_L$ is the unique solution to the numerical scheme (\ref{eq:steady_discrete_scheme}) over a sufficiently small neighborhood ${\cal U}_j\subset\mathbb{R}^{p+1}$.
\end{theorem}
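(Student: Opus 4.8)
The plan is to reduce the problem to a fixed-point/inverse-function argument localized to a single supersonic cell, exploiting the decoupling established just before the theorem. Because assumption (\ref{eq:sign_condition_conv-flx}a) forces the Godunov flux at $x_{-1/2}$ to become the fully upwind flux (\ref{eq:godunov-flx_interf_jc}a), the DOFs in cells $j<0$ decouple from those in cells $j\geq 0$. So I would first argue that it suffices to solve the steady system (\ref{eq:steady_discrete_scheme}) cell-by-cell, marching from $x\to-\infty$ inward, where in each cell $\kappa_j$ ($j<0$) the right-interface flux $\hat h(u_{j+1/2}^-,u_{j+1/2}^+)$ is determined by the already-known solution in $\kappa_{j+1}$ via local conservation (\ref{eq:steady_local_cons}). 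The boundary condition (\ref{eq:discrete_shock_at_infty}) gives $u_h\to u_L$, and I want to show the constant state $w_h\equiv u_L$ is the \emph{unique} local solution in each such cell.

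\textbf{Key steps.} First I would verify that $w_h\equiv u_L$ is indeed a solution: with a constant state the volume term $\int_{\kappa_j} f(u_h)\,d_x\phi_j^k\,dx = f(u_L)\int_{\kappa_j} d_x\phi_j^k\,dx$, and since $\int_{\kappa_j} d_x\phi_j^k\,dx = \phi_j^k(x_{j+1/2})-\phi_j^k(x_{j-1/2}) = L_k(1)-L_k(-1) = 1-(-1)^k$, the higher residuals ${\cal R}_j^k = (1-(-1)^k)f(u_L) - (1-(-1)^k)f_\infty = 0$ by Rankine--Hugoniot $f(u_L)=f_\infty$; and ${\cal R}_j^0 = f(u_L)-f_\infty = 0$ for the same reason. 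Next, for uniqueness I would invoke the inverse function theorem at this root. The map is $\Phi_j:(U_j^0,\dots,U_j^p)\mapsto({\cal R}_j^0,\dots,{\cal R}_j^p)\in\mathbb{R}^{p+1}$, and I need its Jacobian at the constant state $w_h$ to be nonsingular. Differentiating (\ref{eq:steady_discrete_scheme}), the $(k,l)$ entry splits into a volume contribution $\int_{\kappa_j} f'(u_h)\,\phi_j^l\,d_x\phi_j^k\,dx$ plus an interface contribution from $\partial_{u^-}\hat h$ at $x_{j+1/2}$ (the flux there being upwind and hence equal to $f'(u_L)>0$ by the supersonic Lax condition (\ref{eq:Lax_cond})). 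Evaluated at the constant state, using the change of variables to $s\in[-1,1]$ and the matrix ${\bf N}$ of (\ref{eq:matN_kl}) together with the trace identities (\ref{eq:LR_traces}), the Jacobian becomes $f'(u_L)$ times a fixed, $p$-dependent matrix built from $\tfrac12{\bf N}$ and the rank-one interface term $\phi_j^k(x_{j+1/2})\phi_j^l(x_{j+1/2})=1$.

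\textbf{The main obstacle.} The crux is showing this Jacobian matrix is nonsingular for every $p\geq 0$. I expect the cleanest route is to recognize that the steady scheme in a supersonic cell, with the upwind flux, is exactly a stable upwind DG discretization of the linear transport operator $f'(u_L)\,\partial_x$ with positive speed $f'(u_L)>0$; the corresponding elemental matrix is the standard upwind-DG transport matrix, whose invertibility is classical (it is the discrete analogue of a well-posed upwind scheme, and one can check positive-definiteness of its symmetric part, $\tfrac12({\bf N}+{\bf N}^\top)$ plus the boundary term, yields a lower-triangular-plus-boundary structure with nonzero diagonal). Concretely, I would show the matrix is nonsingular by exhibiting that its determinant is a nonzero multiple of $f'(u_L)^{p+1}$, e.g.\ by an energy estimate: contracting the linearized residual with the coefficient vector and integrating by parts gives a boundary quadratic form $\tfrac12 f'(u_L)\big((u^-_{j+1/2})^2 + (u^+_{j-1/2})^2\big)$-type expression that is positive unless the perturbation vanishes. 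Since $f'(u_L)>0$ is guaranteed by (\ref{eq:Lax_cond}), the Jacobian is invertible, the inverse function theorem applies, and $w_h\equiv u_L$ is the unique solution in a sufficiently small neighborhood ${\cal U}_j$. Finally, an induction on $j$ from $-\infty$ propagates this uniqueness through all supersonic cells, completing the proof.
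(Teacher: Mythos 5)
Your proposal follows essentially the same skeleton as the paper's proof: verify that $w_h\equiv u_L$ solves (\ref{eq:steady_discrete_scheme}) via $f(u_L)=f_\infty$, use assumption (\ref{eq:sign_condition_conv-flx}a) together with $u_L>\hat u$ to reduce the Godunov flux to the upwind flux so that each supersonic cell yields a closed $(p+1)$-dimensional system in its own DOFs, and apply the inverse function theorem at ${\bf U}_L=(u_L,0,\dots,0)^\top$. The one genuine difference is how you establish invertibility of the Jacobian. The paper computes it explicitly: the row for the mean equation is $f'(u_L)(1,\dots,1)$ (since $\partial_{U_j^l}u_{j+1/2}^-=1$ for every $l$), the rows $k\geq1$ are $f'(u_L)N_{k,l}$ with $N_{k,l}$ from (\ref{eq:matN_kl}), and the determinant evaluates to $(-2)^p(f'(u_L))^{p+1}\neq0$. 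You instead invoke upwind-DG coercivity; that route is viable and arguably more structural (it explains why invertibility holds for every $p$ without a determinant identity), but as written it has a gap.

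The gap is in the sentence claiming the boundary quadratic form ``is positive unless the perturbation vanishes.'' Testing the linearized residual against the perturbation $w$ itself gives $B(w,w)=-\tfrac{f'(u_L)}{2}\big((w_{j+1/2}^-)^2+(w_{j-1/2}^+)^2\big)$, and its vanishing forces only the two \emph{traces} of $w$ to vanish, not $w$ itself: for $p\geq2$ a nonzero polynomial can vanish at both cell endpoints. The fix is short and should be stated: once the traces vanish, the kernel equations reduce to $\int_{\kappa_j}w\,d_x\phi_j^k\,dx=0$ for all $0\leq k\leq p$; integrating by parts (the boundary terms now vanish) gives $\int_{\kappa_j}\phi_j^k\,d_xw\,dx=0$ for all $k$, and since $d_xw\in{\cal P}_{p-1}(\kappa_j)$ this forces $d_xw\equiv0$, hence $w$ is constant and, having zero traces, $w\equiv0$. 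With this bootstrap the kernel is trivial and your inverse-function-theorem conclusion stands. One further minor slip: with the upwind flux, $\hat h_{j+1/2}=f(u_{j+1/2}^-)$ depends on cell $j$'s own DOFs, not on the solution in $\kappa_{j+1}$; the common flux value $f_\infty$ is pinned by (\ref{eq:discrete_shock_at_infty}) (equivalently by (\ref{eq:sign_condition_conv-flx}a) at $x_{-1/2}$) and propagated through (\ref{eq:steady_local_cons}), after which the per-cell systems decouple completely, so no marching or induction over $j$ is actually needed.
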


\begin{proof}
We first note that, by consistency, $w_h$ trivially satisfies the numerical scheme (\ref{eq:steady_discrete_scheme}): ${\cal R}_j^k(w_h)=0$, for all $0\leq k\leq p$ and $j<0$. In other words, $w_h$ is a stationary solution in the supersonic region. Let us prove that this is the unique solution in a sufficiently small neighborhood. Since $u_L>\hat{u}$ (see Figure~\ref{fig:solution_PR_Godunov_burgers}a), for each interface $j+\tfrac{1}{2}<-\tfrac{1}{2}$, there exist neighborhoods ${\cal V}_j\times{\cal V}_{j+1}\subset\mathbb{R}^{p+1}\times\mathbb{R}^{p+1}$ of the solution $w_h$ over ${\kappa_j\cup\kappa_{j+1}}$ where the numerical flux in (\ref{eq:steady_discrete_scheme}a) reduces to the upwind flux:
\begin{equation}\label{eq:proof_sup1}
 {\cal R}_j^0(u_h) = f(u_{j+\frac{1}{2}}^-) - f_\infty = 0.
\end{equation}

By assumption (\ref{eq:sign_condition_conv-flx}a), (\ref{eq:proof_sup1}) holds also at interface $x_{-1/2}$. Now let $j<0$, define ${\bf U}_j=(U_j^l)_{0\leq l\leq p}^\top$ and let ${\bf G}_{j+1/2}:\mathbb{R}^{p+1}\rightarrow\mathbb{R}^{p+1}$ be the application whose components are the elementwise residuals in (\ref{eq:proof_sup1}) and (\ref{eq:steady_discrete_scheme}c): 
\begin{equation}\label{eq:matG}
 G_{j+1/2}^k({\bf U}_j):={\cal R}_j^k(u_h)=0, \quad 0\leq k\leq p. 
\end{equation}

Setting ${\bf U}_L=(u_L,0,\dots,0)^\top$ in $\mathbb{R}^{p+1}$, we have ${\bf G}_{j+1/2}({\bf U}_L)=0$ by consistency and it is sufficient to prove that the mapping ${\bf G}_{j+1/2}$ is invertible over a neighborhood ${\cal W}_j\subset\mathbb{R}^{p+1}$ of ${\bf U}_L$. The partial derivatives of ${\bf G}_{j+1/2}$ take values

\begin{subeqnarray*}
 \partial_{U_j^l} G_{j+1/2}^0 &=& f'(u_{j+1/2}^-) = f'(u_L), \quad 0\leq l\leq p,\\
 \partial_{U_j^l} G_{j+1/2}^k &=&\int_{\kappa_j}f'(u_h)\phi_j^l d_x\phi_j^k dx = f'(u_L)N_{k,l}, \quad 0<k\leq p,\;0\leq l\leq p,
\end{subeqnarray*}


\noindent at the point ${\bf U}_L$, where the coefficients $N_{k,l}$ are defined by (\ref{eq:matN_kl}). The application ${\bf G}_{j+1/2}$ is continuously differentiable over $\mathbb{R}^{p+1}$ and its Jacobian $|\nabla_{{\bf U}_j}{\bf G}_{j+1/2}|$ has the following form at the point ${\bf U}_L$:
\begin{equation*}
\left|
 \begin{array}{ccccc}
  f'(u_L) & f'(u_L) & f'(u_L) & \dots & f'(u_L) \\
  2f'(u_L) & 0 & 0 & \dots & 0 \\
  0 & 2f'(u_L) & 0 & \dots & 0 \\
  2f'(u_L) & 0 & 2f'(u_L) & \dots & 0 \\
  \vdots & \vdots & \vdots & \ddots & \vdots \\
  (1-(-1)^p)f'(u_L) & (1+(-1)^p)f'(u_L) & (1-(-1)^p)f'(u_L) & \dots & 0
 \end{array}
 \right|
\end{equation*}

\noindent and reduces to $|\nabla_{{\bf U}_j}{\bf G}|_{u_L}=(-2)^p(f'(u_L))^{p+1}\neq0$ because $f'(u_L)>0$. Applying the inverse function theorem, there exists a neighborhood ${\cal W}_j\subset\mathbb{R}^{p+1}$ of ${\bf U}_L$ where the mapping ${\bf G}_{j+1/2}$ is invertible. We conclude with ${\cal U}_j={\cal V}_j\cap{\cal W}_j$
\end{proof}

A similar result holds for the subsonic region that we formulate below and whose proof uses the same method as for Theorem \ref{th:sol_super_zone_conv_flx}.

\begin{theorem}[Solution in the subsonic region]
\label{th:sol_sub_zone_conv_flx}
Assume that the shock is strictly contained in cell $j_c=0$. Let $p\geq0$, $h>0$ and assume that the discrete solution satisfies (\ref{eq:sign_condition_conv-flx}b). Then in each cell $j>0$, the constant discrete function in ${\cal P}_p(\kappa_j)$ defined by $w_h:\kappa_j\ni x\mapsto w_h(x)=u_R$ is the unique solution to the numerical scheme (\ref{eq:steady_discrete_scheme}) over a sufficiently small neighborhood ${\cal U}_j\subset\mathbb{R}^{p+1}$.
\end{theorem}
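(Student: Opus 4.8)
The plan is to mirror the proof of Theorem \ref{th:sol_super_zone_conv_flx} exactly, exploiting the symmetry between the supersonic and subsonic regions that is induced by assumption (\ref{eq:sign_condition_conv-flx}b). First I would note that, by consistency of the numerical flux, the constant function $w_h \equiv u_R$ trivially satisfies the steady scheme (\ref{eq:steady_discrete_scheme}): indeed ${\cal R}_j^0(w_h)=f(u_R)-f_\infty=0$ by the Rankine--Hugoniot relation (\ref{eq:RH_cond}), and for $1\leq k\leq p$ the integral $\int_{\kappa_j}f(u_R)d_x\phi_j^k\,dx$ reduces to $f(u_R)$ times the jump of $\phi_j^k$ across $\kappa_j$, which equals $(1-(-1)^k)f_\infty$, so ${\cal R}_j^k(w_h)=0$. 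Thus $w_h$ is a stationary solution in the subsonic region; it remains to establish local uniqueness.

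For uniqueness, the key observation is that the flux now upwinds to the right. Since $u_R<\hat{u}$ (from (\ref{eq:Lax_cond_hat-u})), for each interface $j-\tfrac{1}{2}>\tfrac{1}{2}$ there exist neighborhoods of $w_h$ on $\kappa_{j-1}\cup\kappa_j$ where the Godunov flux in (\ref{eq:steady_discrete_scheme}a) reduces to the downwind value $\hat{h}(u_{j-1/2}^-,u_{j-1/2}^+)=f(u_{j-1/2}^+)$; by assumption (\ref{eq:sign_condition_conv-flx}b) this reduction also holds at the interface $x_{1/2}$. Consequently the residuals in cell $\kappa_j$ depend only on ${\bf U}_j$, and I would define an application ${\bf G}_{j-1/2}:\mathbb{R}^{p+1}\rightarrow\mathbb{R}^{p+1}$ whose components are ${\cal R}_j^0(u_h)=f(u_{j-1/2}^+)-f_\infty$ together with the interior residuals (\ref{eq:steady_discrete_scheme}c), and set ${\bf U}_R=(u_R,0,\dots,0)^\top$, so that ${\bf G}_{j-1/2}({\bf U}_R)=0$.

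The proof then reduces to showing that the Jacobian of ${\bf G}_{j-1/2}$ at ${\bf U}_R$ is nonsingular, so that the inverse function theorem yields a neighborhood of uniqueness. Here the one genuine difference from the supersonic case appears: the row coming from ${\cal R}_j^0$ now uses the right trace (\ref{eq:LR_traces}b), so its partial derivatives are $\partial_{U_j^l}G^0_{j-1/2}=f'(u_R)(-1)^l$ rather than the constant $f'(u_L)$; the remaining rows again contribute $f'(u_R)N_{k,l}$. I would verify that the resulting determinant equals $2^p(f'(u_R))^{p+1}$ up to sign, which is nonzero precisely because $f'(u_R)\neq0$ by the entropy condition (\ref{eq:Lax_cond}). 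Concretely one can reduce to the supersonic computation by the change of variables $s\mapsto -s$ in the reference element, which sends $L_l$ to $(-1)^lL_l$ and thereby interchanges the roles of the left and right traces while mapping ${\bf N}$ to its mirror image; this makes the determinant evaluation identical in structure to that already carried out. The main obstacle is this sign bookkeeping in the first row of the Jacobian: one must confirm that the alternating factors $(-1)^l$ do not cause cancellations that annihilate the determinant, and the cleanest route is the reflection argument, after which the nonvanishing follows from $f'(u_R)<0$ exactly as $f'(u_L)>0$ was used before. Concluding, I would take ${\cal U}_j$ to be the intersection of the neighborhood on which the flux upwinds with the neighborhood provided by the inverse function theorem.
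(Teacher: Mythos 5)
Your proposal is correct and follows essentially the same route as the paper, which states that Theorem~\ref{th:sol_sub_zone_conv_flx} is proved ``by the same method'' as Theorem~\ref{th:sol_super_zone_conv_flx}: use form (\ref{eq:steady_discrete_scheme}b) of the $k=0$ residual so that, with $u_R<\hat{u}$ and assumption (\ref{eq:sign_condition_conv-flx}b), the flux reduces to $f(u_{j-1/2}^+)$ and the map ${\bf G}_{j-1/2}$ depends on ${\bf U}_j$ alone, then apply the inverse function theorem at ${\bf U}_R$. Your sign bookkeeping is right --- the first Jacobian row is $f'(u_R)(-1)^l$, and since the nonzero entries $N_{k,l}$ occur only for $k+l$ odd, the reflection $s\mapsto-s$ amounts to consistent row/column sign changes, giving $\pm2^p(f'(u_R))^{p+1}\neq0$ as you claim (easily checked for small $p$, e.g. $\det\begin{pmatrix}1&-1\\2&0\end{pmatrix}=2$).
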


In the case of a general numerical flux, the upwind property (\ref{eq:godunov-flx_interf_jc}) is no longer valid with the consequence that the DOFs in supersonic and subsonic regions are no longer uncoupled from other DOFs and oscillations may appear in cells close to the shock position. However, we now show that, under some assumptions on the numerical flux, these oscillations decay exponentially fast from the shock. This trend will be illustrated in the numerical experiments of section \ref{sec:num_xp} with the LLF flux and supports the relevance of the choice of the Godunov flux for the theoretical analysis.

\begin{theorem}
\label{th:exp_decay}
 Suppose $\hat{h}:\Omega_a\times\Omega_a\rightarrow\mathbb{R}$ is a monotone (\ref{eq:monotone_flux}), Lipschitz continuous numerical flux consistent with the physical flux $f(u)$ and satisfies
\begin{equation}\label{eq:hyp_exp_decay}
 0<-\frac{\partial_{u^+}\hat{h}(u_L,u_L)}{\partial_{u^-}\hat{h}(u_L,u_L)}<1, \quad 0<-\frac{\partial_{u^-}\hat{h}(u_R,u_R)}{\partial_{u^+}\hat{h}(u_R,u_R)}<1.
\end{equation}

Assume that there exist $J^-<0$ and $J^+>0$ such that the discrete solution satisfies $u_h(x)=u_X+\sum_{l=0}^p\varsigma_{j}^l\phi_{j}^l(x)$ in cells $j\in\{J^-,J^+\}$ with $\sum_{l=0}^p|\varsigma_{j}^l|\leq\epsilon|u_X|$, $\epsilon\ll1$ and $u_X=u_L$ if $j<0$ or $u_X=u_R$ if $j>0$. Then, the amplitude of oscillations of the numerical solution around the exact solution decays exponentially fast as $|j|\rightarrow\infty$.
\end{theorem}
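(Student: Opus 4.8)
# Proof Proposal for Theorem 3 (Exponential Decay)

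The plan is to linearize the steady scheme about the constant state $u_X$ in the far region and show that the resulting recurrence between adjacent cells is governed by a contracting map. Since the perturbations $\varsigma_j^l$ are small ($\sum_l|\varsigma_j^l|\leq\epsilon|u_X|$), I would write $u_h = u_X + \sum_l \varsigma_j^l \phi_j^l$ and expand the residual equations (\ref{eq:steady_discrete_scheme}) to first order in the $\varsigma_j^l$. The flux difference $f(u_h)$ inside the volume term linearizes as $f(u_X) + f'(u_X)\sum_l\varsigma_j^l\phi_j^l + O(\epsilon^2)$, and the numerical flux $\hat h(u^-,u^+)$ linearizes using $\partial_{u^-}\hat h$ and $\partial_{u^+}\hat h$ evaluated at $(u_X,u_X)$. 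The key structural fact to exploit is that the trace $u_{j+1/2}^-$ depends only on the DOFs in cell $\kappa_j$ while $u_{j+1/2}^+$ depends only on those in cell $\kappa_{j+1}$, so each interface flux couples exactly two neighboring cells. This yields, for each $j$ in the far region, a linear system of the form $A\,\boldsymbol{\varsigma}_j = B\,\boldsymbol{\varsigma}_{j+1} + C\,\boldsymbol{\varsigma}_{j-1}$ relating the DOF vectors of three consecutive cells, where $A$, $B$, $C$ are $(p+1)\times(p+1)$ matrices with entries built from $N_{k,l}$, the trace coefficients $(\pm1)^l$ from (\ref{eq:LR_traces}), and the flux derivatives.

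Next I would reduce this three-term vector recurrence to a first-order recurrence for the stacked vector $(\boldsymbol{\varsigma}_j,\boldsymbol{\varsigma}_{j-1})^\top$ by forming the associated transfer (companion) matrix $T$, and then reduce to the study of its spectral radius. Exponential decay as $|j|\to\infty$ is equivalent to showing that the stable eigenspace of $T$ has enough dimension to accommodate the prescribed boundary data at $J^\pm$, i.e.\ that $T$ admits $p+1$ eigenvalues strictly inside the unit disk for the subsonic tail and $p+1$ outside for the supersonic tail (so that decaying-toward-infinity modes are selected). The crucial scalar quantity controlling decay is the mean-value ($k=0$) equation, whose linearization reads $\partial_{u^-}\hat h(u_X,u_X)\sum_l\varsigma_j^l + \partial_{u^+}\hat h(u_X,u_X)\sum_l(-1)^l\varsigma_{j+1}^l = 0$ after using local conservation (\ref{eq:steady_local_cons}); this directly produces the ratio $-\partial_{u^+}\hat h/\partial_{u^-}\hat h$ appearing in hypothesis (\ref{eq:hyp_exp_decay}). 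The monotonicity (\ref{eq:monotone_flux}) guarantees the signs of these derivatives, and the strict bound $<1$ in (\ref{eq:hyp_exp_decay}) is precisely what forces a geometric contraction factor $r<1$ on the aggregated mode from cell to cell.

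The main obstacle I anticipate is controlling the \emph{higher} modes $k\geq1$, not just the mean-value mode. The $k=0$ equation alone gives contraction only for the summed trace $\sum_l(\pm1)^l\varsigma_j^l$, but one must show that all $p+1$ components of $\boldsymbol{\varsigma}_j$ decay. The higher-order residual equations (\ref{eq:steady_discrete_scheme}c) are, to leading order, \emph{local} (they involve the matrix $f'(u_X)\mathbf{N}$ acting within cell $\kappa_j$, with coupling to the neighbor entering only through the boundary flux terms). I would therefore argue that, once the trace quantities are contracting, the invertibility of the block $f'(u_X)\mathbf{N}$-type matrix — established exactly as the nonzero Jacobian $(-2)^p(f'(u_X))^{p+1}$ computed in the proof of Theorem~\ref{th:sol_super_zone_conv_flx} — lets one solve for the interior modes in each cell and propagate the geometric bound to the full vector $\boldsymbol{\varsigma}_j$. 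Concretely, I expect to prove by induction on $|j|$ that $\sum_l|\varsigma_j^l|\leq C\,r^{|j|-|J^\pm|}\epsilon|u_X|$ with $r$ the contraction factor from (\ref{eq:hyp_exp_decay}), where the inductive step bounds the new cell's DOFs via the inverted local Jacobian applied to the (already-contracting) neighboring trace data. The delicate point is ensuring that the $O(\epsilon^2)$ remainders from the nonlinear flux do not destroy the linear contraction; this is handled by choosing $\epsilon$ small enough that the perturbed transfer map remains a contraction on the relevant invariant cone, a standard fixed-point/smallness argument.
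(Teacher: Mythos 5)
Your proposal is correct and, in its operative steps (your second and third paragraphs), is essentially the paper's own proof: linearize about $u_X$, use local conservation to reduce the $k=0$ residual to a single per-interface relation yielding the contraction factor $-\partial_{u^+}\hat{h}(u_X,u_X)/\partial_{u^-}\hat{h}(u_X,u_X)$ with modulus below one by (\ref{eq:hyp_exp_decay}), use the invertibility of the $f'(u_X)\mathbf{N}$-type block (the same Jacobian computation as in Theorem~\ref{th:sol_super_zone_conv_flx}, via the implicit function theorem) to force the interior modes $\varsigma_j^l$, $l<p$, to ${\cal O}(\epsilon^2)$ so that only the highest DOF carries the leading perturbation, and induct outward from $J^\pm$. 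Your opening transfer-matrix framing with its stable/unstable eigenvalue count is an unnecessary detour that you never have to execute: since at steady state every interface flux equals $f_\infty$ by (\ref{eq:steady_discrete_scheme}a,b), the three-term recurrence $A\boldsymbol{\varsigma}_j=B\boldsymbol{\varsigma}_{j+1}+C\boldsymbol{\varsigma}_{j-1}$ collapses to a first-order, one-directional one, which is exactly the decoupling the paper exploits.
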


\begin{proof}
We first consider supersonic cells $j\leq J^-$ and proceed by induction. Let an interface $j+\tfrac{1}{2}<J^-+\tfrac{1}{2}$ and assume that the property holds true in cell $j+1$: $u_h(x)=u_L+\sum_{l=0}^p\varsigma_{j+1}^l\phi_{j+1}^l(x)$ for all $x\in\kappa_{j+1}$ with $\sum_{l=0}^p|\varsigma_{j+1}^l|\leq\epsilon|u_L|$. Now define ${\bf G}_{j+1/2}:\mathbb{R}^{p+1}\times\mathbb{R}^{p+1}\rightarrow\mathbb{R}^{p+1}$ the application whose components are the elementwise residuals in (\ref{eq:steady_discrete_scheme}a), or equivalently (\ref{eq:proof_sup1}), and (\ref{eq:steady_discrete_scheme}c): 
\begin{equation*}
 G_{j+1/2}^k({\bf U}_j,{\bf U}_{j+1}):={\cal R}_j^k(u_h)=0, \quad 0\leq k\leq p. 
\end{equation*}

Note that ${\bf G}_{j+1/2}$ also vanishes at $({\bf U}_L,{\bf U}_L)$ by consistency. By assumption of monotonicity of $\hat{h}$ over $\Omega_a\times\Omega_a$, there exists a neighborhood ${\cal V}_{j+1/2}\subset\mathbb{R}^{p+1}\times\mathbb{R}^{p+1}$ of $({\bf U}_L,{\bf U}_L)$ where ${\bf G}_{j+1/2}$ is continuously differentiable and has partial derivatives
\begin{subeqnarray*}
 \partial_{U_j^l} G_{j+1/2}^0 &=& \partial_{u^-}\hat{h}(u_{j+1/2}^-,u_{j+1/2}^+) = \partial_{u^-}\hat{h}(u_L,u_L), \quad 0\leq l\leq p,\\
 \partial_{U_{j+1}^l} G_{j+1/2}^0 &=& (-1)^l\partial_{u^+}\hat{h}(u_{j+1/2}^-,u_{j+1/2}^+) = (-1)^l\partial_{u^+}\hat{h}(u_L,u_L), \quad 0\leq l\leq p,\\
 \partial_{U_j^l} G_{j+1/2}^k &=&\int_{\kappa_j}f'(u_h)\phi_j^ld_x\phi_j^k dx = f'(u_L)N_{k,l}, \quad 0<k\leq p,\;0\leq l\leq p,\\
 \partial_{U_{j+1}^l} G_{j+1/2}^k &=& 0, \quad 0<k\leq p,\; 0\leq l\leq p,
\end{subeqnarray*}


\noindent at $({\bf U}_L,{\bf U}_L)$, where the coefficients $N_{k,l}$ are defined by (\ref{eq:matN_kl}). In the same way as in proof of Theorem~\ref{th:sol_super_zone_conv_flx}, the Jacobian becomes
%
\begin{equation*}
 |\nabla_{{\bf U}_j}{\bf G}_{j+1/2}|({\bf U}_L,{\bf U}_L)=(-2f'(u_L))^p \partial_{u^-}\hat{h}(u_L,u_L) \neq0 
\end{equation*}

\noindent from $f'(u_L)>0$, monotonicity of $\hat{h}$ and the first inequality in (\ref{eq:hyp_exp_decay}). Applying the implicit function theorem, there exists a neighborhood ${\cal U}_{j+1/2}\subset\mathbb{R}^{p+1}$ about ${\bf U}_L$ and a continuously differentiable application $\varphi:{\cal U}_{j+1/2}\ni{\bf U}_{j+1}\mapsto{\bf U}_{j}=\varphi({\bf U}_{j+1})\in\mathbb{R}^{p+1}$ such that ${\bf G}_{j+1/2}(\varphi({\bf U}_{j+1}),{\bf U}_{j+1})=0$. Setting $u_h|_{\kappa_{j}}\equiv u_L+\sum_{l=0}^p\varsigma_{j}^l\phi_{j}^l$, a Taylor development of $G_{j+1/2}^0$ about ${\bf U}_L$ reads
\begin{eqnarray*}
 G_{j+1/2}^0({\bf U}_j,{\bf U}_{j+1}) &=& G_{j+1/2}^0(u_L,u_L)+\partial_{u^-}\hat{h}(u_L,u_L)\sum_{l=0}^p\varsigma_{j}^l \nonumber\\
 && + \partial_{u^+}\hat{h}(u_L,u_L)\sum_{l=0}^p(-1)^l\varsigma_{j}^l+{\cal O}(\epsilon^2u_L^2)\nonumber\\
  &\simeq& \partial_{u^-}\hat{h}(u_L,u_L)(u_{j+1/2}^--u_L)  + \partial_{u^+}\hat{h}(u_L,u_L)(u_{j+1/2}^+-u_L).
\end{eqnarray*}

We thus obtain that
\begin{equation}\label{eq:sign_jump-a}
 u_{j+1/2}^--u_L \simeq -\frac{\partial_{u^+}\hat{h}(u_L,u_L)}{\partial_{u^-}\hat{h}(u_L,u_L)}(u_{j+1/2}^+-u_L).
\end{equation}

From (\ref{eq:hyp_exp_decay}), the left and right traces of the numerical solution have the same sign. Likewise, a Taylor development of the applications $G_{j+1/2}^k$ with $0<k\leq p$ about ${\bf U}_L$ reads
\begin{equation*}
 G_{j+1/2}^k({\bf U}_j,{\bf U}_{j+1}) = 0 = -f'(u_L)\sum_{l=0}^pN_{k,l}\varsigma_j^l+{\cal O}(\epsilon^2u_L^2), \quad 0< k\leq p,
\end{equation*}

\noindent which induces $\varsigma_j^l={\cal O}(\epsilon^2u_L^2)$ for $0\leq l<p$. The last coefficient is defined through the relation (\ref{eq:sign_jump-a}) and reads
\begin{equation*}
 \varsigma_j^p \simeq -\frac{\partial_{u^+}\hat{h}(u_L,u_L)}{\partial_{u^-}\hat{h}(u_L,u_L)}(u_{j+1/2}^+-u_L).
\end{equation*}

From (\ref{eq:hyp_exp_decay}), we have $|\partial_{u^+}\hat{h}(u_L,u_L)/\partial_{u^-}\hat{h}(u_L,u_L)|<1$ and the last coefficient of the perturbation, $\varsigma_j^p$, thus decay exponentially fast as $j\rightarrow-\infty$, while other coefficients are always smaller than $\varsigma_j^p$. Therefore, the induction holds true in cell $\kappa_j$ and the proof is complete since it is assumed to be true in cell $J^-$.

The proof is similar for subsonic cells. Introducing the application 
\begin{equation*}
 G_{j-1/2}^k({\bf U}_{j-1},{\bf U}_{j}):={\cal R}_j^k(u_h)=0, \quad 0\leq k\leq p,
\end{equation*}
\noindent defined by (\ref{eq:steady_discrete_scheme}b) and (\ref{eq:steady_discrete_scheme}c) for $j>J^+$ and assuming small perturbations about ${\bf U}_R=(u_R,0,\dots,0)^\top$ in $\mathbb{R}^{p+1}$, the implicit function theorem and a Taylor development about $({\bf U}_R,{\bf U}_R)$ give $\varsigma_{j}^k={\cal O}(\epsilon^2u_R^2)$ for $0\leq k<p$ and
\begin{equation*}
 \varsigma_{j}^p \simeq -\frac{\partial_{u^-}\hat{h}(u_R,u_R)}{\partial_{u^+}\hat{h}(u_R,u_R)}(u_{j-1/2}^--u_R).
\end{equation*}
\end{proof}

Note that the assumption (\ref{eq:hyp_exp_decay}) holds for the LLF flux. Indeed, we have $\partial_{u^-}\hat{h}(u^-,\\ u^+) = (f'(u^-)+\alpha)/2$ and $\partial_{u^+}\hat{h}(u^-,u^+)=(f'(u^+)-\alpha)/2$ with $-\alpha<f'(u_R)<0<f'(u_L)<\alpha$. Numerical experiments of section \ref{sec:num_xp_prof} will illustrate the results of Theorem~\ref{th:exp_decay}. In \cite{smyrlis90}, Smyrlis used similar arguments to demonstrate the exponential decay of oscillations in the vicinity of a stationary shock for the Lax-Wendroff scheme observed in precedent numerical experiments \cite{harten_et_al76}. In this case, monotonicity is lost and oscillations change sign from one cell to another.

The proof of Theorem~\ref{th:exp_decay} shows that the oscillations are transmitted from a cell to its neighboring cell $\kappa_j$ by the numerical fluxes and that only the highest DOF is affected. In particular, setting $U_j^p=0$ one recovers the uniform solution up to first order: $u_h|_{\kappa_j}=u_X+{\cal O}(\epsilon^2u_X^2)$. This may motivate limiting techniques of the solution to suppress spurious oscillations and a first attempt in this direction will be shown in section~\ref{sec:SVM}.

\subsection{Numerical solution in the shock region with the Godunov flux}\label{sec:steady_shock_sol_god}

In the case of the Godunov flux, the mean value of the numerical solution in the shock cell is explicitly known. This is the object of the following lemma.

\begin{lemma}[Global conservation]\label{th:shock_sol_DOF0}
Let $h>0$ and $p\geq0$ and use the Godunov flux (\ref{eq:godunov_flux}), then under the assumptions (\ref{eq:sign_condition_conv-flx}) the first DOF in cell $\kappa_{0}$ is defined by
\begin{equation}\label{eq:shock_sol_DOF0}
 U_{0}^0 = \langle u\rangle_0 = \frac{(1+s_{c})u_L+(1-s_c)u_R}{2},
\end{equation}
\noindent where $\langle u\rangle_0$ denotes the mean value (\ref{eq:mean_sol}) of the exact solution (\ref{eq:exact_sol}) in cell $\kappa_0$, and
\begin{equation}\label{eq:def_sc}
 s_{c}=\frac{2}{h}(x_c-x_{0})
\end{equation}

\noindent is the relative shock position in $\kappa_{0}$ and has values $s_{c}=\pm1$ when the shock is at interfaces $x_{\pm1/2}$.
\end{lemma}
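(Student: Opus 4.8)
The plan is to separate the statement into a routine part and the substantive part. The routine part is that the right-hand side of (\ref{eq:shock_sol_DOF0}) is exactly the cell average $\langle u\rangle_0$ of the exact solution (\ref{eq:exact_sol}) over $\kappa_0$. Since the shock lies strictly inside $\kappa_0$, the exact solution equals $u_L$ on $[x_{-1/2},x_c)$ and $u_R$ on $(x_c,x_{1/2}]$, so that $\langle u\rangle_0=\tfrac{1}{h}\big(u_L(x_c-x_{-1/2})+u_R(x_{1/2}-x_c)\big)$. Using $x_{\pm1/2}=x_0\pm h/2$ together with $x_c-x_0=\tfrac{h}{2}s_c$ from (\ref{eq:def_sc}), one has $x_c-x_{-1/2}=\tfrac{h}{2}(1+s_c)$ and $x_{1/2}-x_c=\tfrac{h}{2}(1-s_c)$, which reproduces the claimed expression and, for $s_c=\pm1$, the values $u_L$ and $u_R$ expected when the shock sits at $x_{\pm1/2}$.

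The substantive part is the identity $U_0^0=\langle u\rangle_0$, i.e. $\int_{\kappa_0}(u_h-u)\,dx=0$, which I would derive from global conservation. Under (\ref{eq:sign_condition_conv-flx}) we are in the setting of Theorems \ref{th:sol_super_zone_conv_flx} and \ref{th:sol_sub_zone_conv_flx}, so $u_h\equiv u_L$ in every cell $j<0$ and $u_h\equiv u_R$ in every cell $j>0$; because the shock is strictly contained in $\kappa_0$, the exact solution likewise has $\langle u\rangle_j=u_L$ for $j<0$ and $\langle u\rangle_j=u_R$ for $j>0$. Hence $u_h-u$ is supported in the single cell $\kappa_0$, and the whole claim reduces to showing that this compactly supported difference has vanishing total integral.

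To obtain this I would invoke the discrete analogue of the conservation identity (\ref{eq:global_cons_edp}). Testing the formulation (\ref{eq:discr_var_form}) with the constant function $v_h\equiv1$, which is admissible in ${\cal V}_h^p$, annihilates the volume flux term since $\partial_x v_h=0$ and leaves only interface fluxes that telescope; by (\ref{eq:discrete_shock_at_infty}) the fluxes at $\pm\infty$ both tend to $f_\infty$, so the net flux vanishes and $\int_{\Omega_h}u_h\,dx$ is conserved in time. Combining this with the $L^2$ initial projection (\ref{eq:discr_CI}), which preserves cell averages, and with (\ref{eq:global_cons_edp}) for the exact solution, the numerical and exact total masses coincide; subtracting the contributions of the cells $j\neq0$, which cancel by the previous paragraph, gives $\int_{\kappa_0}(u_h-u)\,dx=0$, that is $h\,U_0^0=\int_{\kappa_0}u\,dx$ by (\ref{eq:mean_sol}), and hence (\ref{eq:shock_sol_DOF0}).

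The main obstacle is making this conservation step rigorous on the unbounded domain: the integrals $\int_{\Omega_h}u_h\,dx$ are infinite because $u_h$ tends to the nonzero constants $u_L,u_R$ at infinity. I expect to handle this by running the telescoping argument on a finite window $\bigcup_{j=-M}^{N}\kappa_j$ and letting $M,N\to\infty$, the boundary flux difference converging to $f_\infty-f_\infty=0$ by (\ref{eq:discrete_shock_at_infty}), or equivalently by working throughout with the compactly supported difference $u_h-u$, whose integral is finite. It is worth noting that the steady residuals (\ref{eq:steady_discrete_scheme}) alone cannot fix $U_0^0$ — their $k=0$ component only enforces the flux balance $f_\infty=f_\infty$ — which is exactly why the mean value in the shock cell is determined by global conservation inherited from the data rather than by the local steady equations.
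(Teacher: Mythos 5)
Your proof is correct and follows essentially the same route as the paper's: test with cell indicators (your $v_h\equiv1$ on a truncated window amounts to the paper's $v_h=1_{\kappa_j}$ summed over cells), telescope the fluxes using (\ref{eq:discrete_shock_at_infty}), invoke Theorems~\ref{th:sol_super_zone_conv_flx} and~\ref{th:sol_sub_zone_conv_flx} to localize $u_h-u$ to $\kappa_0$, and close with the initial projection (\ref{eq:discr_CI}) together with the exact conservation identity (\ref{eq:global_cons_edp}). If anything, you are more careful on the one delicate point than the paper, which subtracts the individually divergent quantities $h\sum_j U_j^0$ and $\int_\Omega u\,dx$ directly, whereas your finite-window limit (equivalently, working throughout with the compactly supported difference $u_h-u$) is the rigorous form of that subtraction; your closing observation that the steady residuals alone cannot determine $U_0^0$ correctly identifies why the argument must run through the time-dependent scheme and the initial data.
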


\begin{proof}
Setting $v_h=1_{\kappa_j}$ into (\ref{eq:discr_var_form}) and summing up the results over all cells $\kappa_j$ in $\Omega_h$, one obtains
\begin{equation*}
 h\frac{d}{dt}\Big(\sum_{\kappa_j\in\Omega_h}U_j^0\Big) = \sum_{\kappa_j\in\Omega_h}{\cal R}_j^0 = -\sum_{\kappa_j\in\Omega_h} \hat{h}_{j+\frac{1}{2}}-\hat{h}_{j-\frac{1}{2}} = f(u_L)-f(u_R) = 0,
\end{equation*}
\noindent according to (\ref{eq:discrete_shock_at_infty}). Integrating (\ref{eq:model_pb}a) in space over $\Omega$, one obtains $d_t\int_\Omega udx=f(u_L)-f(u_R)=0$. Subtracting this result to the above equation, one obtains
\begin{equation*}
 \frac{d}{dt}\Big(h\sum_{\kappa_j\in\Omega_h}U_j^0-\int_{\Omega}u(x) dx\Big) = h\frac{d}{dt}\Big(U_0^0-\langle u\rangle_0\Big) = 0,
\end{equation*}
\noindent from Theorems~\ref{th:sol_super_zone_conv_flx} and \ref{th:sol_sub_zone_conv_flx}, which gives $U_0^0=\langle u\rangle_0$ from (\ref{eq:discr_CI}).
\end{proof}

\subsection{Some comments on assumption (\ref{eq:sign_condition_conv-flx})}\label{sec:comments_trace_ineq}

The above results with the Godunov flux are based on assumption (\ref{eq:sign_condition_conv-flx}) on the traces of the numerical solution at interfaces of the shock cell. However, from (\ref{eq:sol_godunov_flux}) we observe that this assumption may be violated when either (i)  $u_{1/2}^- \leq u_L$ and $u_{-1/2}^+ = u_R$, or (ii) $u_{1/2}^- = u_L$ and $u_{-1/2}^+ \geq u_R$. Our objective is here to show that, up to a shift in the index of the shock cell, $j_c$, (\ref{eq:godunov-flx_interf_jc}) still holds. This is a consequence of the following result: for oscillations in cell $\kappa_0$, situations (i) and (ii) reduce to 
\begin{subeqnarray}\label{eq:sol_violating_hyp_traces}
 u_{-3/2}^+ = u_{-1/2}^-=u_L, & u_{-1/2}^+ = u_{1/2}^-=u_R,\\
 u_{-1/2}^+ = u_{1/2}^- =u_L, & u_{1/2}^+ = u_{3/2}^- = u_R,
\end{subeqnarray}

\noindent respectively. To prove these results, consider the local residuals (\ref{eq:local_residuals}) at steady-state. Setting $v_h=u_h$ and imposing the conservation property (\ref{eq:steady_local_cons}), one obtains
\begin{eqnarray}\label{eq:steady_residual_nrj}
 {\cal R}_j(u_h,u_h) &=& g(u_{j+1/2}^-)-g(u_{j-1/2}^+) - f_\infty(u_{j+1/2}^--u_{j-1/2}^+) \nonumber\\
 &=& \big(f(\xi_j)-f_\infty\big)(u_{j+1/2}^--u_{j-1/2}^+) = 0, \quad \forall j\in\mathbb{Z},
\end{eqnarray}

\noindent where $g(u)=\int^uf(v)dv$ and $\min(u_{j-1/2}^+,u_{j+1/2}^-)\leq\xi_j\leq\max(u_{j-1/2}^+,u_{j+1/2}^-)$ from the mean value theorem. The above equation has two solutions: either $u_{j+1/2}^-=u_{j-1/2}^+$, or $f(\xi_j)=f_\infty$ and $\min(u_{j-1/2}^+,u_{j+1/2}^-)<\xi_j<\max(u_{j-1/2}^+,u_{j+1/2}^-)$.

Now, suppose for instance that case (i) holds so that $u_{-1/2}^+=u_R$. Applying (\ref{eq:steady_residual_nrj}) with $j=0$, we obtain either the trivial solution $u_{1/2}^-=u_R$, or $f(\xi_0)=f_\infty$ with $\xi_0\neq u_R$. In the latter case, we have $u_R<\xi_0=u_L<u_{1/2}^-$ by coercivity and strict convexity of $f$ (see section~\ref{sec:model_eqn}). This latter result cannot hold because of the definition of the Godunov flux which imposes $u_{1/2}^-\leq u_L$ by (\ref{eq:sol_godunov_flux}), so $u_{1/2}^-=u_R$.

Then, every solutions such that $u_{-1/2}^-<u_L$ correspond to situations where the oscillations are in cell $\kappa_{j_c}$ with $j_c\leq-1$. Indeed, by (\ref{eq:godunov_flux}) one would have $\hat{h}(u_{-1/2}^-,u_{-1/2}^+)=f(u_{-1/2}^+)$ and since $\hat{h}(u_{1/2}^-,u_{1/2}^+)=f(u_{1/2}^+)$, there exists a neighborhood of the uniform solution $u_h=u_R$ in cells $j\geq0$ where it is the unique solution from Theorem~\ref{th:sol_sub_zone_conv_flx}. Hence, by setting $\kappa_0$ the cell containing oscillations according to Theorems~\ref{th:sol_super_zone_conv_flx} and \ref{th:sol_sub_zone_conv_flx}, only $u_{-1/2}^-=u_L$ violates assumption (\ref{eq:sign_condition_conv-flx}). Using again (\ref{eq:steady_residual_nrj}), $u_{-1/2}^-=u_L$ imposes $u_{-3/2}^+=u_L$. Similar arguments hold for proving that (\ref{eq:sol_violating_hyp_traces}b) corresponds to the only situation of case (ii) violating (\ref{eq:sign_condition_conv-flx}).

To sum up, it is convenient to introduce an index of faces of the shock cell, $i_c+\tfrac{1}{2}=\pm\tfrac{1}{2}$. Then, the situations violating (\ref{eq:sign_condition_conv-flx}) reduce to one of the following cases
\begin{equation*}
 u_{i_c-1/2}^+=u_{i_c+1/2}^-=u_L, \quad u_{i_c+1/2}^+=u_{i_c+3/2}^-=u_R, \quad i_c+\tfrac{1}{2}=\pm\tfrac{1}{2}.
\end{equation*}

Therefore, the Godunov flux is not differentiable at $x_{i_c+1/2}$ only, $\hat{h}_{i_c+1/2} = f(u_{i_c+1/2}^\pm)$: one solution satisfies (\ref{eq:godunov-flx_interf_jc}) and the analysis of section~\ref{sec:steady_shock_sol_conv} remains valid, but the other one corresponds to oscillations in a neighboring cell. In the following, we will only consider solutions for which (\ref{eq:godunov-flx_interf_jc}) holds without loss of generality. Among these solutions, the situations violating (\ref{eq:sign_condition_conv-flx}) will be analyzed in the linear stability analysis of section~\ref{sec:ana_stab} and numerical experiments of section~\ref{sec:num_xp}.


%
%
\section{Steady discrete shock solutions for the Burgers equation}\label{sec:steady_shock_sol_burgers}

We are now interested in steady-state solutions of the inviscid Burgers equation, \ie, (\ref{eq:model_pb}) with $f(u)=\tfrac{1}{2}u^2$. According to  Theorems~\ref{th:sol_super_zone_conv_flx} and \ref{th:sol_sub_zone_conv_flx}, the uniform numerical solutions $u_h\equiv u_L$ in the supersonic region and $u_h\equiv u_R$ in the subsonic region hold with the Godunov flux. We thus restrict our analysis to the solution of the numerical scheme in the cell containing the shock. The result now depends on the order of the numerical scheme and we give the solutions for $1\leq p\leq3$ in the following theorem.

\begin{theorem}\label{th:shock_sol_p0123}
Consider the discrete DG scheme with the Godunov flux (\ref{eq:godunov_flux}). Let the exact position of the shock be in cell $\kappa_0$ and assume that the internal traces satisfy (\ref{eq:sign_condition_conv-flx}). Then, for $h>0$, according to the polynomial degree, $p\leq3$, and the relative position of the shock in the cell, $s_c$, defined by (\ref{eq:def_sc}), the solution of the discrete scheme (\ref{eq:steady_discrete_scheme}) reads: 
\begin{equation*}
 u_h(x) = u_L\sum_{l=0}^pu_l\phi_{0}^l(x), \quad \forall x \in \kappa_{0},
\end{equation*}
\noindent with $u_0=s_c$ and if $p=1$:
\begin{equation}\label{eq:shock_sol_p1}
 -1<s_{c}<1, \quad u_1 = -\sqrt{3(1-s_{c}^2)};
\end{equation}
\noindent if $p=2$: 
\begin{subeqnarray}\label{eq:shock_sol_p2}
  -1<s_{c}<-\frac{2}{3}, &\quad& \left\{
  \begin{array}{rcl}
   u_1 &=& 0,\\
   u_2 &=& \sqrt{5(1-s_{c}^2)},
  \end{array}
  \right.\\
  -\frac{2}{3}< s_{c}< \frac{2}{3}, &\quad& \left\{
  \begin{array}{rcl}
   u_1 &=& -\sqrt{3(1-\tfrac{9}{4}s_{c}^2)},\\
   u_2 &=& -\frac{5}{2}s_{c},
  \end{array}
  \right.\\
  \frac{2}{3}< s_{c}<1, &\quad& \left\{
  \begin{array}{rcl}
   u_1 &=& 0,\\
   u_2 &=& -\sqrt{5(1-s_{c}^2)};
  \end{array}
  \right.
\end{subeqnarray}
\noindent if $p=3$: 
 \begin{subeqnarray}
  s_c\in{\cal D}_1, && \left\{
  \begin{array}{rcl}
   u_1 &=& -\tfrac{1}{5}\sqrt{\frac{21}{2}}\sqrt{5-54s_{c}^2},\\
   u_2 &=& -7s_{c},\\
   u_3 &=& -u_1,
 \end{array}
  \right.\\
  s_c\in{\cal D}_2, && \left\{
  \begin{array}{rcl}
   u_1 &=& \frac{-1}{20\sqrt{3s_c\ol{\Delta}_2}}\big[-7(17s_{c}^2-6)^3+49s_{c}(17s_{c}^2-6)^2\ol{\Delta}_3^{1/3}+ \\
             && 26s_{c}^2(17s_{c}^2-6)\ol{\Delta}_3^{2/3}-s_{c}(463s_{c}^2-246)\ol{\Delta}_3-\\
             && 26s_{c}^2\ol{\Delta}_2\ol{\Delta}_3^{1/3}+s_{c}\ol{\Delta}_3^{5/3}+\ol{\Delta}_2^2\big]^{1/2},\\
   u_2 &=& \frac{1}{12}\big[-7s_{c}-\frac{7(17s_{c}^2-6)}{\ol{\Delta}_3^{1/3}}+\ol{\Delta}_3^{1/3}\big],\\
   u_3 &=& \frac{u_1}{3888s_{c}(s_{c}^2-1)\ol{\Delta}_2}\big[-35(17s_{c}^2-6)^3+\\
             && 4207s_{c}(17s_{c}^2-6)^2\ol{\Delta}_3^{1/3}-420s_ {c}^2(17s_{c}^2-6)\ol{\Delta}_3^{2/3}+\\
             && \ol{\Delta}_2\big(2s_{c}(5969s_{c}^2-2889)+420s_{c}^2\ol{\Delta}_3^{1/3}+\\
             && 60s_{c}\ol{\Delta}_3^{2/3}+5\ol{\Delta}_2\big)\big],
  \end{array}
  \right.
 \end{subeqnarray}
\setcounter{equation}{2}
\begin{subeqnarray}\label{eq:shock_sol_p3}
 \setcounter{subequation}{3}
  s_c\in{\cal D}_3, && \left\{
  \begin{array}{rcl}
   u_1 &=& \frac{-1}{20\sqrt{\Delta_2}}\Big[\big(108252-386051s_{c}^2+335619s_{c}^4\big)^2+ \\
             &&  \sqrt{3}\Delta_1\Big(1164-2403s_{c}^2+\Delta_3^{1/3}(26s_{c}+7\Delta_3^{1/3})\Big)+\\
             &&  \Delta_3^{1/3}(588-914s_{c}^2+1089s_{c}^4)+\\
             &&  \Delta_3^{2/3}(703-1125s_{c}^2)\Big]^{1/2},\\
   u_2 &=& \frac{1}{12}\big[-7s_{c}-\frac{7(17s_{c}^2-6)}{\Delta_3^{1/3}}-\Delta_3^{1/3}\big],\\
   u_3 &=& \frac{7u_1}{108(1-s_{c}^2)\Delta_3^{2/3}}\big[35(12-161s_{c}^2+236s_{c}^4)+\\
             && 5\sqrt{3}\Delta_1(\Delta_3^{1/3}-7)+5\Delta_3^{1/3}s(79-100s_{c}^2)+\\
             && 2\Delta_3^{2/3}(32s_{c}^2-17)\big],
  \end{array}
  \right.
 \end{subeqnarray} 
\noindent where ${\cal D}_1=(-\tfrac{1}{6},\tfrac{1}{6})$, ${\cal D}_2=(-\sqrt{\frac{6}{17}},-\tfrac{1}{6})$, ${\cal D}_3=(-1,-\sqrt{\frac{6}{17}})\cup(\tfrac{1}{6},1)$; $\Delta_3=7\Delta_2$, $\ol{\Delta}_3=-7\Delta_2$, $\ol{\Delta}_2=-\Delta_2$ and
 \begin{subeqnarray}\label{eq:prop_def_Deltai}
  \Delta_2 &=& 3\sqrt{3}\Delta_1 - 419s_{c}^3 + 279s_{c}, \\
  \Delta_1 &=& \big[7776s_{c}^6-10008s_{c}^4+3359s_{c}^2-56\big]^{\tfrac{1}{2}}.
 \end{subeqnarray}

\end{theorem}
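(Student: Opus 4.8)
The plan is to reduce everything to the single shock cell $\kappa_0$ and then to an explicit finite-dimensional polynomial system. First I would specialize the convex-flux framework to $f(u)=\tfrac{1}{2}u^2$: the Rankine--Hugoniot and Lax conditions (\ref{eq:RH_cond})--(\ref{eq:Lax_cond_hat-u}) then force $u_R=-u_L$, $\hat u=0$ and $f_\infty=\tfrac{1}{2}u_L^2$. By Theorems~\ref{th:sol_super_zone_conv_flx} and~\ref{th:sol_sub_zone_conv_flx} the profile is the constant $u_L$ in every supersonic cell and $u_R$ in every subsonic cell, while assumption (\ref{eq:sign_condition_conv-flx}) makes the Godunov flux fully upwind at $x_{\pm1/2}$ through (\ref{eq:godunov-flx_interf_jc}), so both interface fluxes in ${\cal R}_0^k$ equal $f_\infty$ and equations (\ref{eq:steady_discrete_scheme}) hold in $\kappa_0$ as well. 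The mean value is then fixed by Lemma~\ref{th:shock_sol_DOF0}, which with $u_R=-u_L$ gives $U_0^0=u_Ls_c$, \ie\ $u_0=s_c$. Only the $p$ interior equations (\ref{eq:steady_discrete_scheme}c) for $1\le k\le p$ remain to be solved in the shock cell.

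Next I would pass to the reference variable $s=2(x-x_0)/h\in[-1,1]$ and set $u_h/u_L=w(s)=\sum_{l=0}^pu_lL_l(s)$. Because $f(u_h)=\tfrac{1}{2}u_L^2w^2$ and $f_\infty=\tfrac{1}{2}u_L^2$, the interior residuals reduce to the dimensionless system
\[
 \int_{-1}^1 w(s)^2\,d_sL_k(s)\,ds = 1-(-1)^k, \qquad 1\le k\le p.
\]
Expanding $w^2=\sum_{m,n}u_mu_nL_mL_n$ and using the explicit projections (\ref{eq:matM_kl}) of the Legendre derivatives together with the linearization (product-to-sum) formula for products of Legendre polynomials, each left-hand side becomes an explicit inhomogeneous quadratic form in $(u_1,\dots,u_p)$ whose coefficients depend on $s_c$ through the fixed value $u_0=s_c$. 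Since $d_sL_k$ has degree $k-1$, the family $\{d_sL_k\}_{1\le k\le p}$ is a basis of the polynomials of degree $\le p-1$, so this is a determined system of $p$ equations in the $p$ unknowns $u_1,\dots,u_p$.

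I would then solve it order by order. For $p=1$ the single equation is quadratic and yields $u_1^2=3(1-s_c^2)$, recovering (\ref{eq:shock_sol_p1}). For $p=2$ the $k=2$ equation factors as $u_1$ times a linear form in $u_2$, producing the two regimes $u_1=0$ and $u_2=-\tfrac{5}{2}s_c$; inserting each into the $k=1$ equation fixes the magnitudes $u_2^2=5(1-s_c^2)$ and $u_1^2=3(1-\tfrac{9}{4}s_c^2)$, recovering (\ref{eq:shock_sol_p2}). For $p=3$ I would eliminate $u_1$ and $u_3$ to obtain a single cubic in $u_2$ and solve it by Cardano's formula, which is exactly what produces the radicals $\Delta_1,\Delta_2,\Delta_3$ and their barred counterparts of (\ref{eq:prop_def_Deltai}) in (\ref{eq:shock_sol_p3}). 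In each case the admissible branch is selected by demanding that $w$ be real and satisfy the trace inequalities (\ref{eq:sign_condition_conv-flx}), \ie\ $w(1)<1$ and $w(-1)>-1$; this picks the decreasing profile ($u_1<0$) and fixes the sign of every square root, while reality of the radicals delimits the $s_c$-subintervals on which each branch is valid.

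Throughout I would exploit the reflection symmetry of the Burgers problem with $u_R=-u_L$: the substitution $w(s)\mapsto-w(-s)$, together with $L_l(-s)=(-1)^lL_l(s)$, leaves the above system invariant while sending $s_c\mapsto-s_c$ and $u_l\mapsto(-1)^{l+1}u_l$, so the coefficients of even index are odd in $s_c$ and those of odd index are even. This halves the algebraic bookkeeping and identifies the interval endpoints $\pm\tfrac{2}{3}$ (for $p=2$) and $\pm\tfrac{1}{6}$, $\pm\sqrt{6/17}$ (for $p=3$) as the values of $s_c$ at which a coefficient vanishes or a discriminant changes sign. The main obstacle is the $p=3$ case: the elimination down to one cubic and the careful, interval-by-interval selection on ${\cal D}_1,{\cal D}_2,{\cal D}_3$ of the correct real root and of the correct sign of the remaining square roots---so that precisely one entropy-admissible, trace-consistent profile survives on each subinterval---is the heavy computation behind the intricate closed forms in (\ref{eq:shock_sol_p3}).
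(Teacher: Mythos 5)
Your proposal is correct and follows essentially the same route as the paper's appendix proof: reduce to the shock cell via Theorems~\ref{th:sol_super_zone_conv_flx}--\ref{th:sol_sub_zone_conv_flx} and Lemma~\ref{th:shock_sol_DOF0} (giving $u_0=s_c$), turn the interior residuals (\ref{eq:steady_discrete_scheme}c) into a determined quadratic system in $(u_1,\dots,u_p)$, factor it for $p\leq2$, eliminate $u_1,u_3$ for $p=3$, and select branches by the trace inequalities (\ref{eq:CNS_sign_cond}), with your reflection symmetry $w(s)\mapsto-w(-s)$ a harmless variant of the paper's $(u_1,u_2,u_3)\mapsto(-u_1,u_2,-u_3)$. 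One small imprecision: for $p=3$ the elimination yields not a single cubic but the quartic $2(7s_c+u_2)\big(4u_2^3+7s_cu_2^2+14s_c^2u_2-\tfrac{7}{2}u_2+7s_c(1-s_c^2)\big)=0$, whose rational factor $u_2=-7s_c$ produces the ${\cal D}_1$ branch (\ref{eq:shock_sol_p3}a) that Cardano's radicals alone would miss, so taking your plan literally (``a single cubic'') would drop that solution.
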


\begin{proof}
 See Appendix~\ref{app:proof_sol_p123}.
\end{proof}

Several conclusions may be inferred from Theorem~\ref{th:shock_sol_p0123}. First, for a polynomial degree $p$, \ie, a given DG scheme, the solutions are parametrized by the relative shock position in the cell only. In particular, the numerical solution is independent of the mesh size and the amplitude of the oscillations are proportional to the shock strength $u_L-u_R$. These oscillations occur for $p\geq1$ and vanish if and only if the shock is at an interface: $s_c=\pm1$. Then, the coefficients are displayed in Figure~\ref{fig:evol_coeffs_p1-3}(a-c) and present continuous evolution with $s_c$. The validity of the assumption (\ref{eq:sign_condition_conv-flx}) is illustrated in Figure~\ref{fig:evol_coeffs_p1-3}(d) and is satisfied according to the ranges of solutions in Theorem \ref{th:shock_sol_p0123}. We observe that for $p=2$ (resp. $p=3$), the positions $s_c=\pm\tfrac{2}{3}$ (resp. $s_c=\pm\tfrac{1}{6}$) correspond to situations where $u_{-1/2}^+=u_{1/2}^-=u_L$ or $u_R$ (see section~\ref{sec:comments_trace_ineq}). It is remarkable that assumption (\ref{eq:sign_condition_conv-flx}) selects the solutions over the whole range $s_c$ in $[-1,1]$. Finally, the oscillatory behavior of the solution may lead to entropy violating solutions where the sign of eigenvalues changes locally upstream and/or downstream of the shock position. Since the numerical solution can potentially change sign $p$ times in a cell, this may lead to nonphysical situations. Numerical experiments in section \ref{sec:num_xp} will show that those situations may exist for $p\geq2$. This feature is different from what has been shown in \cite{lerat_13} for the high-order RBC schemes where the oscillations were seen to monotonically decrease for a shock position moving from the center of the cell to its faces.

\begin{figure}
\begin{center}
\subfigure[$p=1$]{\epsfig{figure=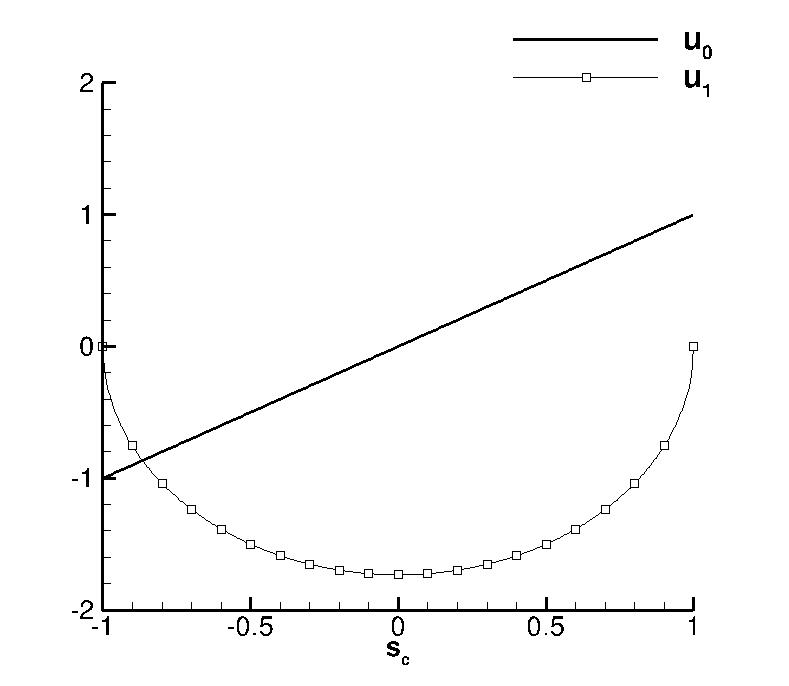 ,width=6cm}}
\subfigure[$p=2$]{\epsfig{figure=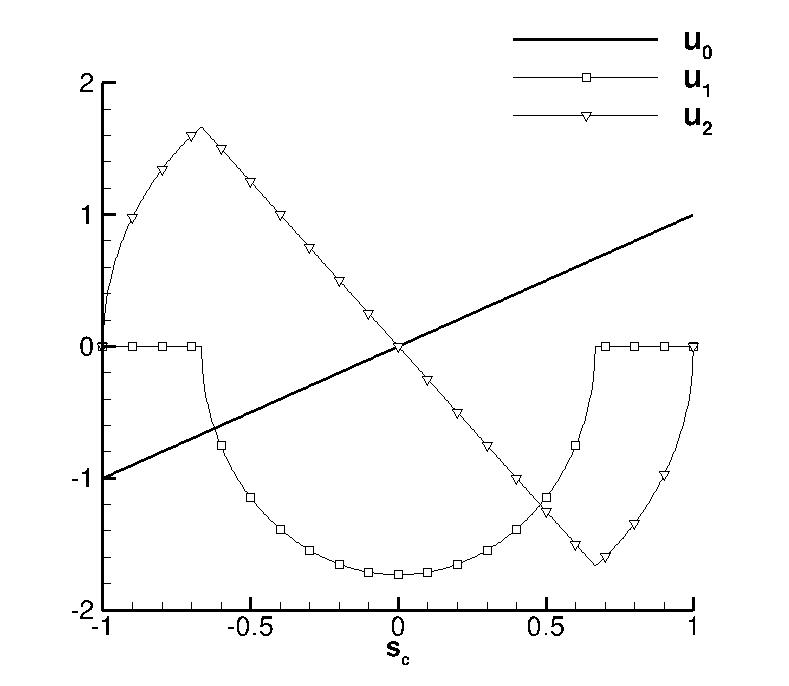 ,width=6cm}}\\
\subfigure[$p=3$]{\epsfig{figure=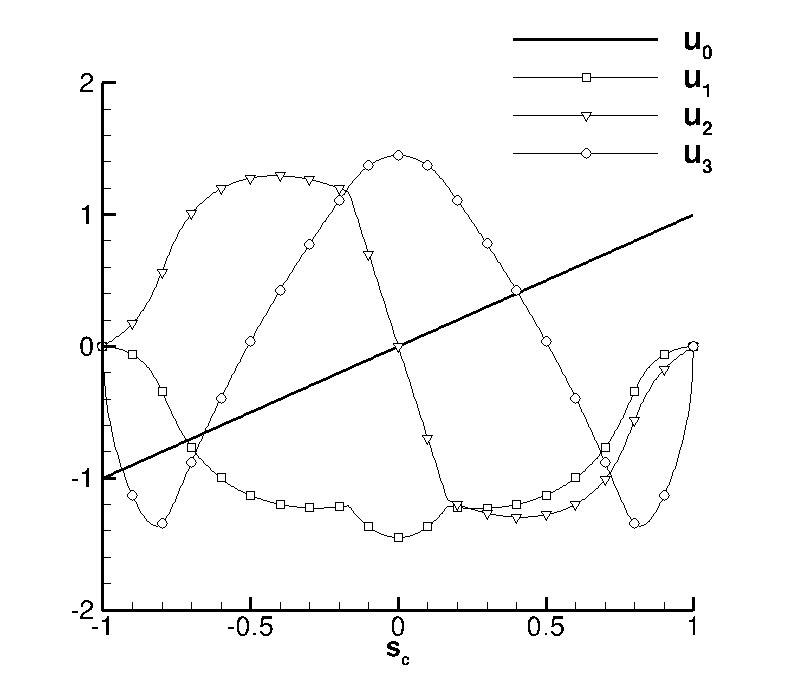 ,width=6cm}}
\subfigure[]     {\epsfig{figure=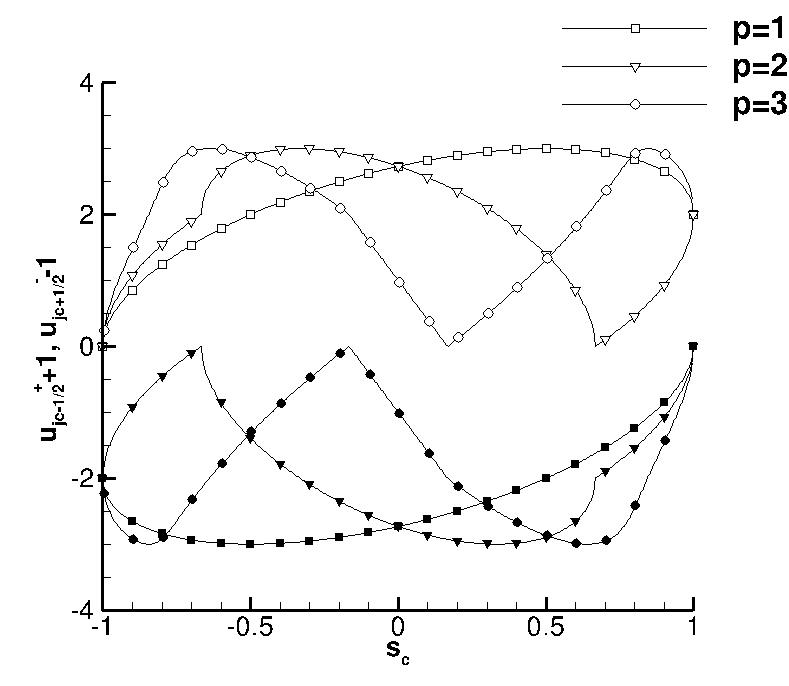 ,width=6cm}}
\caption{From (a) to (c): evolution of coefficients in the function basis of the solution in shock cell $\kappa_{0}$ given by Theorem~\ref{th:shock_sol_p0123}. In (d): evolution of the quantities $u_{-1/2}^++u_L$ (open symbols) and $u_{1/2}^-+u_R$ (full symbols) with $u_L=-u_R=1$}
\label{fig:evol_coeffs_p1-3}
\end{center}
\end{figure}

%
%
\section{Linear stability of steady shock profiles}\label{sec:ana_stab}
\subsection{Linearized operator}

In this section, we are interested in the linear stability of the  DG scheme around steady shock solutions $u_h$ satisfying (\ref{eq:steady_discrete_scheme}). We will mainly focus on sufficient conditions for instability of the numerical scheme. These conditions will then be used to analyze non-convergence to steady-state of the DG scheme observed under certain conditions in the experiments of section~\ref{sec:num_xp}.

To this end, we first consider a forward Euler method for the time discretization, setting $v_h\equiv\phi_j^k$ in (\ref{eq:discr_var_form}), we obtain 
\begin{equation}\label{eq:discrete_scheme}
 U_j^{k(n+1)} = U_j^{k(n)} + \lambda_k{\cal R}_j^k(u_h^{(n)}), \quad \forall j\in\mathbb{Z}, \quad 0\leq k \leq p,
\end{equation}

\noindent where $U_j^{k(n)}=U_j^{k}(n\Delta t)$ is a function of time, $\lambda_k=(2k+1)\lambda$ and $\lambda=\tfrac{\Delta t}{h}$ with $\Delta t>0$ the time step. We shall assume that the numerical flux is differentiable at every point $x_{j+1/2}$ with respect to the left and right traces $u_{j+1/2}^\pm$ of the steady solution. 

Let $\phi:{\cal V}_h^p\rightarrow{\cal V}_h^p;u_h^{(n)}\mapsto u_h^{(n+1)}=\phi(u_h^{(n)})$ be the map defined by (\ref{eq:discrete_scheme}) where $u_h^{(n)}$ is defined by (\ref{eq:_steady_num_sol}) with components $U_j^{k(n)}$. Now define the G\^ateaux derivative of $\phi$ at point $u_h$ in the direction $w_h$ by $L:{\cal V}_h^p\rightarrow{\cal V}_h^p;w_h\mapsto Lw_h=d\phi(u_h;w_h)$. Using the vector notation ${\bf Lw}$ for the components in ${\cal V}_h^p$ of $Lw_h$, the G\^ateaux derivative of the DG operator (\ref{eq:discrete_scheme}) along the direction $w_h$ reads
\begin{equation*}
 ({\bf L}{\bf w})_j^k = W_j^k + \lambda_k \lim_{\epsilon\rightarrow0} \frac{{\cal R}_j^k(u_h+\epsilon w_h)-{\cal R}_j^k(u_h)}{\epsilon}, \quad \forall j\in\mathbb{Z}, \quad 0\leq k \leq p.
\end{equation*}

Using expression (\ref{eq:steady_discrete_scheme_def}) for the local residuals, we obtain
\begin{eqnarray*}
 {\cal R}_j^k(u_h+\epsilon w_h) &=& \int_{\kappa_j} f(u_h+\epsilon w_h)d_x\phi_j^k dx - \hat{h}\big(u_{j+\frac{1}{2}}^-+\epsilon w_{j+\frac{1}{2}}^-,u_{j+\frac{1}{2}}^++\epsilon w_{j+\frac{1}{2}}^+\big) \\
&& + (-1)^k\hat{h}\big(u_{j-\frac{1}{2}}^-+\epsilon w_{j-\frac{1}{2}}^-,u_{j-\frac{1}{2}}^++\epsilon w_{j-\frac{1}{2}}^+\big).
\end{eqnarray*}

Substracting ${\cal R}_j^k(u_h)$, dividing by $\epsilon$, and letting $\epsilon$ tend to zero, the components of ${\bf L}{\bf w}$ read
\begin{eqnarray}
 ({\bf L}{\bf w})_j^k &=& W_j^k + \lambda_k \Big( \int_{\kappa_j}w_hf'(u_h)d_x\phi_j^kdx - w_{j+\frac{1}{2}}^-\partial_{u^-}\hat{h}_{j+\frac{1}{2}}-w_{j+\frac{1}{2}}^+\partial_{u^+}\hat{h}_{j+\frac{1}{2}} \nonumber\\
 & & + (-1)^k\big(w_{j-\frac{1}{2}}^-\partial_{u^-}\hat{h}_{j-\frac{1}{2}}+w_{j-\frac{1}{2}}^+\partial_{u^+}\hat{h}_{j-\frac{1}{2}}\big)\Big).
\end{eqnarray}

The linearized operator ${\bf L}$ is thus made of three diagonals of blocks of size $(p+1)\times(p+1)$ with entries
\begin{subeqnarray}\label{eq:linear_operator}
 ({\bf L}_{j,j-1})_{k,l} &=& (-1)^k\lambda_k\partial_{u^-}\hat{h}_{j-\frac{1}{2}},\\
 ({\bf L}_{j,j})_{k,l}   &=& \delta_{k,l} \hspace{-0.05cm}+\hspace{-0.05cm} \lambda_k\big(\hspace{-0.15cm}\int_{\kappa_j}\hspace{-0.2cm}f'(u_h)\phi_j^ld_x\phi_j^kdx - \partial_{u^-}\hat{h}_{j+\frac{1}{2}}-(-1)^{k}\partial_{u^+}\hat{h}_{j-\frac{1}{2}} \big),\\
 ({\bf L}_{j,j+1})_{k,l} &=& -(-1)^l\lambda_k\partial_{u^+}\hat{h}_{j+\frac{1}{2}},
\end{subeqnarray}
\noindent for $0\leq k,l\leq p$ and $j$ in $\mathbb{Z}$.

For the shock profile $u_h$ to be linearly stable, it is necessary that the spectrum of ${\bf L}$ contains only eigenvalues $\mu$ with modulus lower than unity, $|\mu|\leq1$, and semisimple eigenvalues with unit modulus, $|\mu|=1$ \cite{bultelle_et-al98}. In the following, we will focus on sufficient conditions for instability of shock profiles. 

\subsection{The case of the Godunov numerical flux}\label{sec:ana_stab_god}

The upwind character of the Godunov flux allows to specify the eigenvalues of the linearized operator in the following proposition.

\begin{proposition}\label{th:Godunov_spectrum}
Under the assumptions of Lemma~\ref{th:shock_sol_DOF0}, the spectrum of the linearized operator (\ref{eq:linear_operator}) for the Godunov flux (\ref{eq:godunov_flux}) reduces to the spectra of the following matrices of size $p+1$:
\begin{subeqnarray}\label{eq:linear_operator_god}
 ({\bf L}_{j<0})_{k,l} &=& \delta_{k,l} + \lambda_kf'(u_L)(N_{k,l}-1), \\
 ({\bf L}_{0})_{k,l}   &=& \delta_{k,l} + \lambda_k\int_{\kappa_{0}}f'(u_h)\phi_0^ld_x\phi_0^kdx,\\
 ({\bf L}_{j>0})_{k,l} &=& \delta_{k,l} + \lambda_kf'(u_R)(N_{k,l}+(-1)^{k+l}),
\end{subeqnarray}

\noindent for $0\leq k,l\leq p$, where $u_h$ denotes the solution of (\ref{eq:steady_discrete_scheme}) and $N_{k,l}$ is defined by (\ref{eq:matN_kl}).
\end{proposition}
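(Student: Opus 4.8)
The plan is to exploit the fully upwind character of the Godunov flux at the two interfaces bounding the shock cell $\kappa_0$ in order to turn the bi-infinite, block-tridiagonal operator (\ref{eq:linear_operator}) into a block-triangular one, whose spectrum is then the union of the spectra of its diagonal blocks. First I would tabulate the one-sided flux derivatives at every interface. Assumption (\ref{eq:sign_condition_conv-flx}) gives (\ref{eq:godunov-flx_interf_jc}), so $\hat{h}_{-1/2}=f(u_{-1/2}^-)$ and $\hat{h}_{1/2}=f(u_{1/2}^+)$ are each a function of a single trace; hence $\partial_{u^+}\hat{h}_{-1/2}=0$ and $\partial_{u^-}\hat{h}_{1/2}=0$. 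By Theorems~\ref{th:sol_super_zone_conv_flx} and~\ref{th:sol_sub_zone_conv_flx} the profile is uniform away from $\kappa_0$, equal to $u_L$ in every cell $j<0$ and to $u_R$ in every cell $j>0$; therefore at every supersonic interface $\partial_{u^-}\hat{h}=f'(u_L)$ and $\partial_{u^+}\hat{h}=0$, and at every subsonic interface $\partial_{u^+}\hat{h}=f'(u_R)$ and $\partial_{u^-}\hat{h}=0$.

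Substituting these values into (\ref{eq:linear_operator}) I would then read off the block pattern. The superdiagonal blocks ${\bf L}_{j,j+1}$, which carry $\partial_{u^+}\hat{h}_{j+1/2}$, vanish for all $j\leq-1$; the subdiagonal blocks ${\bf L}_{j,j-1}$, which carry $\partial_{u^-}\hat{h}_{j-1/2}$, vanish for all $j\geq1$. In particular the two blocks coupling $\kappa_0$ outward to its neighbours, ${\bf L}_{-1,0}$ and ${\bf L}_{1,0}$, are zero. For the diagonal blocks I use $u_h\equiv u_L$ on $\kappa_{j<0}$ together with $\int_{-1}^1 L_l\,d_sL_k\,ds=N_{k,l}$ from (\ref{eq:matN_kl}), so that $\int_{\kappa_j}f'(u_h)\phi_j^l d_x\phi_j^k\,dx=f'(u_L)N_{k,l}$, and the single surviving flux term $-\lambda_k\partial_{u^-}\hat{h}_{j+1/2}=-\lambda_k f'(u_L)$; this reproduces (\ref{eq:linear_operator_god}a). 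The subsonic case is symmetric, the surviving boundary contribution being proportional to $\partial_{u^+}\hat{h}_{j-1/2}=f'(u_R)$, and yields (\ref{eq:linear_operator_god}c). For $j=0$ both boundary terms of the diagonal block vanish, since $\partial_{u^-}\hat{h}_{1/2}=\partial_{u^+}\hat{h}_{-1/2}=0$, leaving only the volume integral (\ref{eq:linear_operator_god}b).

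The spectral reduction then follows from this pattern. Since ${\bf L}_{-1,0}={\bf L}_{1,0}=0$ and the tridiagonal structure forbids any other coupling out of $\kappa_0$, the shock cell feeds only into itself: the subspace $V_0$ of functions supported on $\kappa_0$ is invariant, with ${\bf L}|_{V_0}={\bf L}_0$. On the complementary quotient, indexed by the cells $j\neq0$, the induced operator splits as the direct sum of a supersonic part and a subsonic part, because supersonic and subsonic cells couple to $\kappa_0$ but not to each other once $\kappa_0$ is projected out; the supersonic part is the bidiagonal block-Toeplitz operator with constant diagonal block (\ref{eq:linear_operator_god}a) and rank-one subdiagonal block $S=\big((-1)^k\lambda_k f'(u_L)\big)_{k,l}$, and likewise the subsonic part has diagonal block (\ref{eq:linear_operator_god}c). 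Thus ${\bf L}$ is block upper-triangular with respect to $V_0\oplus(V_-\oplus V_+)$, and $\sigma({\bf L})=\sigma({\bf L}_0)\cup\sigma(\text{supersonic part})\cup\sigma(\text{subsonic part})$.

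The hard part will be identifying the spectra of the two infinite tails with those of their diagonal blocks ${\bf L}_{j<0}$ and ${\bf L}_{j>0}$. Block-triangularity yields the factorization $\det(\mu I-{\bf L})=\det(\mu I-{\bf L}_0)\prod_{j<0}\det(\mu I-{\bf L}_{j<0})\prod_{j>0}\det(\mu I-{\bf L}_{j>0})$ only at a formal level, because a bidiagonal block-Toeplitz tail generically carries spectrum beyond that of its diagonal block. I would make the reduction precise by working on finite truncations of $\Omega_h$, where the determinant is literally the product of the diagonal-block determinants and the three spectra are recovered exactly, treating the bi-infinite operator as the limiting object relevant to the stability analysis. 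If one insists on the bi-infinite operator, the point to settle is whether the rank-one block $S$ generates square-summable modes for $\mu\notin\sigma({\bf L}_{j<0})$: solving the supersonic recurrence $(\mu I-{\bf L}_{j<0})w_j=S\,w_{j-1}$ and using $\mathrm{rank}\,S=1$ collapses the decay requirement to a scalar geometric recursion whose summability I would analyse case by case. The remaining ingredients -- the trace identities $\sum_l U_j^l=u_{j+1/2}^-$ and $\sum_l(-1)^lU_j^l=u_{j-1/2}^+$ of (\ref{eq:LR_traces}), and the determinant evaluations -- are routine.
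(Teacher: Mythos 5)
Your proposal is correct and follows essentially the same route as the paper: use (\ref{eq:sign_condition_conv-flx}) to reduce the Godunov flux to the upwind values (\ref{eq:godunov-flx_interf_jc}), substitute the resulting one-sided derivatives $\partial_{u^+}\hat{h}=0$ (supersonic side) and $\partial_{u^-}\hat{h}=0$ (subsonic side) into (\ref{eq:linear_operator}), and read off the block-triangular structure, so that the spectrum reduces to that of the constant diagonal blocks (\ref{eq:linear_operator_god}). Your last paragraph is in fact more careful than the paper, which simply asserts that triangularity identifies the eigenvalues with those of the diagonal blocks and does not discuss the bi-infinite bidiagonal tails; your truncation argument makes precise the level at which that assertion holds.
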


\begin{proof}
Under assumption (\ref{eq:sign_condition_conv-flx}), the Godunov flux reduces to the upwind flux (\ref{eq:godunov-flx_interf_jc}). Therefore, for $j\neq 0$ the integral in (\ref{eq:linear_operator}b) reads 
\begin{equation*}
 \int_{\kappa_j}f'(u_h)\phi_j^ld_x\phi_j^kdx = f'(u_X)N_{k,l},
\end{equation*}

\noindent with $u_X=u_L$ for $j<0$ and $u_X=u_R$ for $j>0$. After simplification and omitting the double subscript for diagonal blocks, the linearized operator reduces to
\begin{equation*}
 ({\bf L}_{j,j-1})_{k,l} = (-1)^k\lambda_kf'(u_L), \; ({\bf L}_{j})_{k,l} = \delta_{k,l} + \lambda_k f'(u_L)(N_{k,l}-1), \;  {\bf L}_{j,j+1} = 0, \; j<0,
\end{equation*}
\begin{equation*}
 ({\bf L}_{0,-1})_{k,l} = (-1)^k\lambda_kf'(u_L), \; ({\bf L}_{0})_{k,l} = \delta_{k,l} + \lambda_k\int_{\kappa_{0}}f'(u_h)\phi_0^ld_x\phi_0^kdx,
\end{equation*}
\noindent $({\bf L}_{0,1})_{k,l}= -(-1)^l\lambda_kf'(u_R)$, and
\begin{equation*}
 {\bf L}_{j,j-1} = 0, \, ({\bf L}_{j})_{k,l} = \delta_{k,l} + \lambda_k f'(u_R)(N_{k,l}+(-1)^{k+l}), \,  ({\bf L}_{j,j+1})_{k,l} = -(-1)^l\lambda_kf'(u_R),
\end{equation*}
\noindent for $j>0$. Therefore, ${\bf L}$ is lower block triangular for $j\leq 0$ and upper block triangular for $j>0$. The eigenvalues of the matrix are thus the eigenvalues of the diagonal blocks which are constant and equal to (\ref{eq:linear_operator_god}).
\end{proof}


%
\subsection{Application to the Burgers equation}\label{sec:ana_stab_god_burgers}

As an application, Table~\ref{tab:eigenvalues} gives the eigenvalues of the three different blocks in (\ref{eq:linear_operator_god}) for polynomial approximations $0\leq p\leq2$ in the case of the Burgers equation $f(u)=\tfrac{u^2}{2}$. We stress that eigenvalues in blocks $j\neq0$ remain valid for a general physical flux $f$.

\begin{table}
     \begin{center}
     \caption{Eigenvalues of the iteration matrix (\ref{eq:linear_operator}) associated to the DG scheme with the Godunov numerical flux and a forward Euler time integration for the discretization of the Burgers equation. Here, $\lambda_L=\lambda f'(u_L)$, $\lambda_R=\lambda f'(u_R)$, $\ol{\lambda}=u_L\lambda$, $\gamma_1=3^{\tfrac{2}{3}}-3^{\tfrac{1}{3}}$, and $\gamma_2=3^{\tfrac{7}{6}}+3^{\tfrac{5}{6}}$.}
     \begin{tabular}{|c|c|c|c|c|}
	\hline
	$p$ & $s_c$ & $j<0$ & $j=0$ & $j>0$ \\
	\hline
	$0$ & $(-1,1)$ & $1-\lambda_L$ & $1$ & $1+\lambda_R$ \\
	\hline
	$1$ & $(-1,1)$ & $1-\lambda_L(2\pm i\sqrt{2})$ & $1$, $1-2\ol{\lambda}\sqrt{3(1-s_c^2)}$ & $1+\lambda_R(2\pm i\sqrt{2})$ \\
	\hline
	\multirow{7}{*}{$2$} & \multirow{2}{*}{$(-1,-\tfrac{2}{3})$} & \multirow{2}{*}{$1-\lambda_L(3+\gamma_1)$,} & $1$, $1\pm2i\ol{\lambda}\times\dots$ & \multirow{2}{*}{$1+\lambda_R(3+\gamma_1)$,} \\
	 & & & $\sqrt{-3s_c\sqrt{5(1-s_c^2)}-6(1-s_c^2)}$ & \\
	\cline{2-2} \cline{4-4}
	 & \multirow{2}{*}{$(-\tfrac{2}{3},\tfrac{2}{3})$} & \multirow{2}{*}{$1-\frac{\lambda_L}{2}\big(6-\gamma_1$} & $1$, $1-\ol{\lambda}\sqrt{3(4-9s_c^2)}$, & \multirow{2}{*}{$1+\frac{\lambda_R}{2}\big(6-\gamma_1$} \\
	 & & & $1-2\ol{\lambda}\sqrt{3(4-9s_c^2)}$ & \\
	\cline{2-2} \cline{4-4}
	 & \multirow{2}{*}{$(\tfrac{2}{3},1)$} & \multirow{2}{*}{$\pm\gamma_2 i\big)$} & $1$, $1\pm2i\ol{\lambda}\times\dots$ & \multirow{2}{*}{$\pm\gamma_2 i\big)$} \\
	 & & & $\sqrt{3s_c\sqrt{5(1-s_c^2)}-6(1-s_c^2)}$ & \\
	\hline
    \end{tabular}
    \label{tab:eigenvalues}
    \end{center}
\end{table}


Stability in the shock cell imposes $\ol{\lambda}=u_L\lambda\leq(3(1-s_c^2))^{-1/2}$ for $p=1$, $\ol{\lambda}\leq(3(4-9s_c^2))^{-1/2}$ for $p=2$ and $|s_c|<\tfrac{2}{3}$, but the linearized operator (\ref{eq:linear_operator}) is unconditionally unstable for $p=2$ and $|s_c|>\tfrac{2}{3}$. The DG method is indeed unstable for $p\geq1$ and a forward Euler method. These modes may be stabilized by using Runge-Kutta schemes of sufficient order (see below and section~\ref{sec:num_xp_stab}). For $p=2$ and $s_c\in(-1,-\tfrac{2}{3})$, the real part of eigenvectors associated to the unstable eigenvalues has ${\bf r}=(0,0,2\sqrt{5(1-s_c^2)}+5s_c)^\top$ as components in the basis of ${\cal P}_2(\kappa_{0})$. For $p=2$ and $s_c\in(\tfrac{2}{3},1)$, their real part reads ${\bf r}=(0,0,2\sqrt{5(1-s_c^2)}-5s_c)^\top$. They only affect the highest DOF and reach largest values at faces of the shock cell.

Let us consider the linear stability of situations violating assumption (\ref{eq:sign_condition_conv-flx}) according to section~\ref{sec:comments_trace_ineq}. For these points, Table~\ref{tab:eigenvalues_sc_pm1} displays the eigenvalues and eigenvectors in the shock cell ${\bf L}_0$. We observe that for $s_c=\pm1$ and $1\leq p\leq2$, $\mu=1$ is a non semisimple eigenvalue of ${\bf L}$. Indeed, $\lambda=\tfrac{\Delta t}{h}>0$ hence $\mu=1$ is not an eigenvalue of blocks ${\bf L}_{j<0}$ or ${\bf L}_{j>0}$ (see Table~\ref{tab:eigenvalues}), but it is an eigenvalue of ${\bf L}_{0}$ with algebraic multiplicity $p+1$ and geometric multiplicity of $1$ as indicated in Table~\ref{tab:eigenvalues_sc_pm1}. This property holds for $p=3$ because eigenvalues of ${\bf L}_{j<0}$ and ${\bf L}_{j>0}$ satisfy $\mu\neq1$. Indeed, they are of the form $1-\lambda_L\mu_l$ and $1+\lambda_R\mu_l$, respectively, with $\mu_l\in\{-4+(\gamma_3-4)^{1/2}\pm\gamma_4^+,-4-(\gamma_3-4)^{1/2}\pm\gamma_4^-\}$, $\gamma_3=10^{2/3}(i\sqrt{6}-2)^{-1/3}+(10(i\sqrt{6}-2))^{1/3}$ and $\gamma_4^\pm=i(8+\gamma_3\pm8(\gamma_3-4)^{-1/2})^{1/2}$. Note that the eigenvector associated to $\mu=1$ has components along the highest DOF only. This property will be used in section~\ref{sec:SVM} for stabilizing the DG scheme.

Table~\ref{tab:eigenvalues_sc_pm1} also gives the eigenvalues and eigenvectors for shock positions $-1<s_c<1$ where the strict inequality in assumption (\ref{eq:sign_condition_conv-flx}) is violated. This situation occurs for $p=2$ and $p=3$ as highlighted in Theorem~\ref{th:shock_sol_p0123}. Though different from results for $s_c=\pm1$, these results show that $\mu=1$ is also a non semisimple eigenvalue and the DG scheme will be unstable. At this time, a general result about instability has not been achieved and is beyond the scope of the present study.

\begin{table}
     \begin{center}
     \caption{Eigenvalues and eigenvectors in the shock cell of the iteration matrix associated to the DG scheme with the Godunov numerical flux and a forward Euler time integration for the discretization of the Burgers equation. Here, $\ol{\lambda}=u_L\lambda$}
     \begin{tabular}{|c|c|c|c|}
	\hline
	$p$ & $s_c$ & eigenvalues & eigenvectors \\
	\hline
	$1$ & $\pm1$ & $\{1,1\}$ & $\begin{pmatrix}0&0\\1&0\end{pmatrix}$ \\
	\hline
	\multirow{4}{*}{$2$} & $\pm1$ & $\{1,1,1\}$ & $\begin{pmatrix}0&0&0\\0&0&0\\1&0&0\end{pmatrix}$ \\
	\cline{2-4}
	 & $\pm\tfrac{2}{3}$ & $\{1,1,1\}$ & $\begin{pmatrix}\tfrac{1}{2}&0&0\\0&1&0\\1&0&0\end{pmatrix}$ \\
	\hline
	\multirow{5}{*}{$3$} & $\pm1$ & $\{1,1,1,1\}$ & $\begin{pmatrix}0&0&0&0\\ 0&0&0&0 \\0&0&0&0 \\1&0&0&0\end{pmatrix}$ \\
	\cline{2-4}
	& $\pm\tfrac{1}{6}$ & $\{1,1,1,1-\tfrac{10\ol{\lambda}}{\sqrt{3}}\}$ & 
$\begin{pmatrix}\mp\tfrac{5}{9\sqrt{3}}&0&0&0\\ \tfrac{8}{27}&\mp\tfrac{\sqrt{3}}{3}&0&0 \\0&1&0&\mp\tfrac{3\sqrt{3}}{7} \\1&0&0&1\end{pmatrix}$ \\
	\hline
    \end{tabular}
    \label{tab:eigenvalues_sc_pm1}
    \end{center}
\end{table}

Stability of the DG scheme requires high-order Runge-Kutta schemes \cite{cockburn-shu01}. However, the scheme may remain linearly unstable at points violating assumption (\ref{eq:sign_condition_conv-flx}). As an example, consider the second-order and strong stability preserving Heun scheme whose linearized operator reads ${\bf L}_{RK2}={\bf L}+\tfrac{1}{2}({\bf L}-{\bf I})^2$. Solutions of equation $\mu_{RK2}=\mu+\tfrac{1}{2}(\mu-1)^2=1$ are $\mu=\pm1$. We note that ${\bf L}$ is lower triangular by blocks for rows $j<0$ and upper triangular by blocks for rows $j>0$, so is ${\bf L}_{RK2}$. As a consequence, the stability of ${\bf L}_{RK2}$ reduces to the stability of its diagonal blocks. Moreover, the transformation from ${\bf L}$ to ${\bf L}_{RK2}$ results on the same operation of its diagonal blocks. Now, we note that $\mu=\pm1$ is not a root of $\mu_{RK2}=1$ for blocks ${\bf L}_{j<0}$ or ${\bf L}_{j>0}$ for $p\leq3$. Indeed, according to Table~\ref{tab:eigenvalues} and precedent remarks, the only roots are $\lambda_L=\lambda f'(u_L)=2$ for $p=0$ and $\lambda_L=2/(3+\gamma_1)\simeq0.5498$ for $p=2$ which do not satisfy the usual CFL condition $\max\{|\lambda_L|,|\lambda_R|\}<1/(2p+1)$ \cite{cockburn-shu01}. The same analysis holds for $\lambda_R=\lambda f'(u_R)$.


%
%
\section{Numerical experiments}\label{sec:num_xp}

We consider the inviscid Burgers equation, $f(u)=\tfrac{u^2}{2}$, over $\Omega=[0,1]$ with boundary conditions $u(0)=-u(1)=1$. The numerical solution is obtained by using a method of lines. The semi-discrete equation (\ref{eq:discr_var_form}) is advanced in time by means of an explicit third-order and strong stability preserving Runge-Kutta method \cite{shu-osher88}. We look for steady-state solutions of  (\ref{eq:discr_var_form}) of the form $u_h(x)=\lim_{n\rightarrow\infty}u_h(x,n\Delta t)$. The time step is set at
\begin{equation*}
 \Delta t = CFL\times\min\Big\{\frac{h}{\max_{v\in\kappa_j}|f'(v)|}:\;\kappa_j\in\Omega_h\Big\},
\end{equation*}
\noindent where $CFL=1/(2p+1)$ according to \cite{cockburn-shu01} and the maximum eigenvalue is evaluated at quadrature points of the element $\kappa_j$.

As suggested in \cite{lerat_13}, the final shock position for the Burgers equation is set through the following initial condition
\begin{equation*}
 u_0(x) = \left\{
\begin{array}{ll}
  1 - 2(1-\ol{u})x, &\mbox{if } x < \frac{1}{2},\\
  2\ol{u}+1-2(\ol{u}+1)x, &\mbox{if } x \geq \frac{1}{2},
\end{array}
\right.
\end{equation*}

\noindent with $-2<\ol{u}<2$. Using (\ref{eq:global_cons_edp}), we obtain $x_c=1/2+\ol{u}/4$.

We note that the evaluation of the volume integral in (\ref{eq:local_residuals}) is done by using Gauss quadrature which may be inexact for a nonlinear flux. The present results have been obtained by using a numerical quadrature of sufficient order to integrate it exactly in the case of the Burgers equation: $p+1$ Gauss-Legendre points are used for $p\leq2$ and $p+2$ points are used for $p=3$. The extra point for $p=3$ allows to quantitatively compare the solution of the numerical scheme obtained from the theoretical analysis in Theorem~\ref{th:shock_sol_p0123} with the solution obtained from a numerical calculation. 

\subsection{Structure of steady shock profiles}\label{sec:num_xp_prof}

Figures \ref{fig:solutions_p1} to \ref{fig:solutions_p3} display the steady-state solutions to the DG scheme obtained with the Godunov flux (\ref{eq:godunov_flux}) for nine different shock positions (\ref{eq:def_sc}). We compare solutions obtained from the theoretical analysis in Theorem~\ref{th:shock_sol_p0123} with the solution obtained from a numerical calculation with $N=20$ cells and $j_c=11$. Solutions are also compared to the exact solution (\ref{eq:exact_sol}). 


The solution remains uniform in the supersonic and subsonic regions $x<x_{-1/2}$ and $x>x_{1/2}$ whatever the polynomial degree which confirms the conclusions from Theorems~\ref{th:sol_super_zone_conv_flx} and \ref{th:sol_sub_zone_conv_flx}. However, for $p=2$ and $s_c=1$ or for $p=3$ and $s_c=\pm1$, the solution from the numerical calculation appears to be oscillatory outside the shock cell and differs from the expected predictions. In all these situations, the calculations did not succeed in converging to a steady state and therefore do not satisfy the discrete scheme (\ref{eq:steady_discrete_scheme}). The analysis in section~\ref{sec:ana_stab_god_burgers} predicts instability in situations $s_c=\pm1$ for $1\leq p\leq3$, $s_c=\pm\tfrac{2}{3}$ for $p=2$, and  $s_c=\pm\tfrac{1}{6}$ for $p=3$. The analysis was limited to a second-order Runge-Kutta scheme, but results of section~\ref{sec:num_xp_stab} show that it holds for a third-order scheme. We recall that assumption (\ref{eq:sign_condition_conv-flx}) may be violated for $p\geq1$ at these points (see Theorem~\ref{th:shock_sol_p0123} and Appendix~\ref{app:proof_sol_p123}) where the Godunov flux admits two equal values $\hat{h}_{-1/2}=f(u_{-1/2}^\pm)$ or $\hat{h}_{1/2}=f(u_{1/2}^\pm)$ (see Figure~\ref{fig:evol_coeffs_p1-3}(d)). Both values may be solutions of the numerical scheme as long as that the conservation of the scheme (\ref{eq:steady_local_cons}) is respected. Our numerical experiments tend to show that, when converging to the steady state, the Godunov flux changes for a solution to another following a cyclic pattern with the consequence that the flux balance in the cell is periodically modified (see Figure~\ref{fig:residuals_p2}(b)). In some cases, {\it e.g.}, $p=1$ and $s_c=\pm1$ or $p=2$ and $s_c=-1$, the convergence to steady state was reached but at very low speed.   

\begin{figure}
\begin{center}
\subfigure[$s_c=-1$           ]{\epsfig{figure=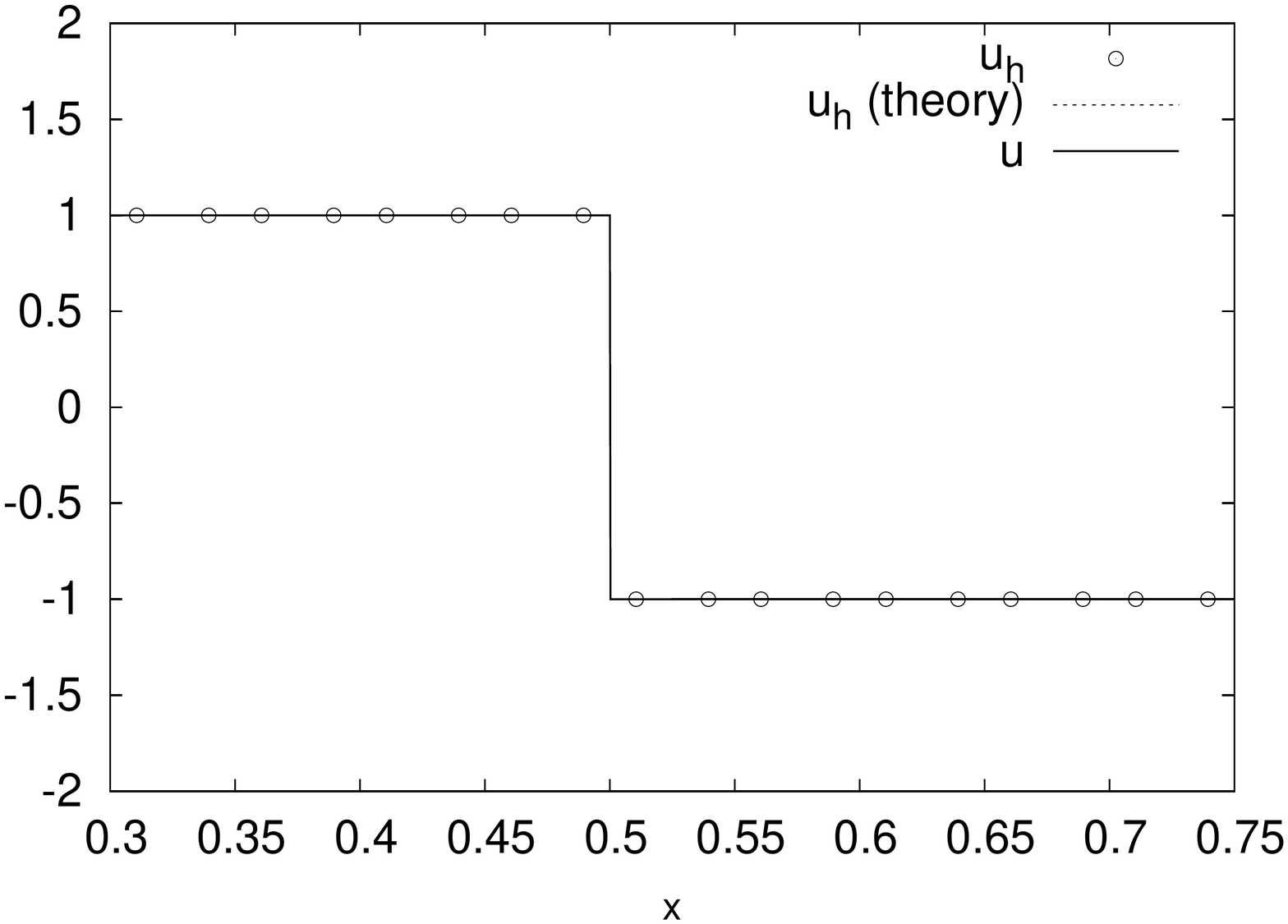 ,width=4.8cm}}\hspace{-0.8cm}
\subfigure[$s_c=-\tfrac{4}{5}$]{\epsfig{figure=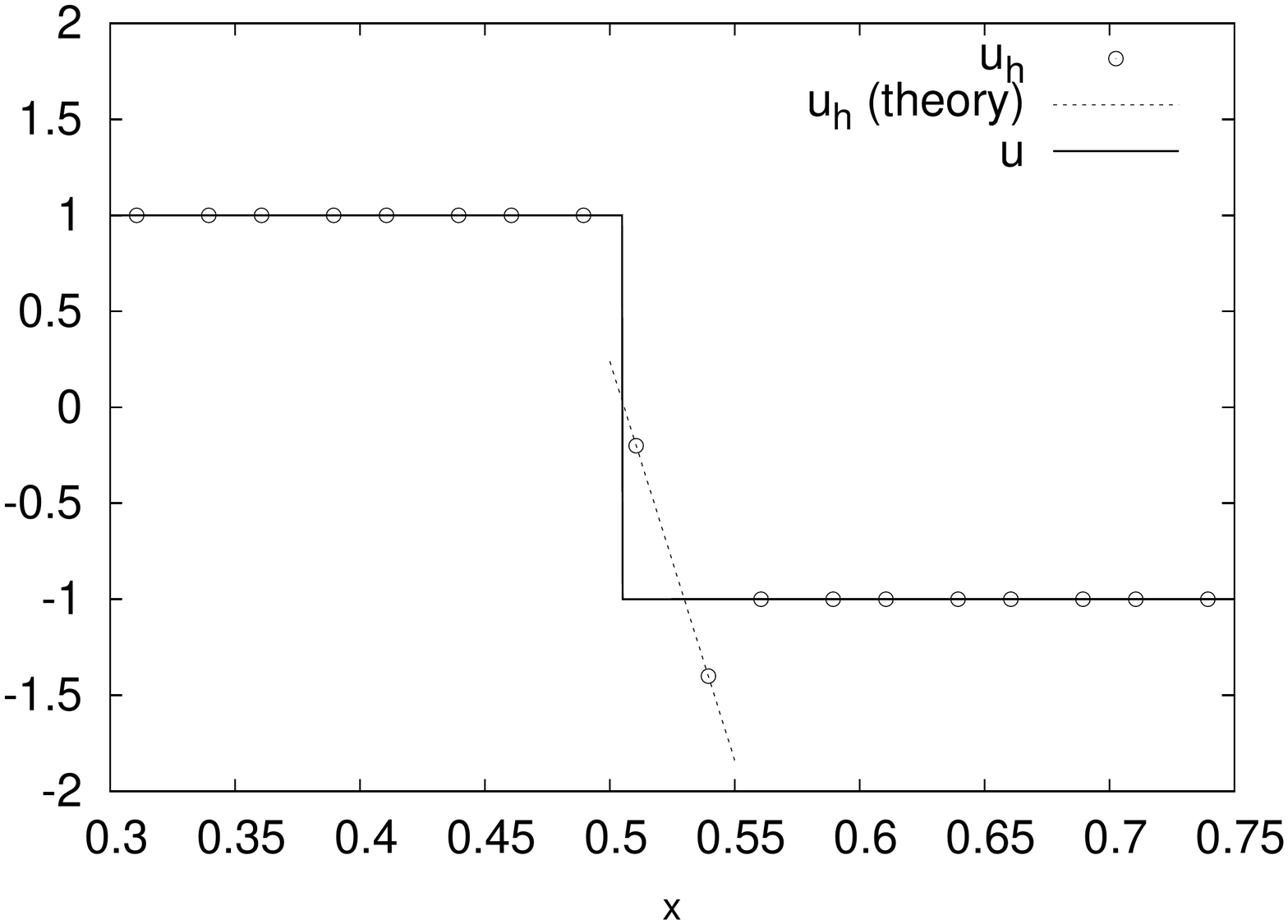 ,width=4.8cm}}\hspace{-0.8cm}
\subfigure[$s_c=-\tfrac{3}{5}$]{\epsfig{figure=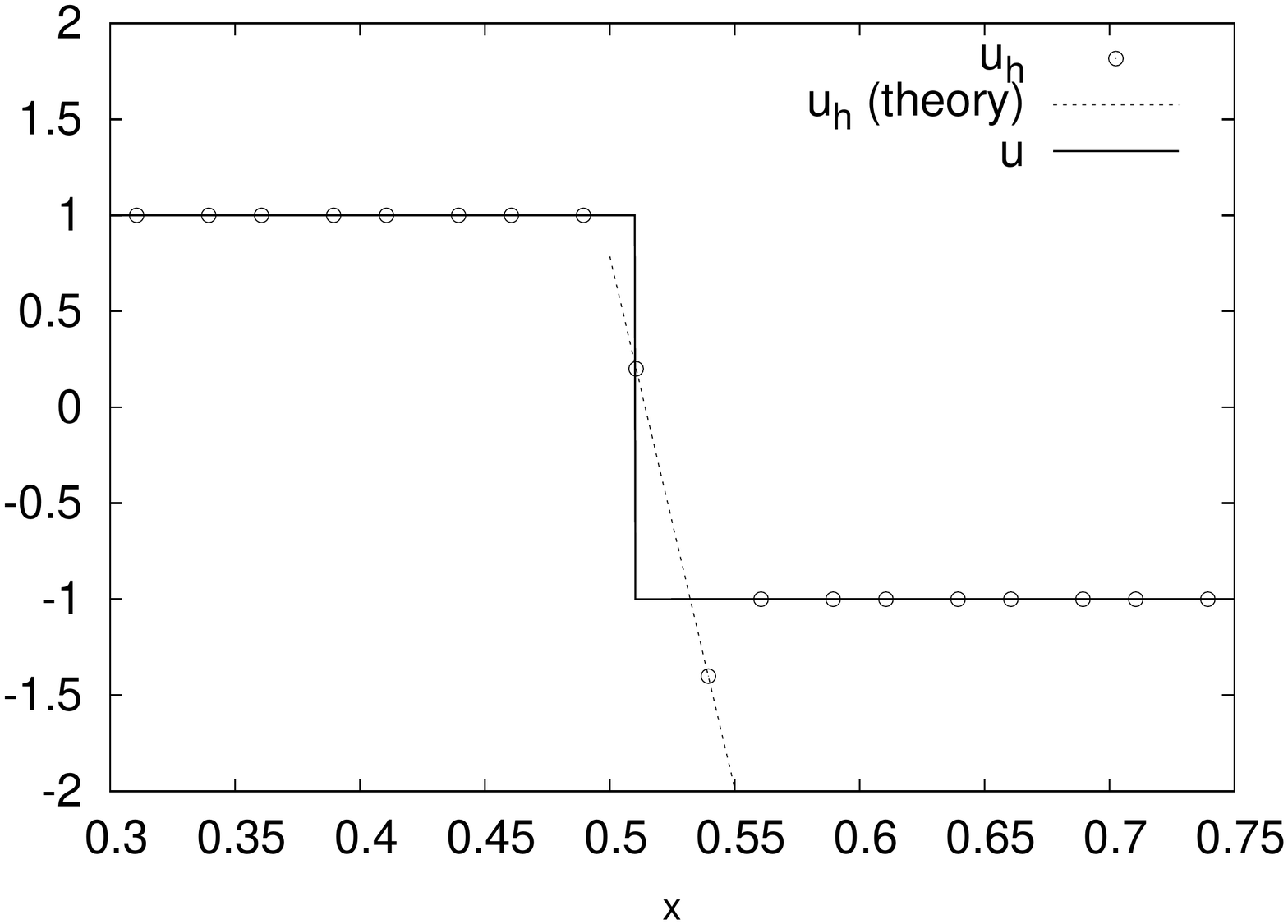 ,width=4.8cm}}\\
\subfigure[$s_c=-\tfrac{2}{5}$]{\epsfig{figure=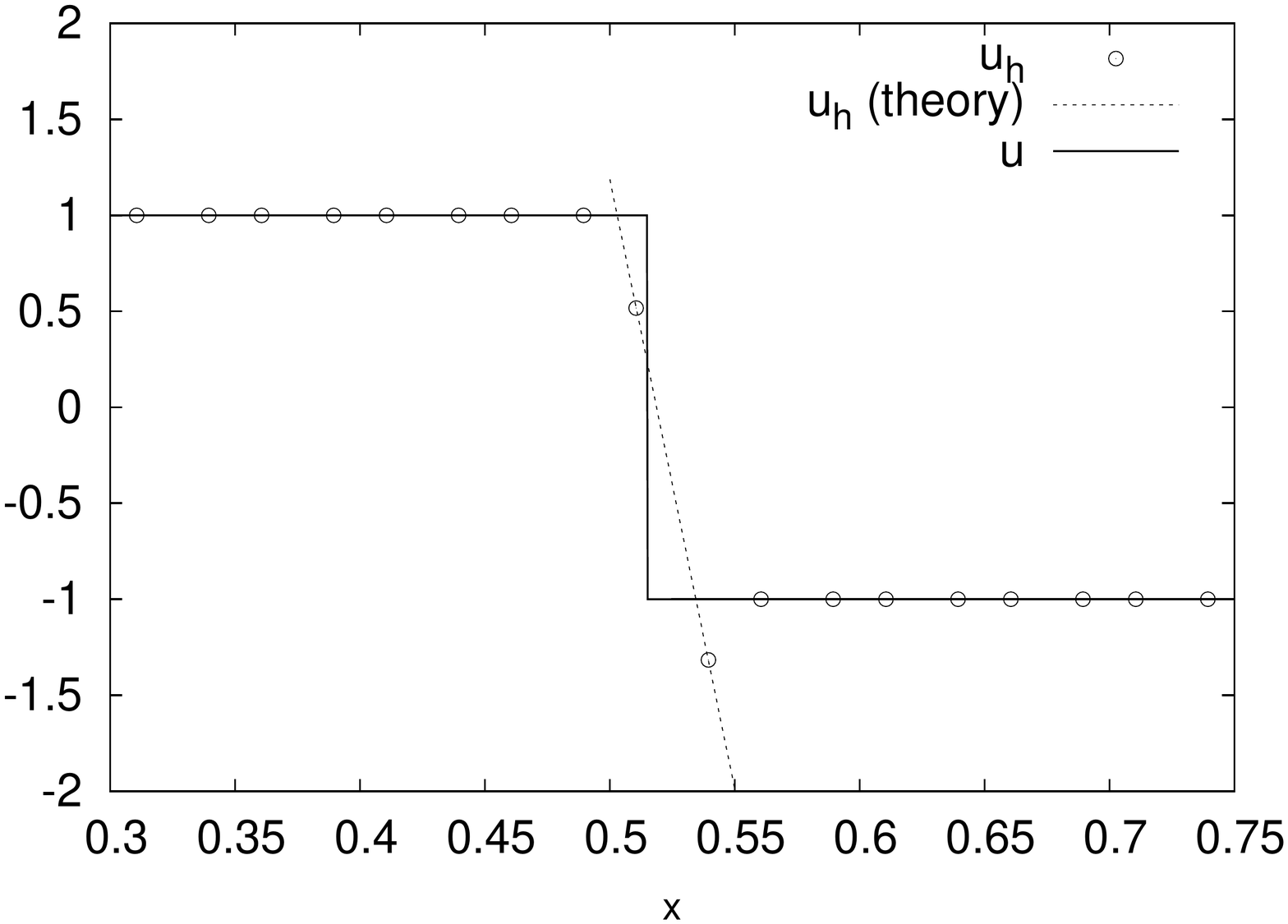 ,width=4.8cm}}\hspace{-0.8cm}
\subfigure[$s_c=0$            ]{\epsfig{figure=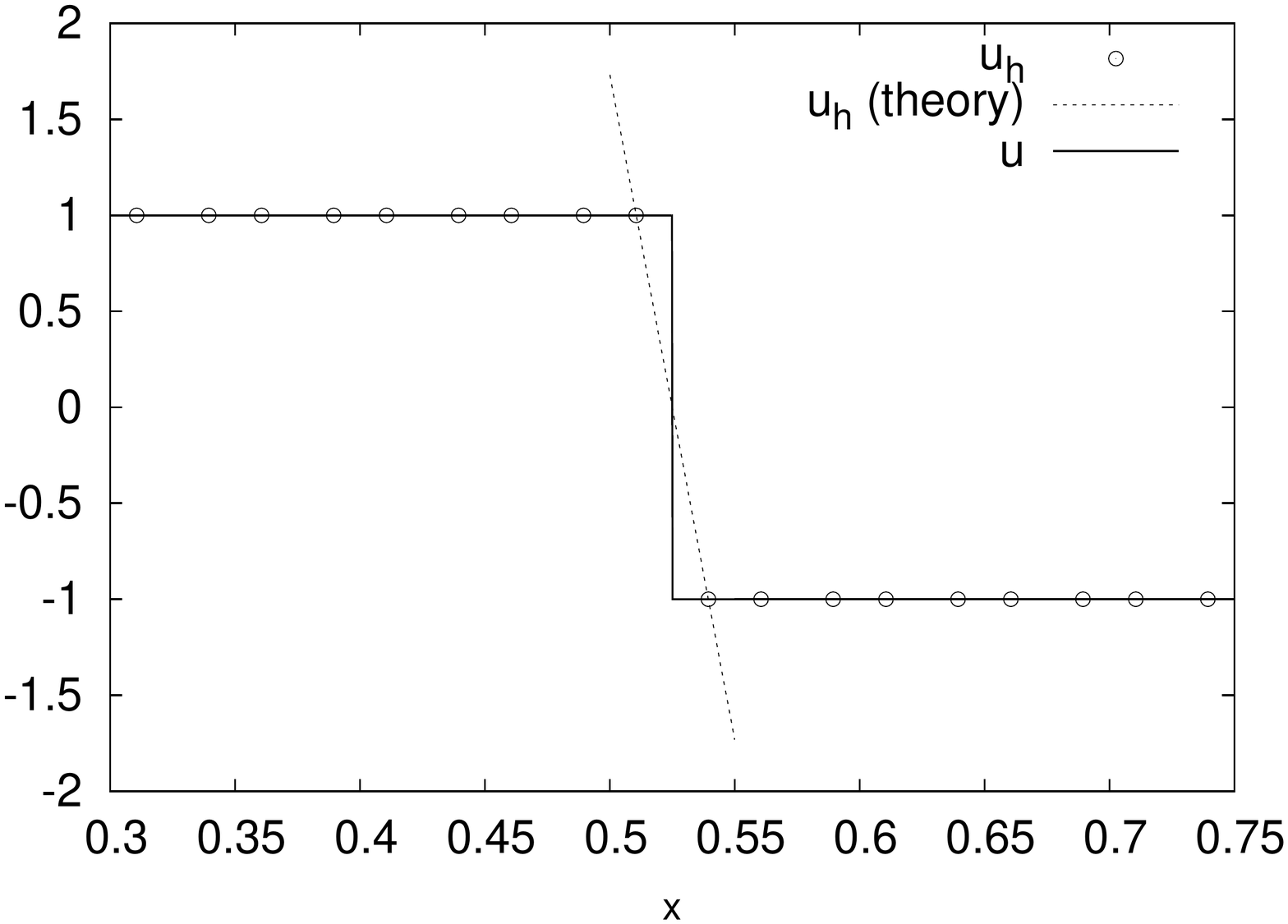 ,width=4.8cm}}\hspace{-0.8cm}
\subfigure[$s_c= \tfrac{2}{5}$]{\epsfig{figure=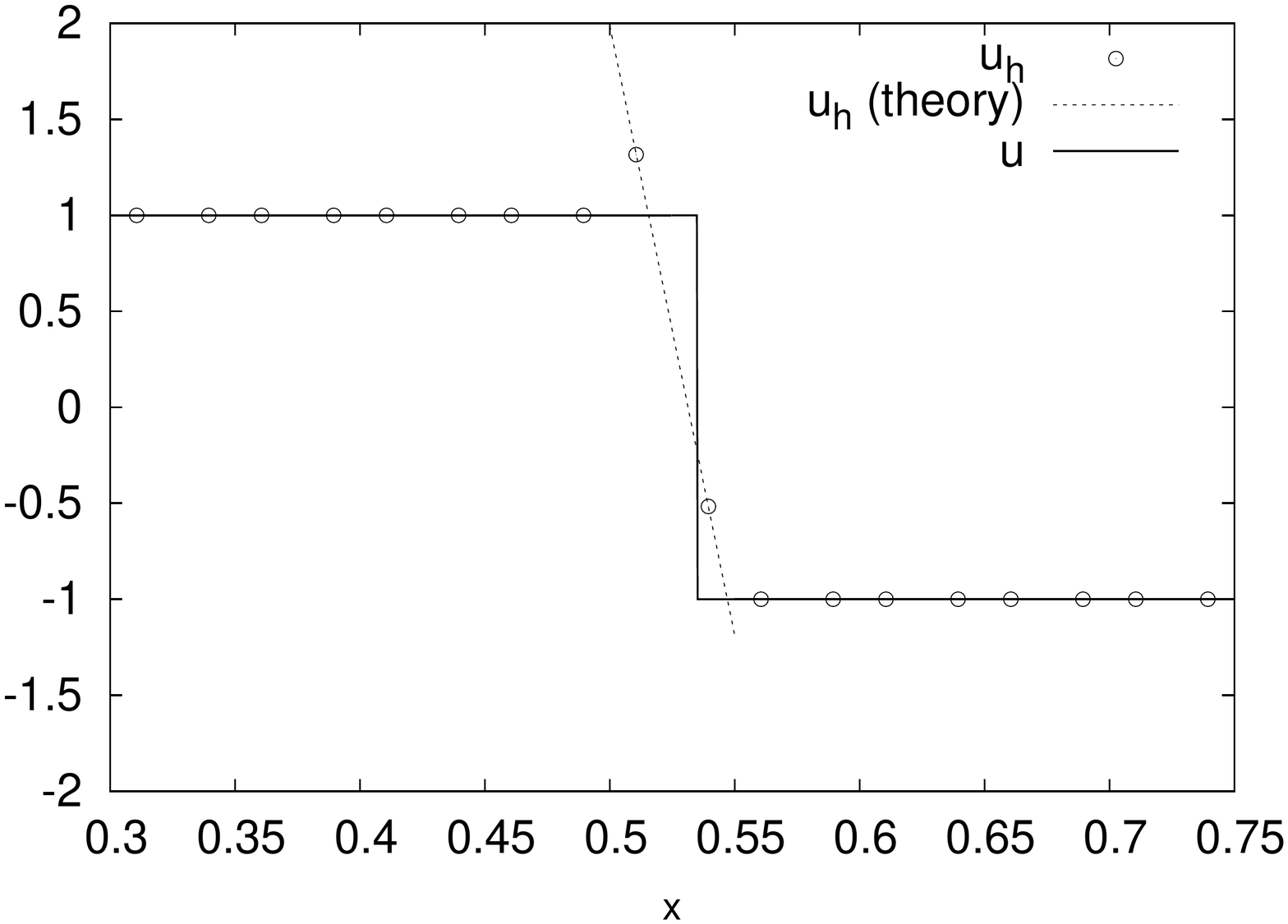 ,width=4.8cm}}\\
\subfigure[$s_c= \tfrac{3}{5}$]{\epsfig{figure=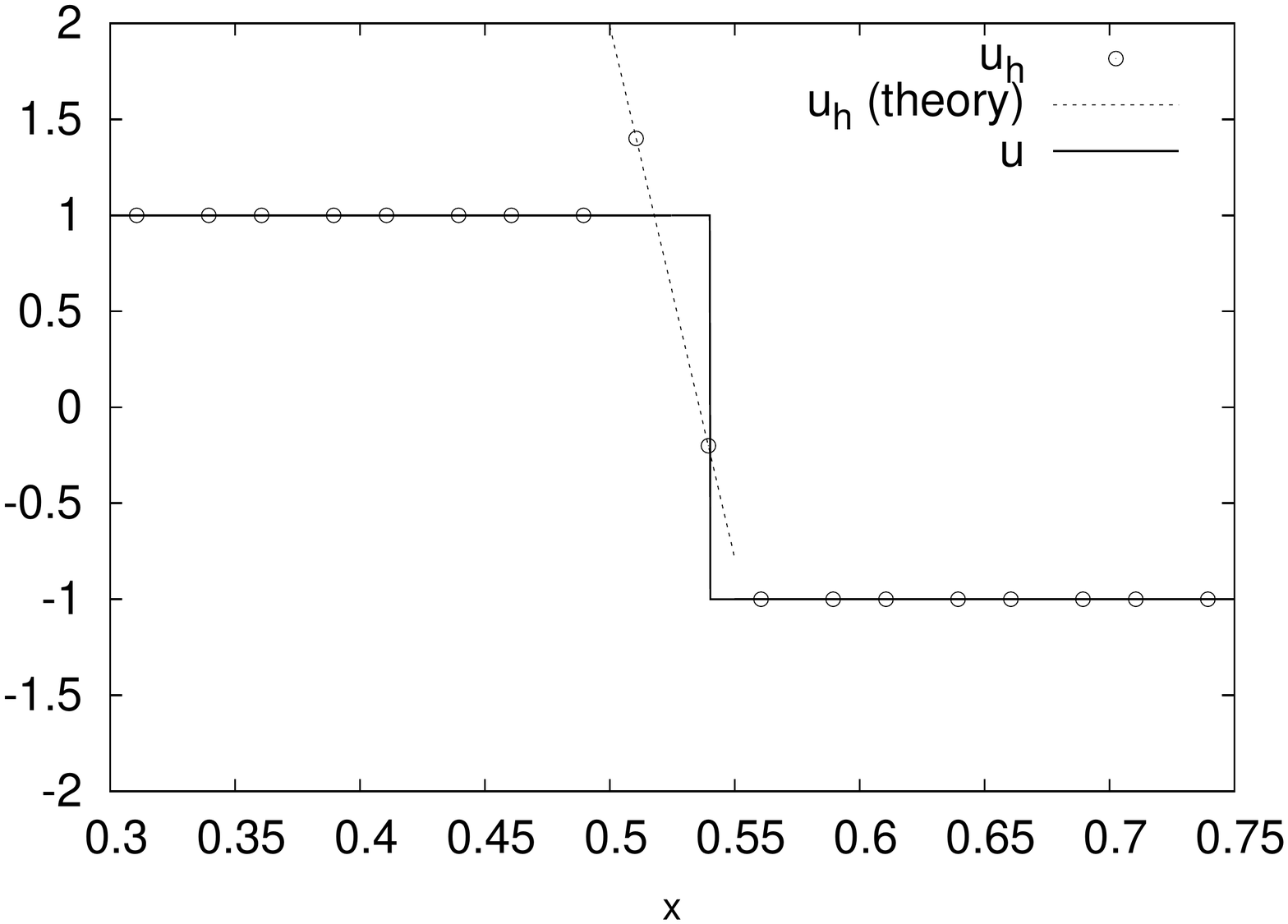 ,width=4.8cm}}\hspace{-0.8cm}
\subfigure[$s_c= \tfrac{4}{5}$]{\epsfig{figure=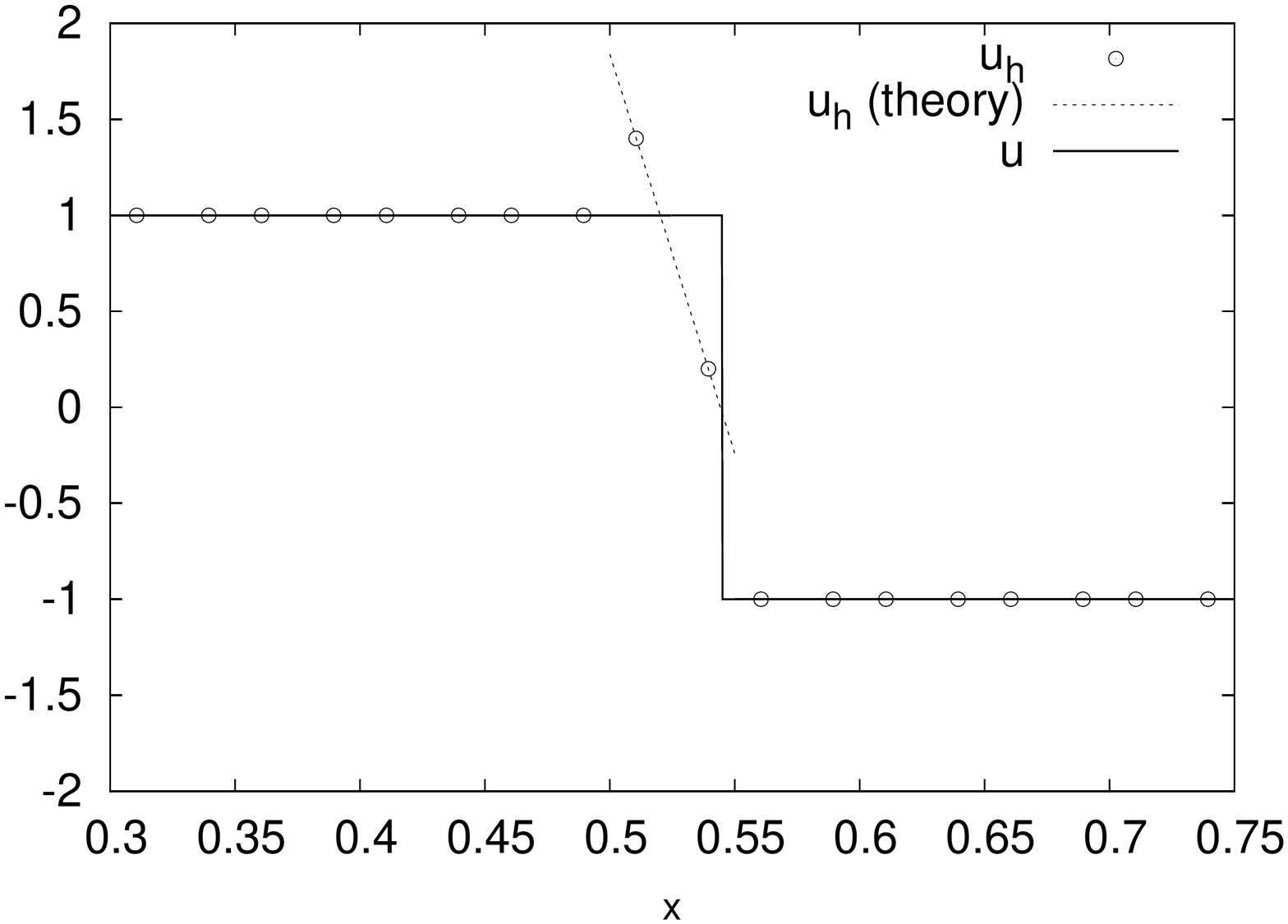 ,width=4.8cm}}\hspace{-0.8cm}
\subfigure[$s_c=1$            ]{\epsfig{figure=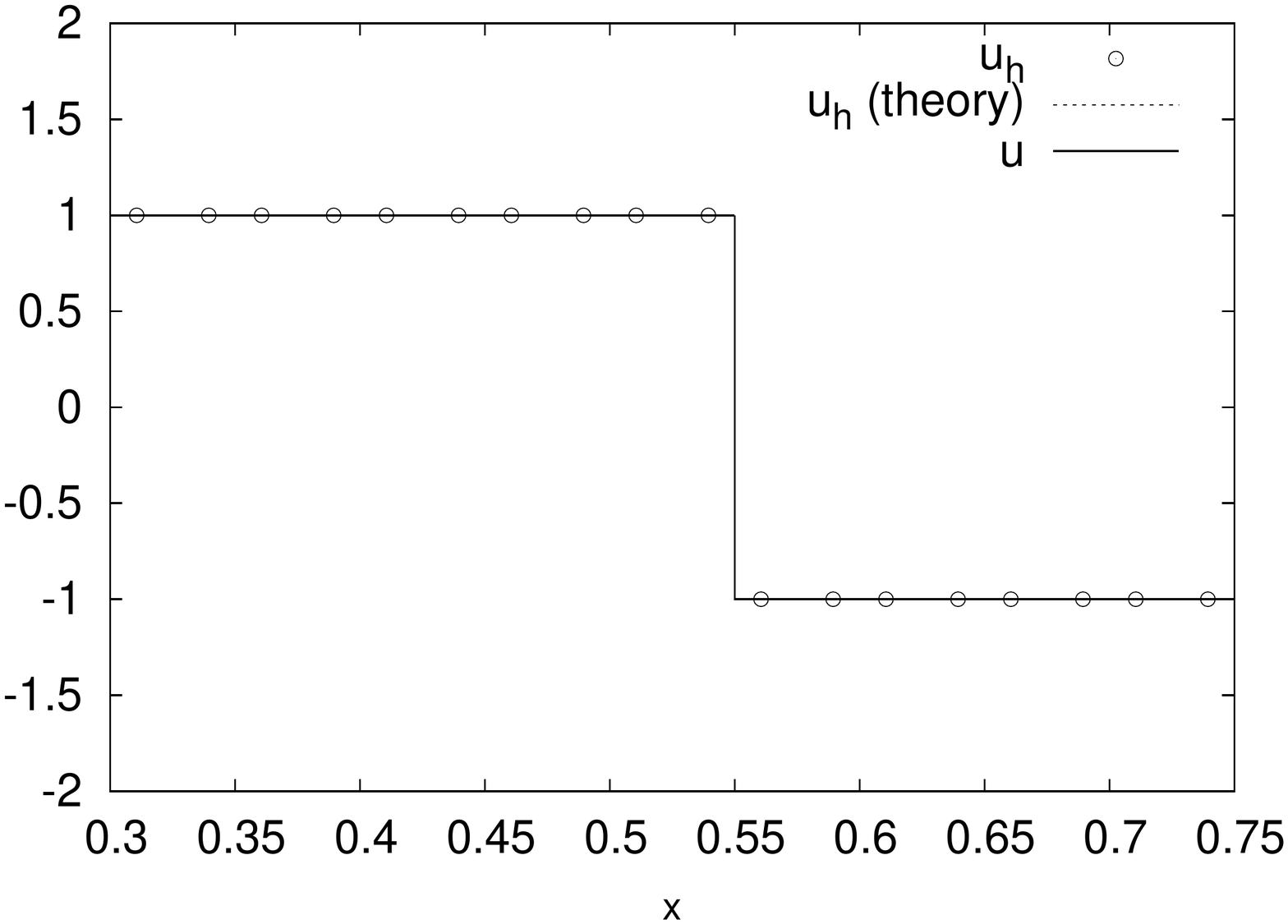 ,width=4.8cm}}
\caption{Steady-state solutions in the shock cell $\kappa_0$ for $p=1$ and the Godunov flux (\ref{eq:godunov_flux}).}
\label{fig:solutions_p1}
\end{center}
\end{figure}

The solution oscillates in the shock region. The theoretical solutions of Theorem~\ref{th:shock_sol_p0123} agree very well with the numerical calculations. The oscillations present amplitude lower than two times the shock strength $u_L-u_R$. Likewise, for $p=2$ and $p=3$ the oscillations of the polynomial solution in the shock cell may lead to nonphysical solutions where the sign of the eigenvalues $f'(u_h)=u_h$ changes sign locally more than one time.

\begin{figure}
\begin{center}
\subfigure[$s_c=-1$           ]{\epsfig{figure=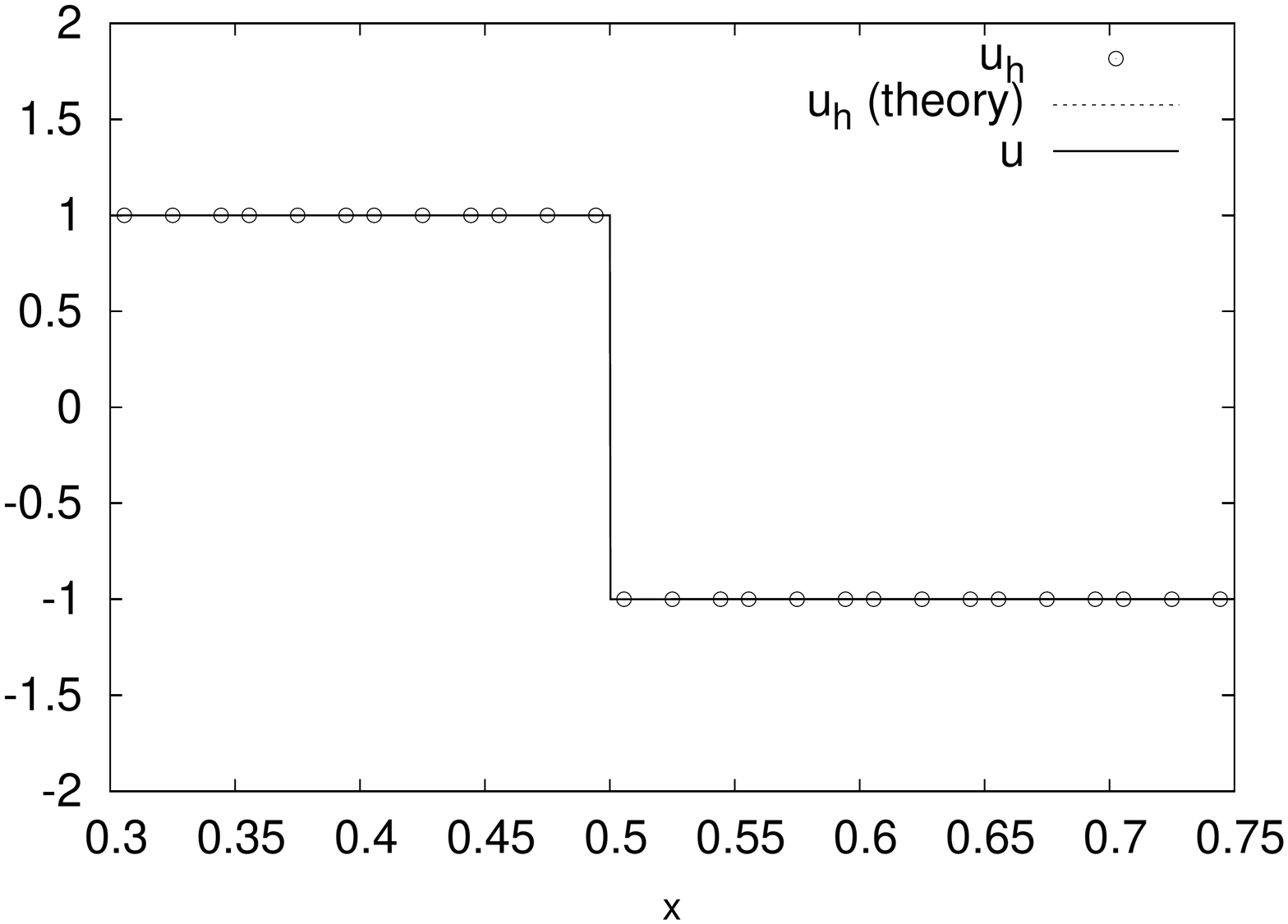 ,width=4.8cm}}\hspace{-0.8cm}
\subfigure[$s_c=-\tfrac{4}{5}$]{\epsfig{figure=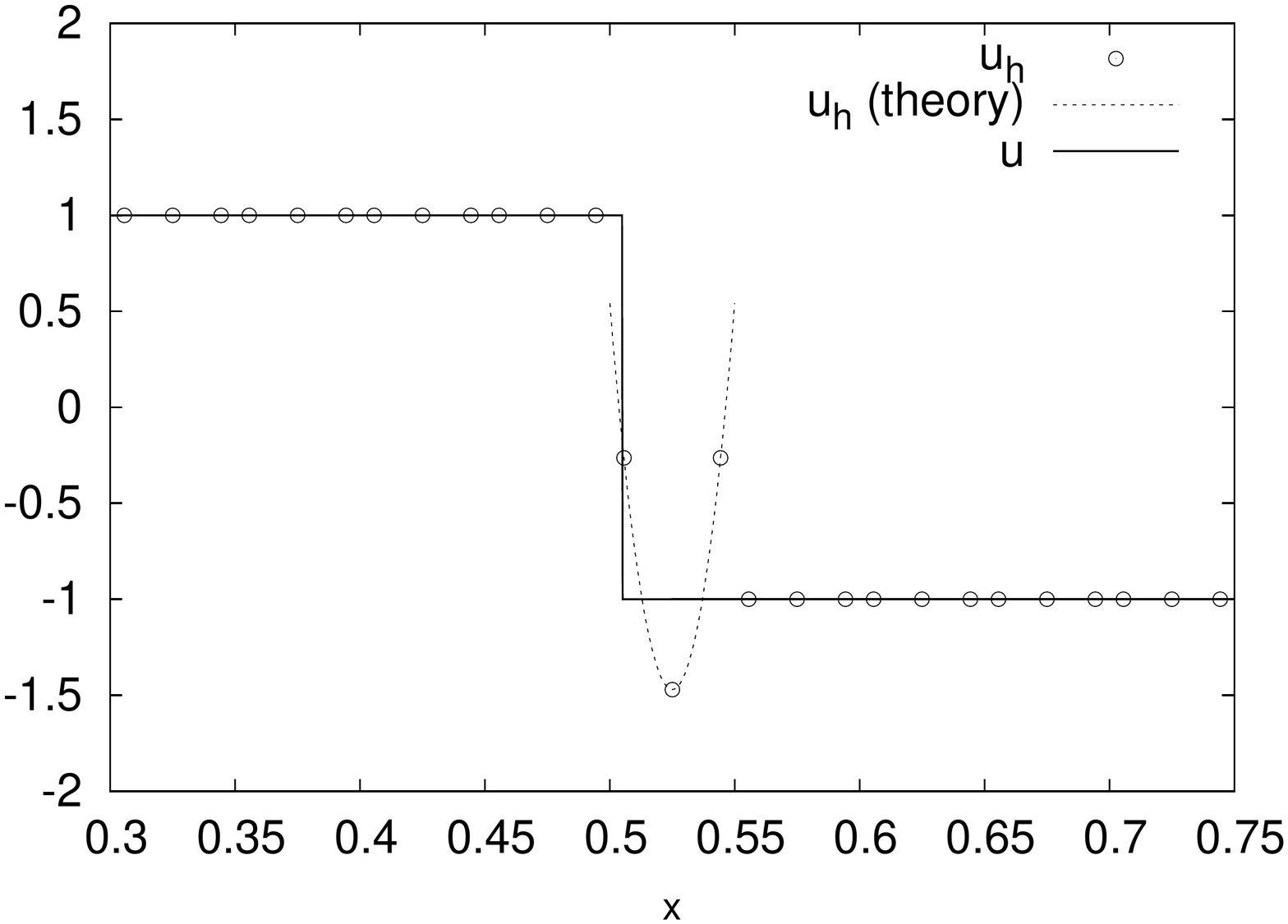 ,width=4.8cm}}\hspace{-0.8cm}
\subfigure[$s_c=-\tfrac{3}{5}$]{\epsfig{figure=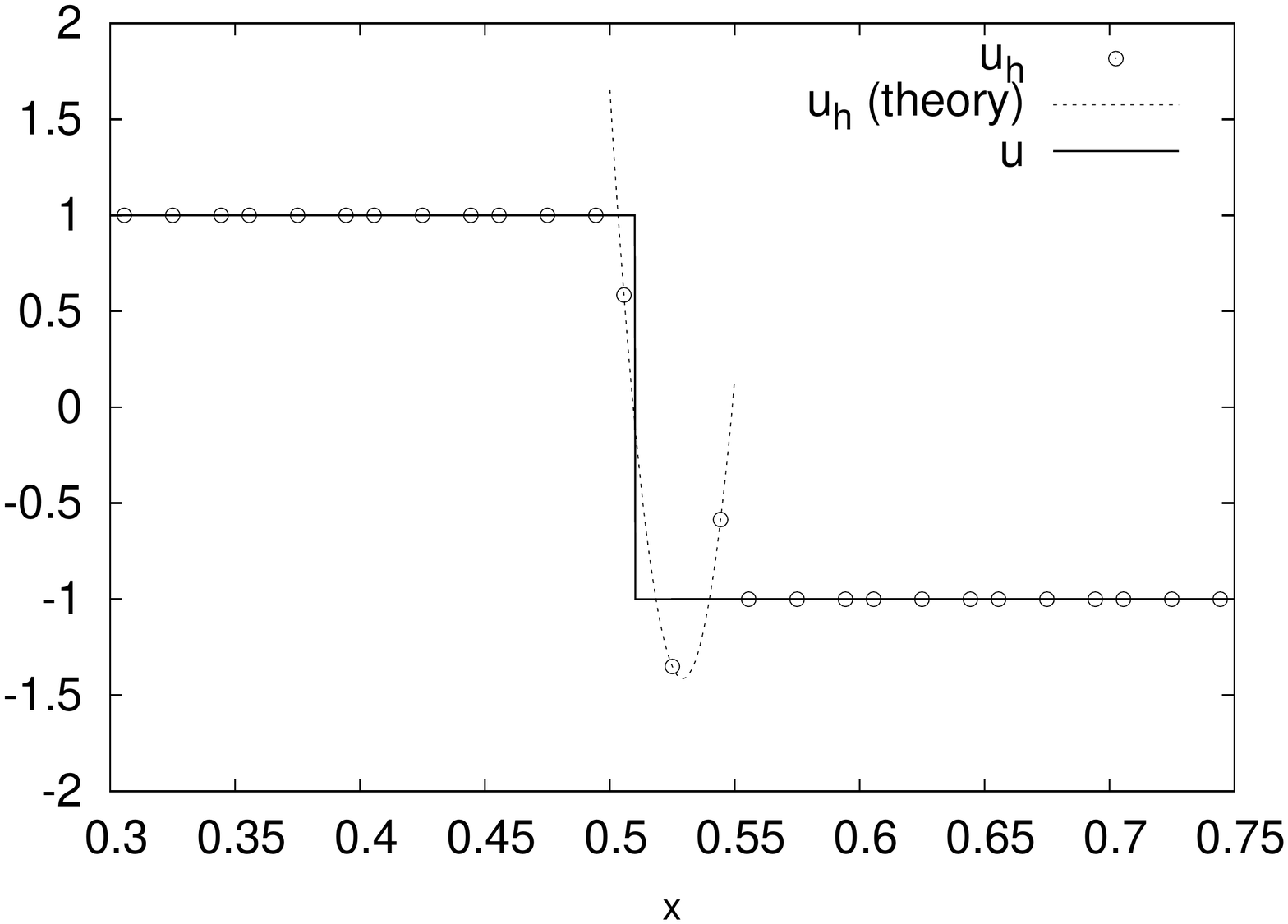 ,width=4.8cm}}\\
\subfigure[$s_c=-\tfrac{2}{5}$]{\epsfig{figure=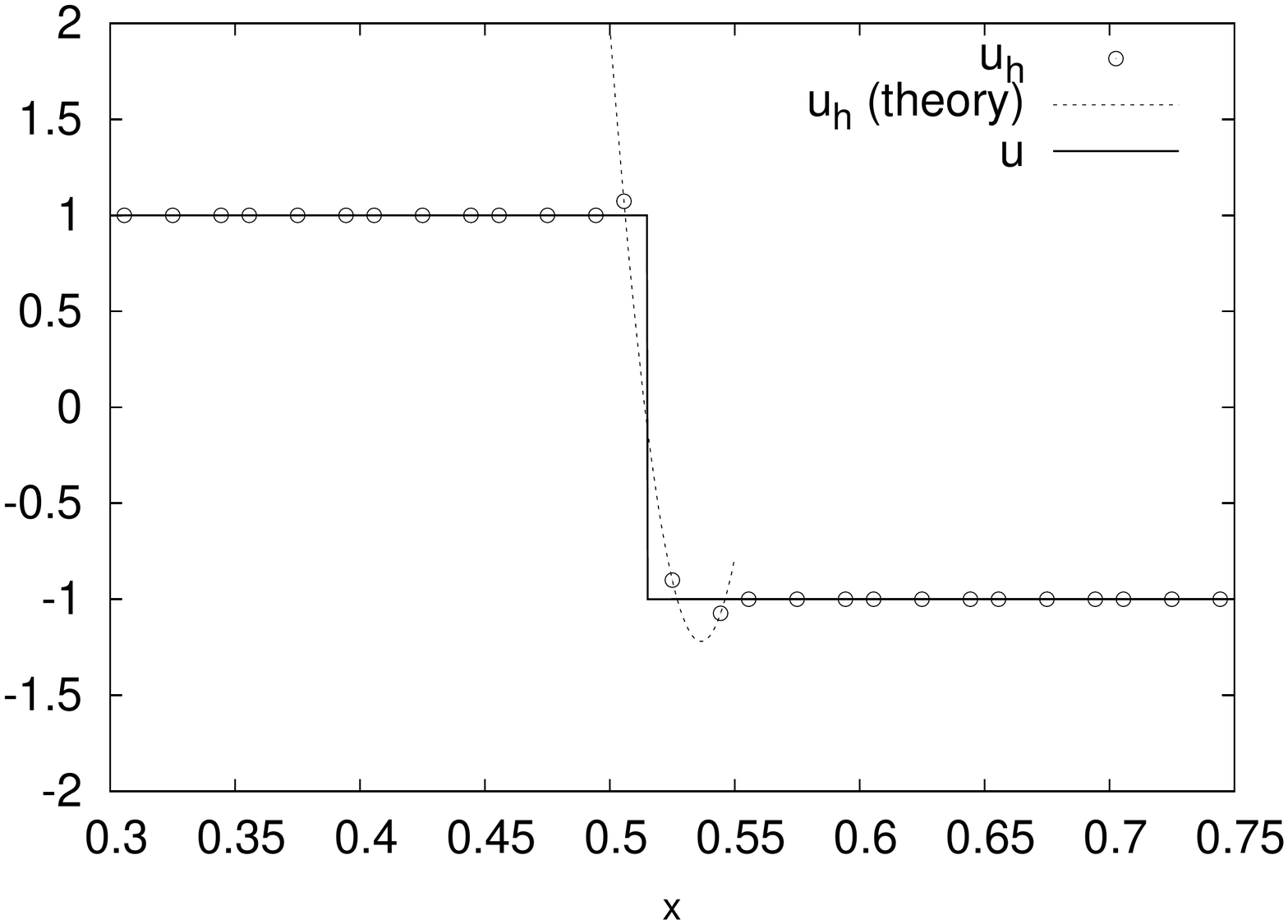 ,width=4.8cm}}\hspace{-0.8cm}
\subfigure[$s_c=0$            ]{\epsfig{figure=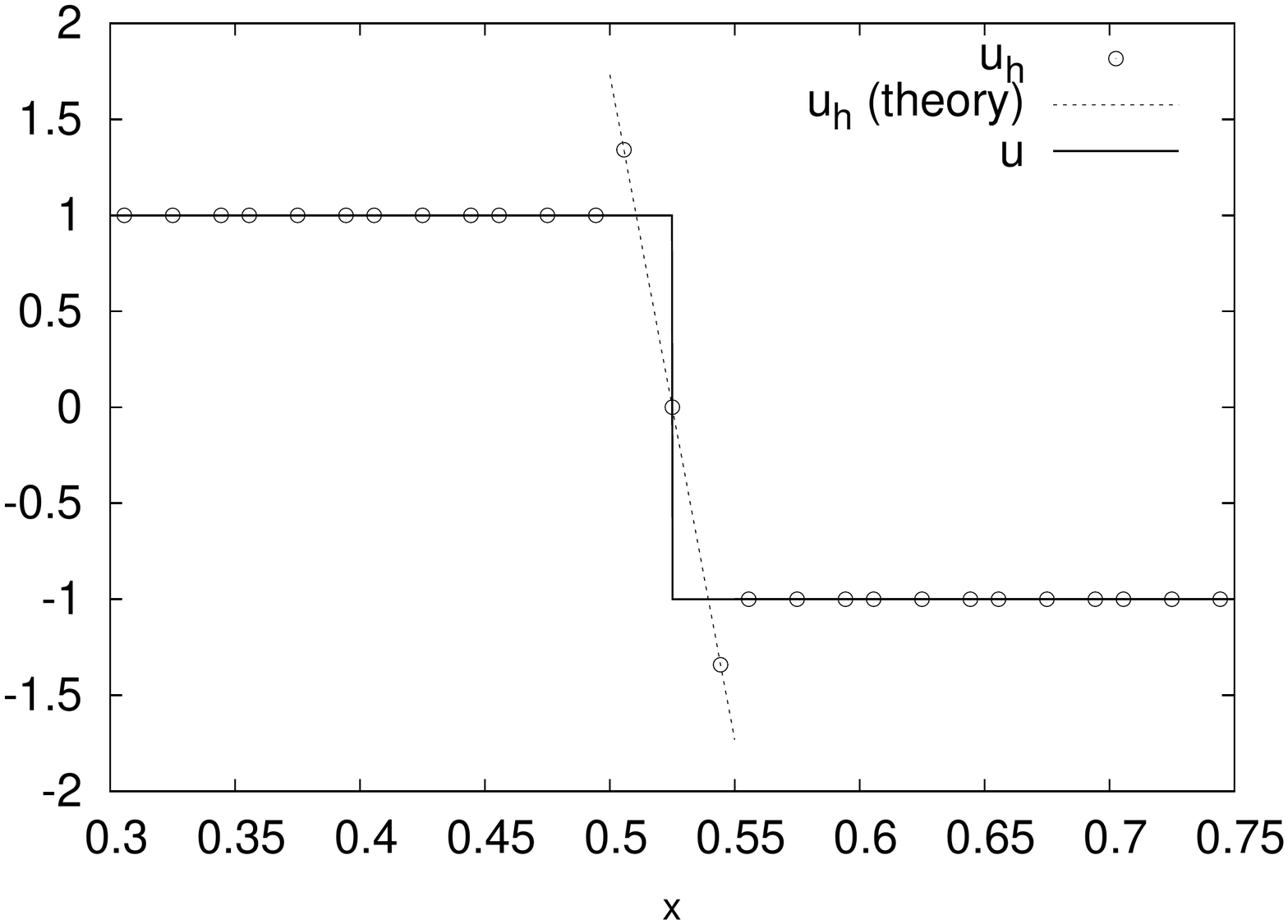 ,width=4.8cm}}\hspace{-0.8cm}
\subfigure[$s_c= \tfrac{2}{5}$]{\epsfig{figure=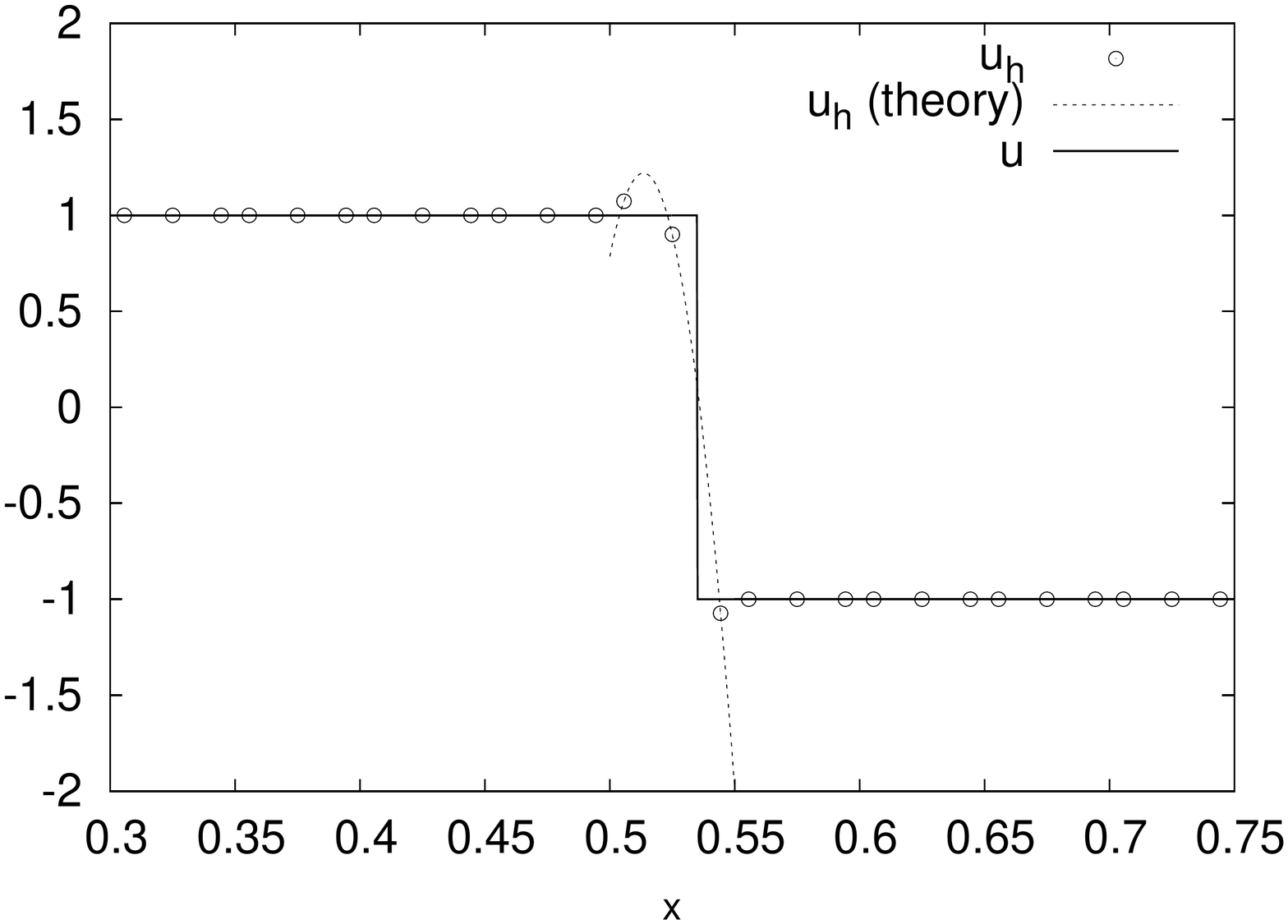 ,width=4.8cm}}\\
\subfigure[$s_c= \tfrac{3}{5}$]{\epsfig{figure=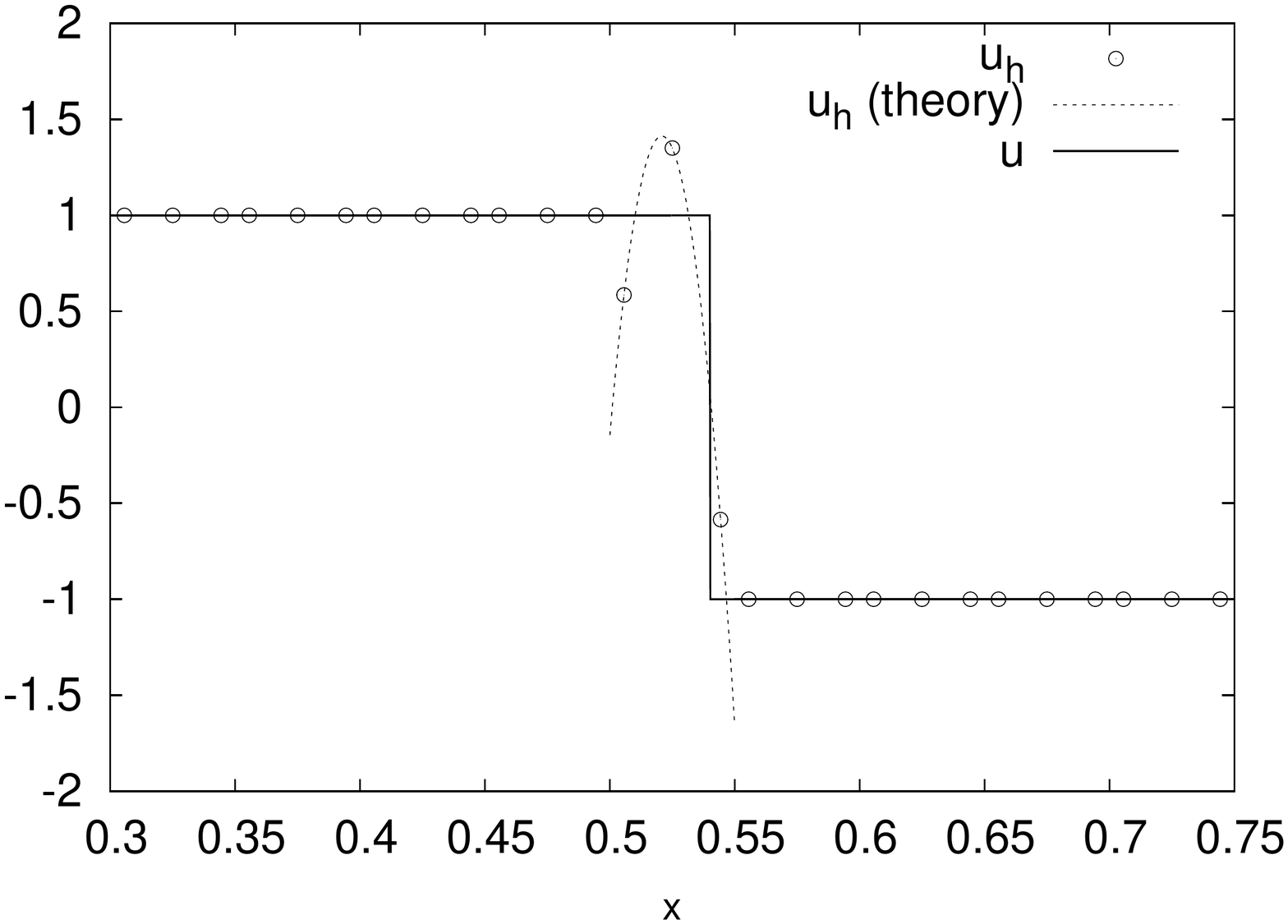 ,width=4.8cm}}\hspace{-0.8cm}
\subfigure[$s_c= \tfrac{4}{5}$]{\epsfig{figure=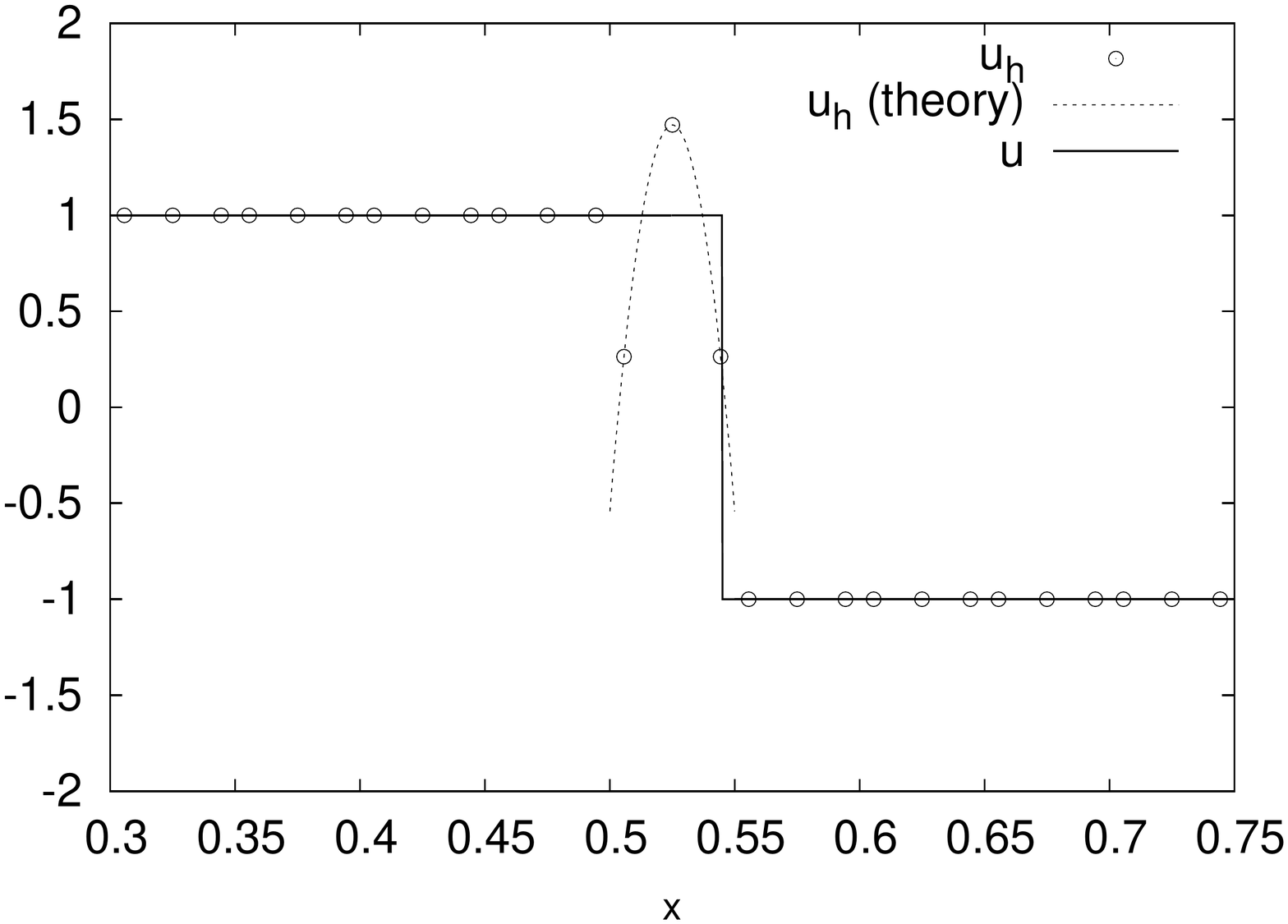 ,width=4.8cm}}\hspace{-0.8cm}
\subfigure[$s_c=1$            ]{\epsfig{figure=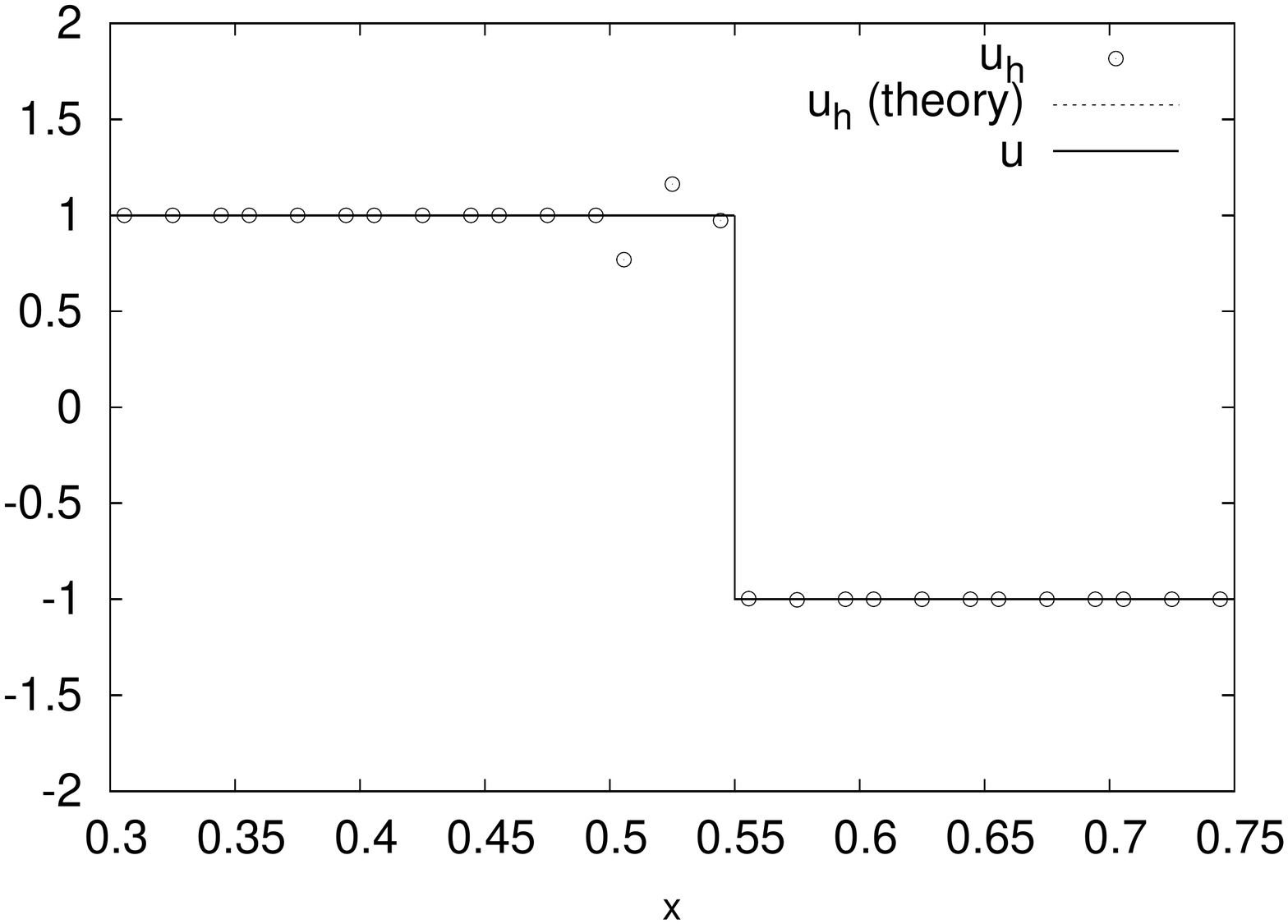 ,width=4.8cm}}
\caption{Steady-state solutions in the shock cell $\kappa_0$ for $p=2$ and the Godunov flux (\ref{eq:godunov_flux}).}
\label{fig:solutions_p2}
\end{center}
\end{figure}

\begin{figure}
\begin{center}
\subfigure[$s_c=-1$           ]{\epsfig{figure=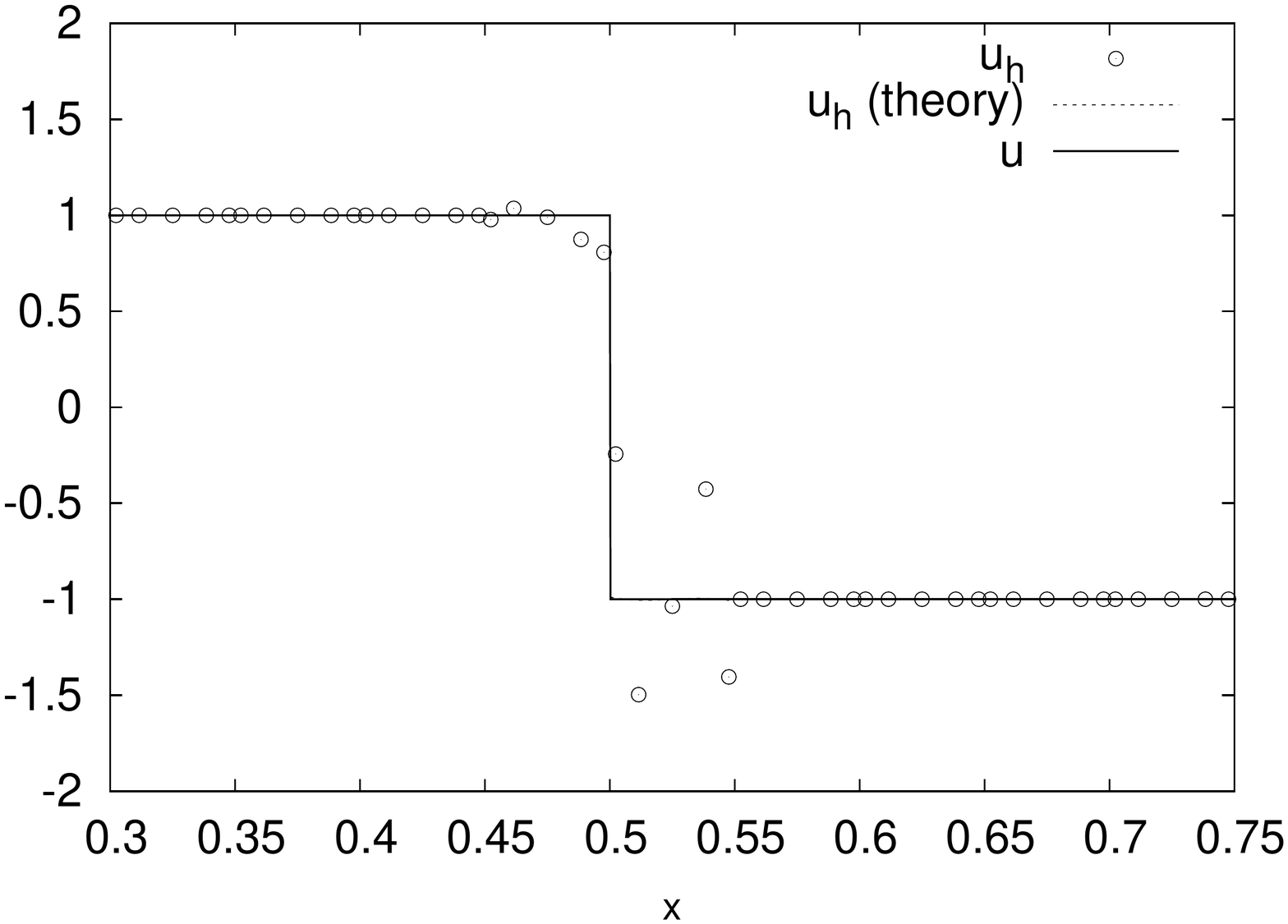 ,width=4.8cm}}\hspace{-0.8cm}
\subfigure[$s_c=-\tfrac{4}{5}$]{\epsfig{figure=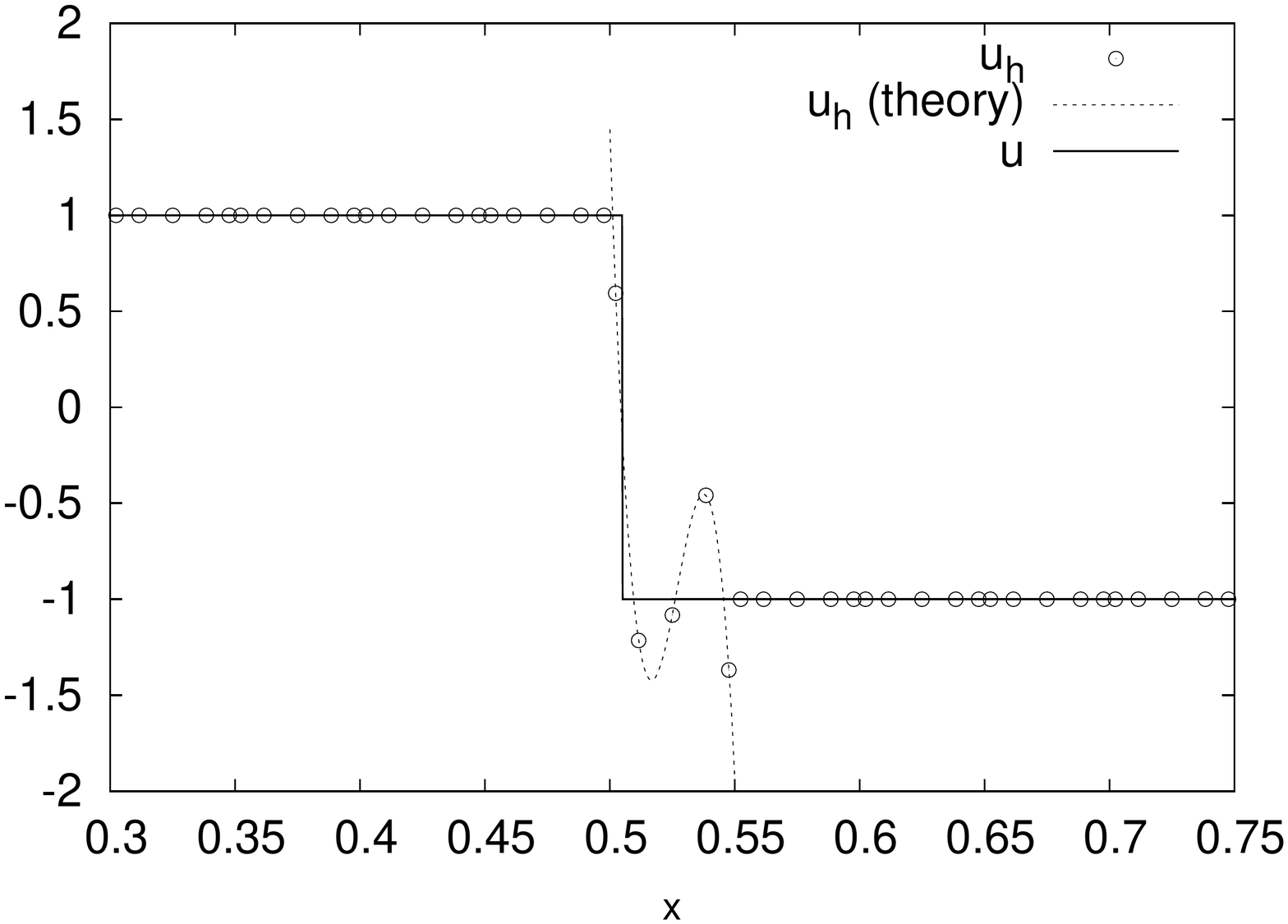 ,width=4.8cm}}\hspace{-0.8cm}
\subfigure[$s_c=-\tfrac{3}{5}$]{\epsfig{figure=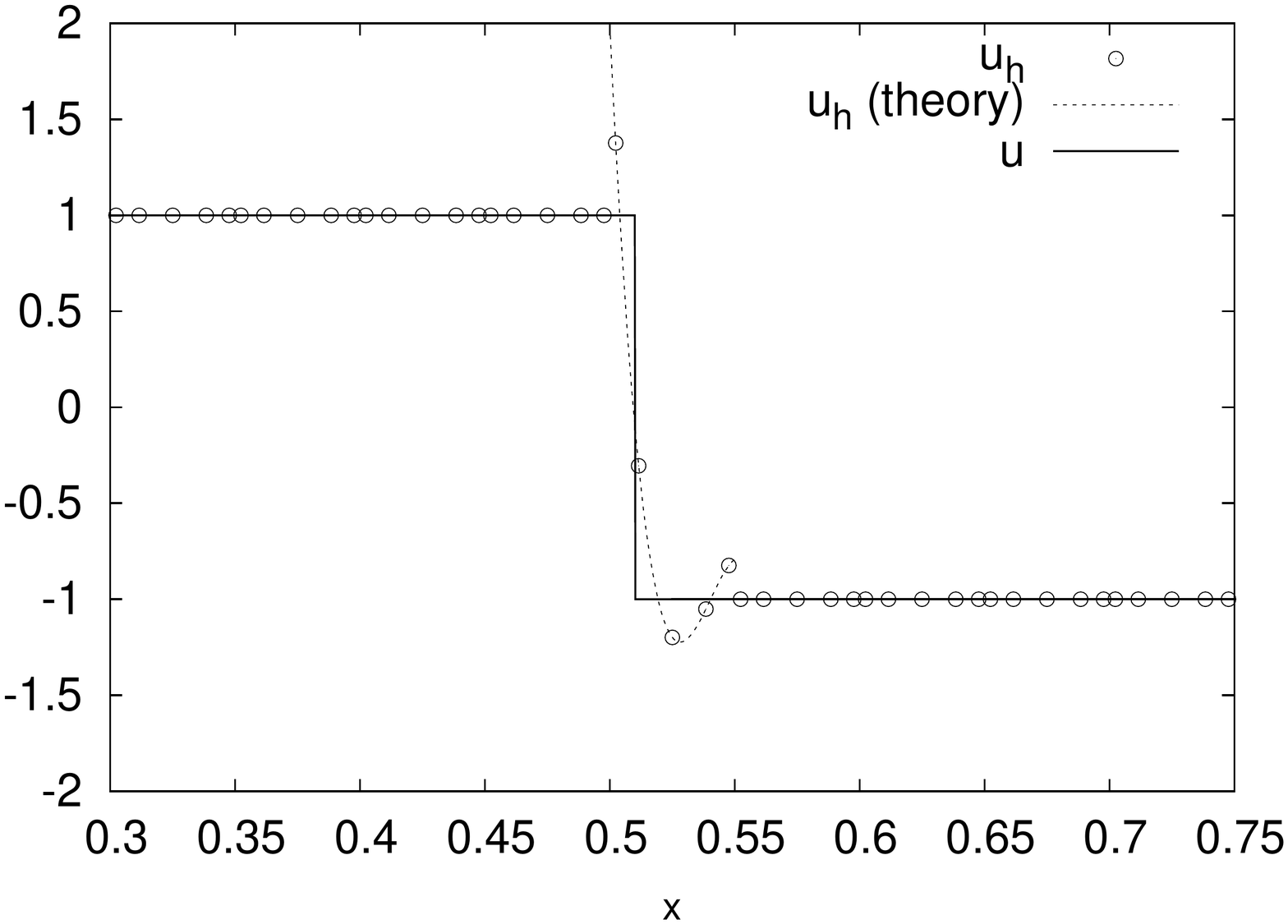 ,width=4.8cm}}\\
\subfigure[$s_c=-\tfrac{2}{5}$]{\epsfig{figure=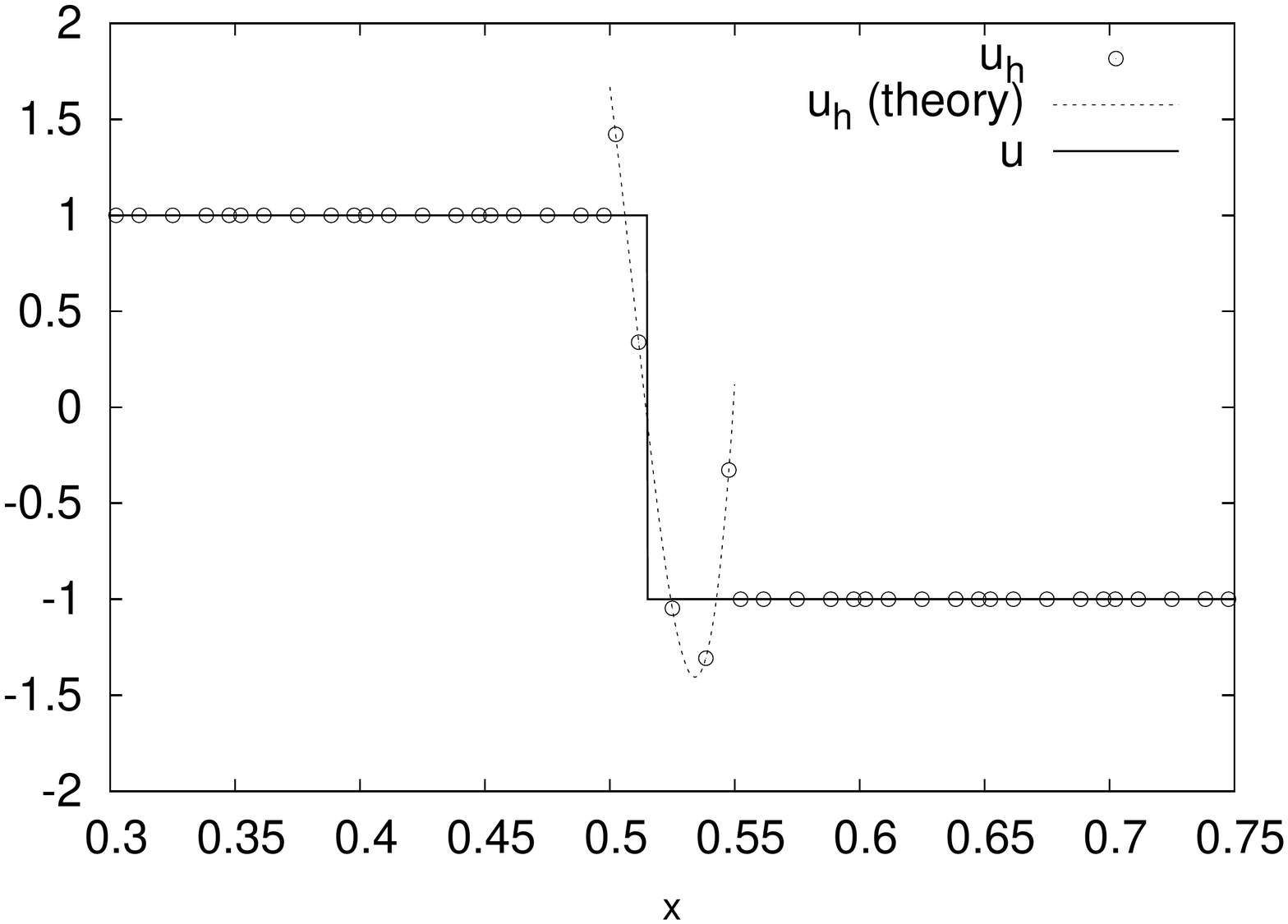 ,width=4.8cm}}\hspace{-0.8cm}
\subfigure[$s_c=0$            ]{\epsfig{figure=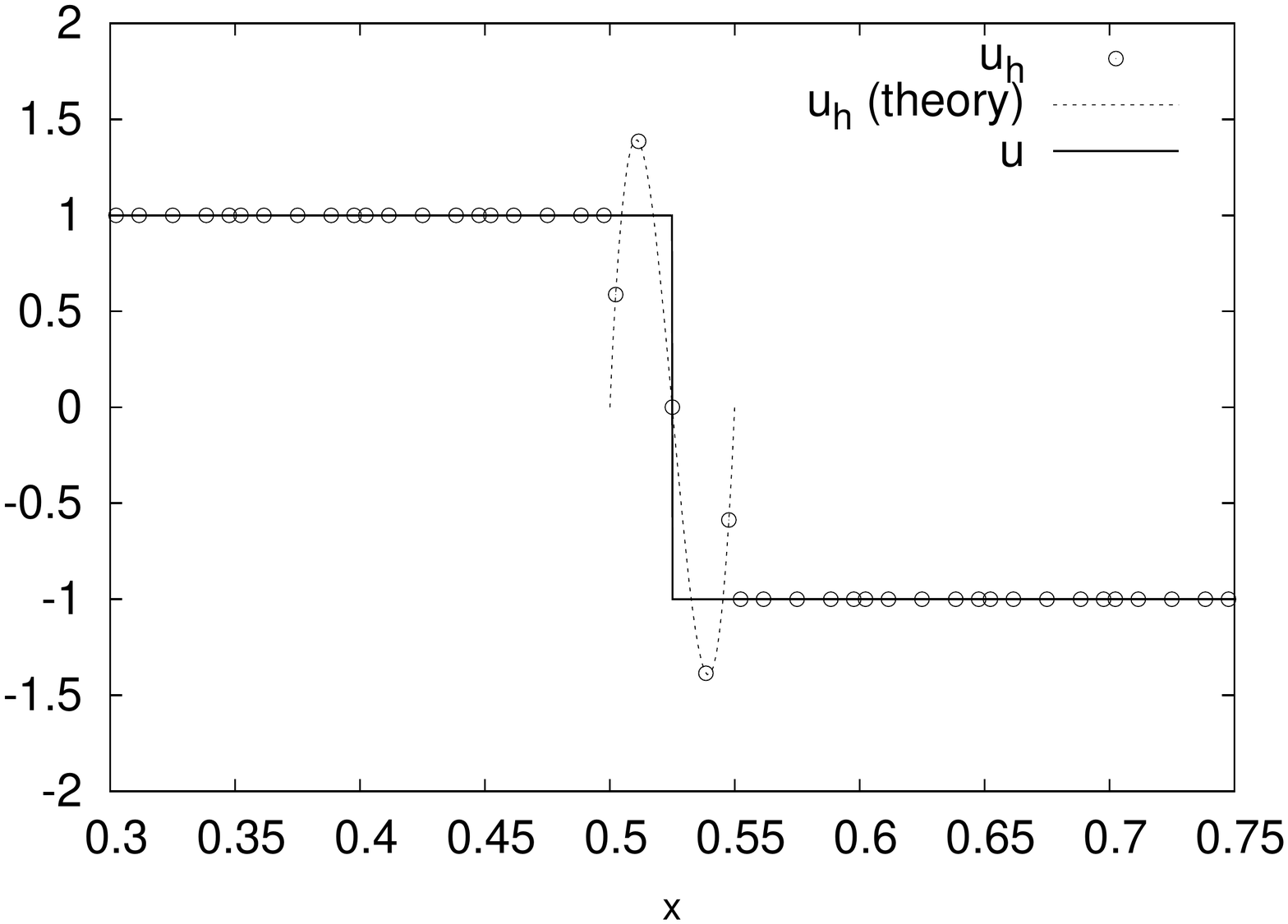 ,width=4.8cm}}\hspace{-0.8cm}
\subfigure[$s_c= \tfrac{2}{5}$]{\epsfig{figure=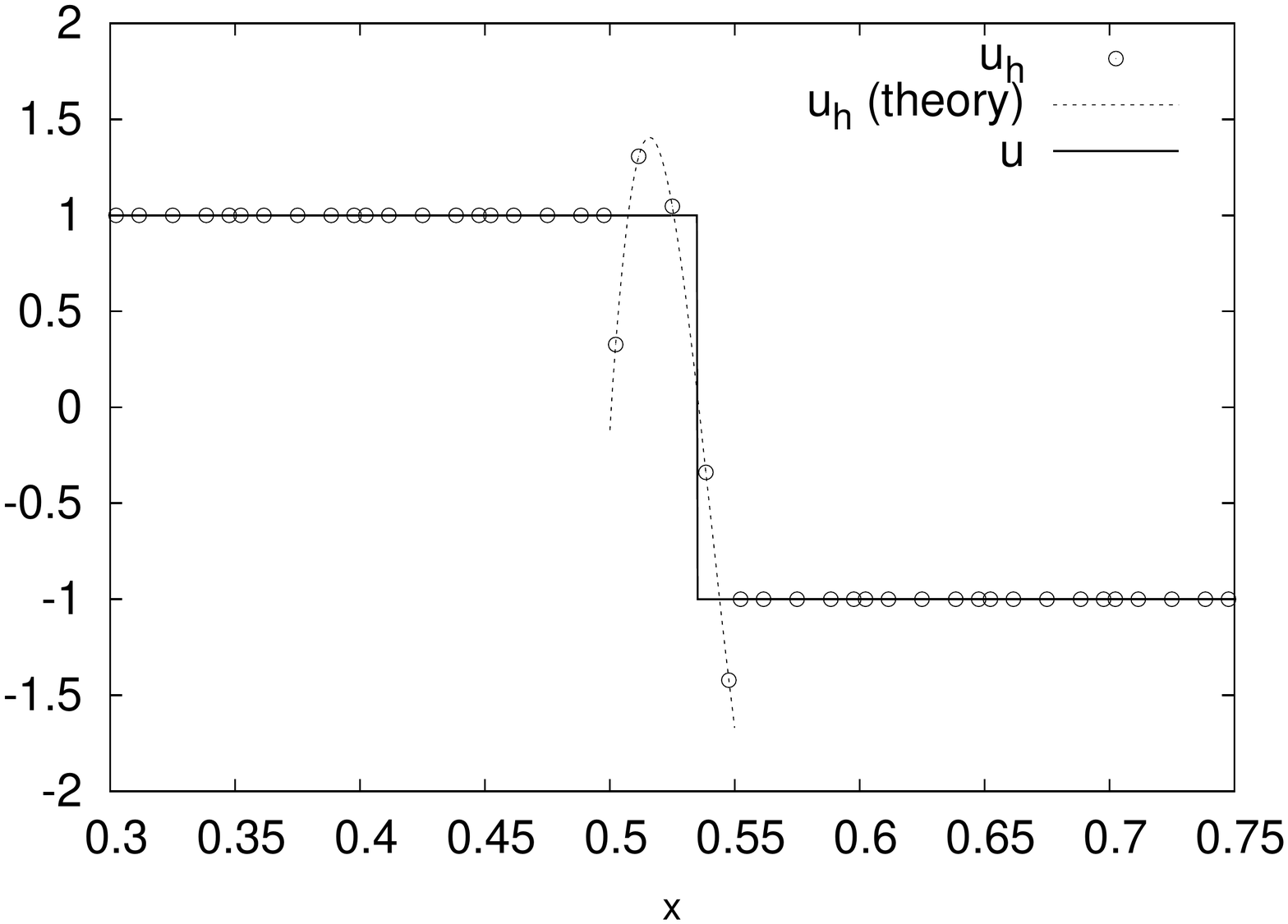 ,width=4.8cm}}\\
\subfigure[$s_c= \tfrac{3}{5}$]{\epsfig{figure=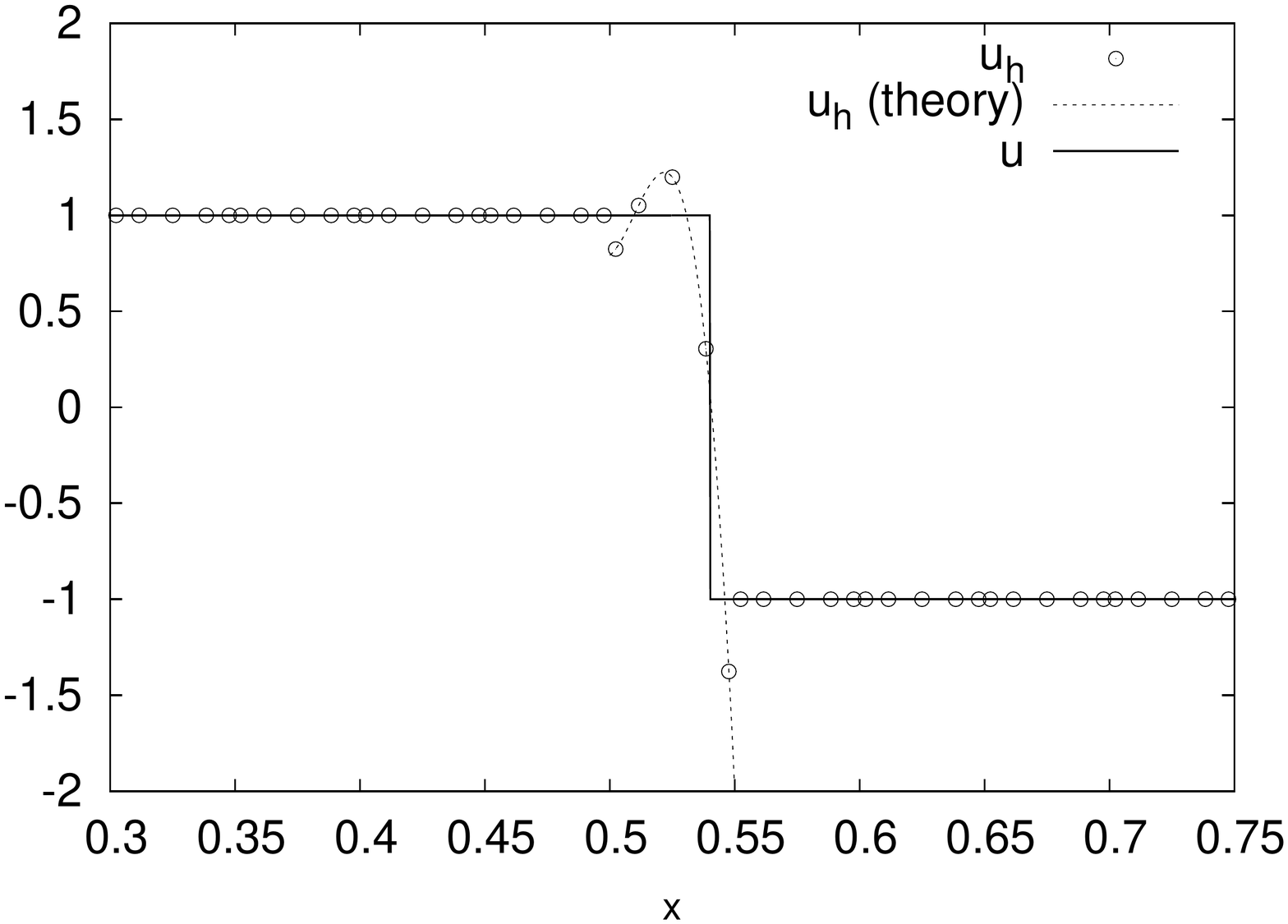 ,width=4.8cm}}\hspace{-0.8cm}
\subfigure[$s_c= \tfrac{4}{5}$]{\epsfig{figure=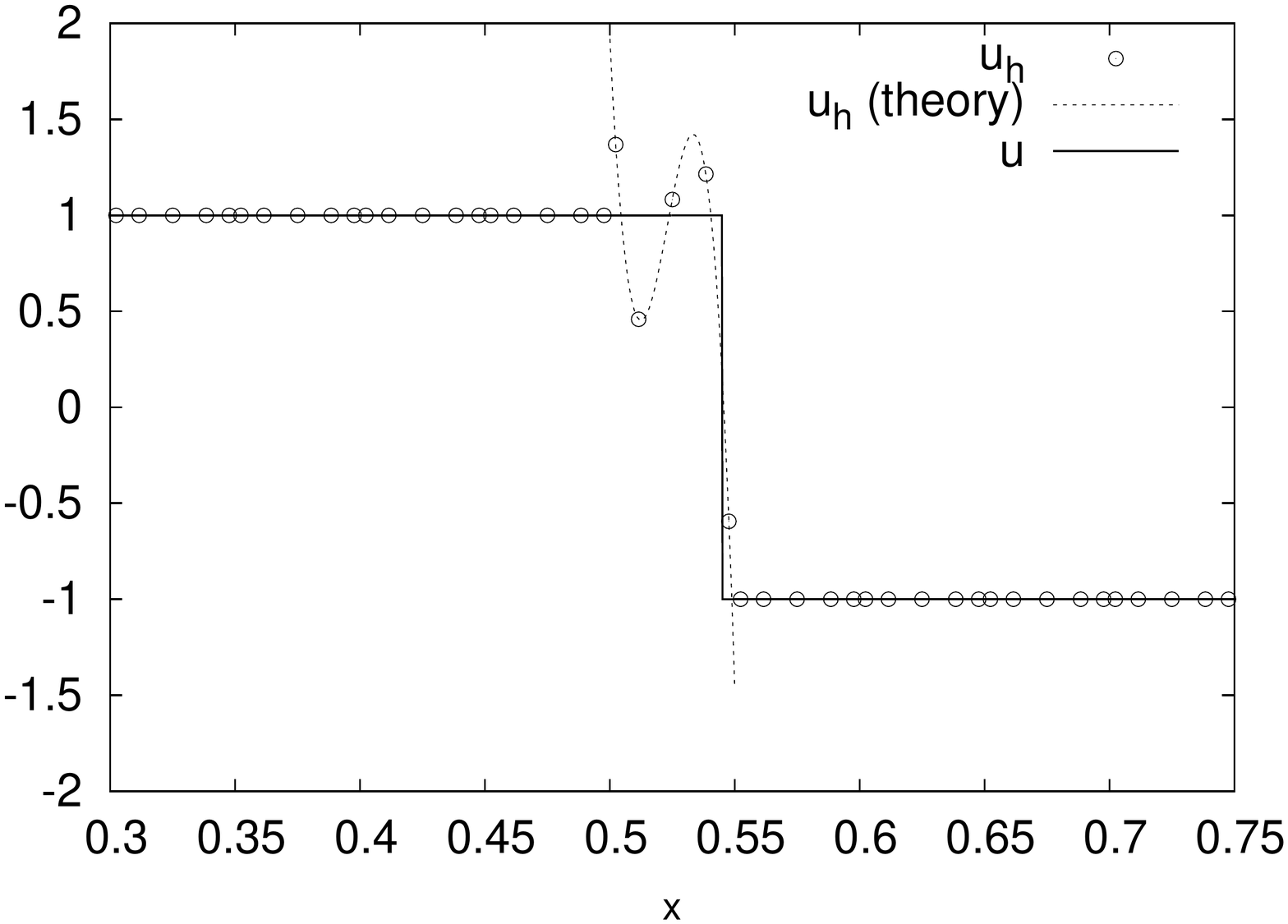 ,width=4.8cm}}\hspace{-0.8cm}
\subfigure[$s_c=1$            ]{\epsfig{figure=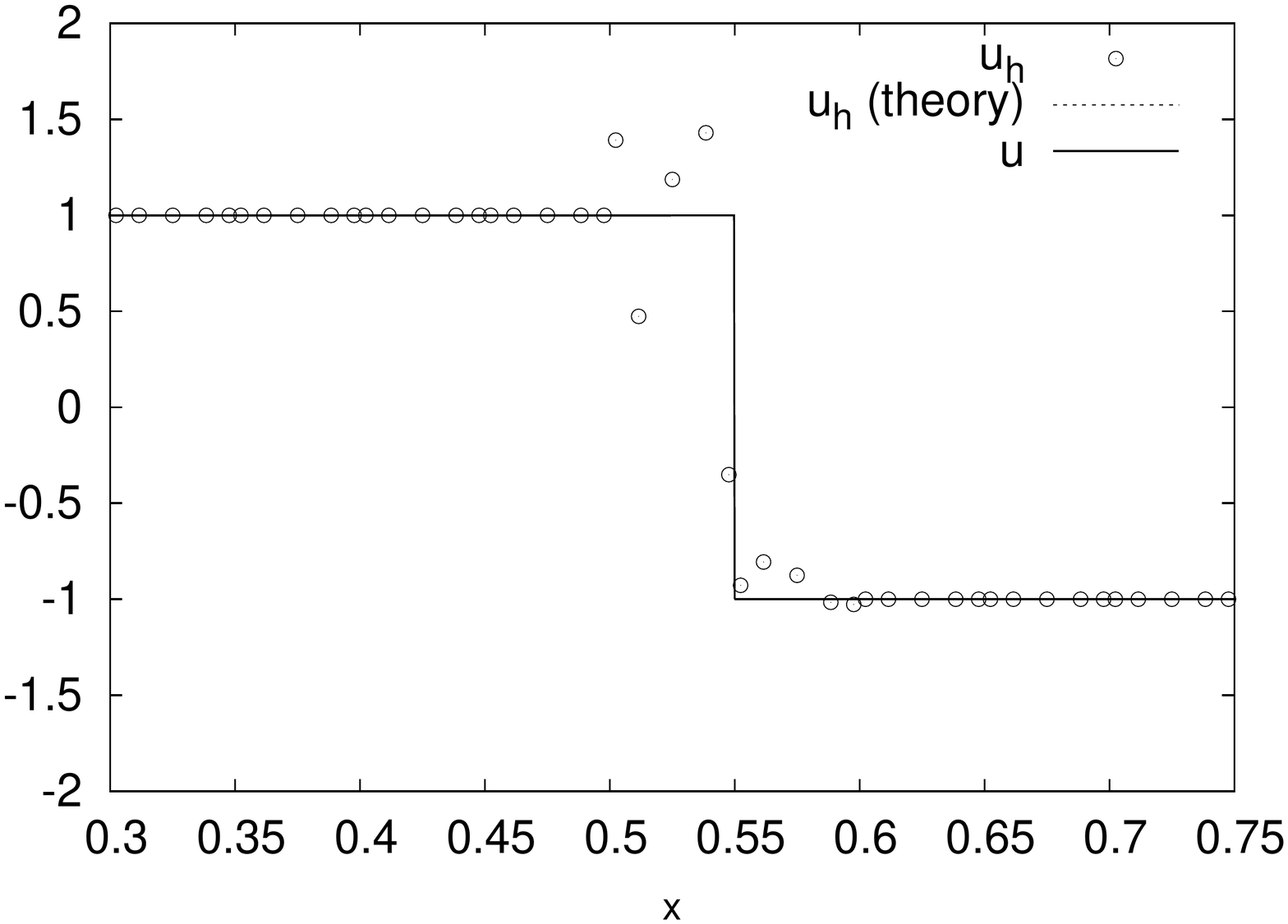 ,width=4.8cm}}
\caption{Steady-state solutions in the shock cell $\kappa_0$ for $p=3$ and the Godunov flux (\ref{eq:godunov_flux}).}
\label{fig:solutions_p3}
\end{center}
\end{figure}

In Figures~\ref{fig:solutions_llf} and \ref{fig:solutions_OSH}, we present numerical experiments with the LLF flux (\ref{eq:llf_flux}) and Engquist-Osher flux (\ref{eq:OSH_flux}). The results in the shock cell are remarkably similar to the theoretical solution for the Godunov flux (\ref{eq:godunov_flux}). This observation is in agreement with precedent numerical evidence of comparable resolution of the DG scheme with different numerical fluxes when the polynomial degree $p$ is increased \cite{cockburn-shu01,qui_et-al06}. Note that the Engquist-Osher flux reduces to the Godunov flux as soon as the left and right traces satisfy $u^-\geq\hat{u }$ and $u^+\leq\hat{u }$, where $\hat{u}$ is defined in section~\ref{sec:model_eqn}. For instance, it may be easily checked that this holds for $|s_c|\leq\tfrac{\sqrt{3}}{2}$ for $p=1$ and $|s_c|\leq\tfrac{\sqrt{3}}{3}$ for $p=2$ from Lemma~\ref{th:shock_sol_DOF0} and Theorem~\ref{th:shock_sol_p0123}. The main difference consists in the neighboring cells of the shock region where oscillations occur with the LLF flux. These oscillations are a consequence of Theorem~\ref{th:exp_decay} which details the mechanism of transmission of oscillations at interfaces with a monotone numerical flux. As an illustration, Table \ref{tab:exp_decay_LLF} gives the values of the left and right traces at interfaces $x_{j+1/2}$ of the oscillations of the numerical solution for $8\leq j\leq 10$ in the supersonic region. The trace $u_{j+1/2}^+-u_L$ for $j=10$ corresponds to the left trace in the shock cell $j_c=11$. As expected by Theorem~\ref{th:exp_decay} the sign of the jumps $u_{j+1/2}^\pm-u_L$ is conserved through the interfaces as a consequence of the monotonicity of the LLF flux. The last column provides the values of DOFs of $u_h-u_L$ in each cell scaled by the jump $u_{j+1/2}^+-u_L$. The values are always quite lower than $u_{j+1/2}^+-u_L$ and every DOF for $0\leq l < p$ is several order of magnitude lower than the last DOF $l=p$.

\begin{figure}
\begin{center}
\subfigure{\epsfig{figure=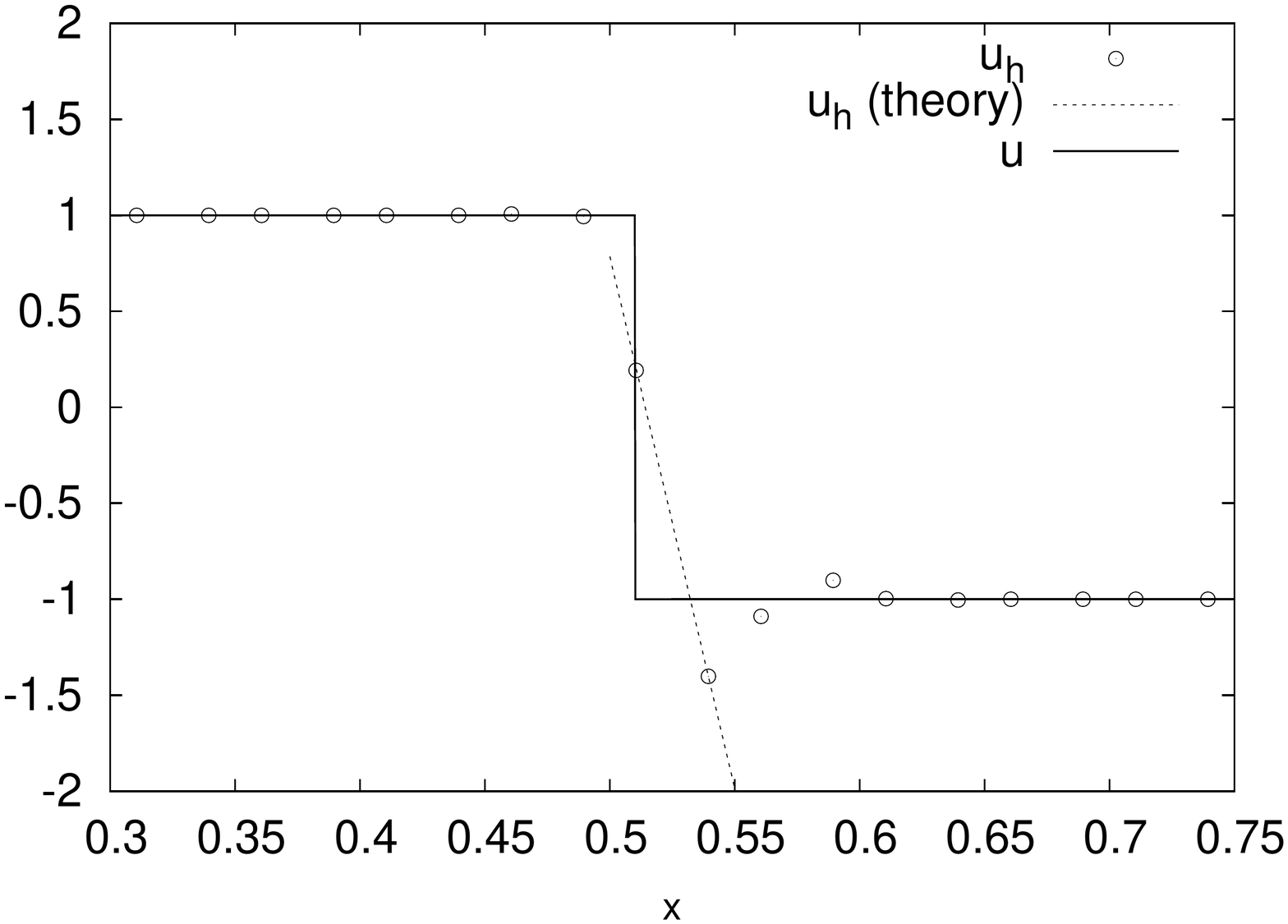 ,width=4.8cm}}\hspace{-0.8cm}
\subfigure{\epsfig{figure=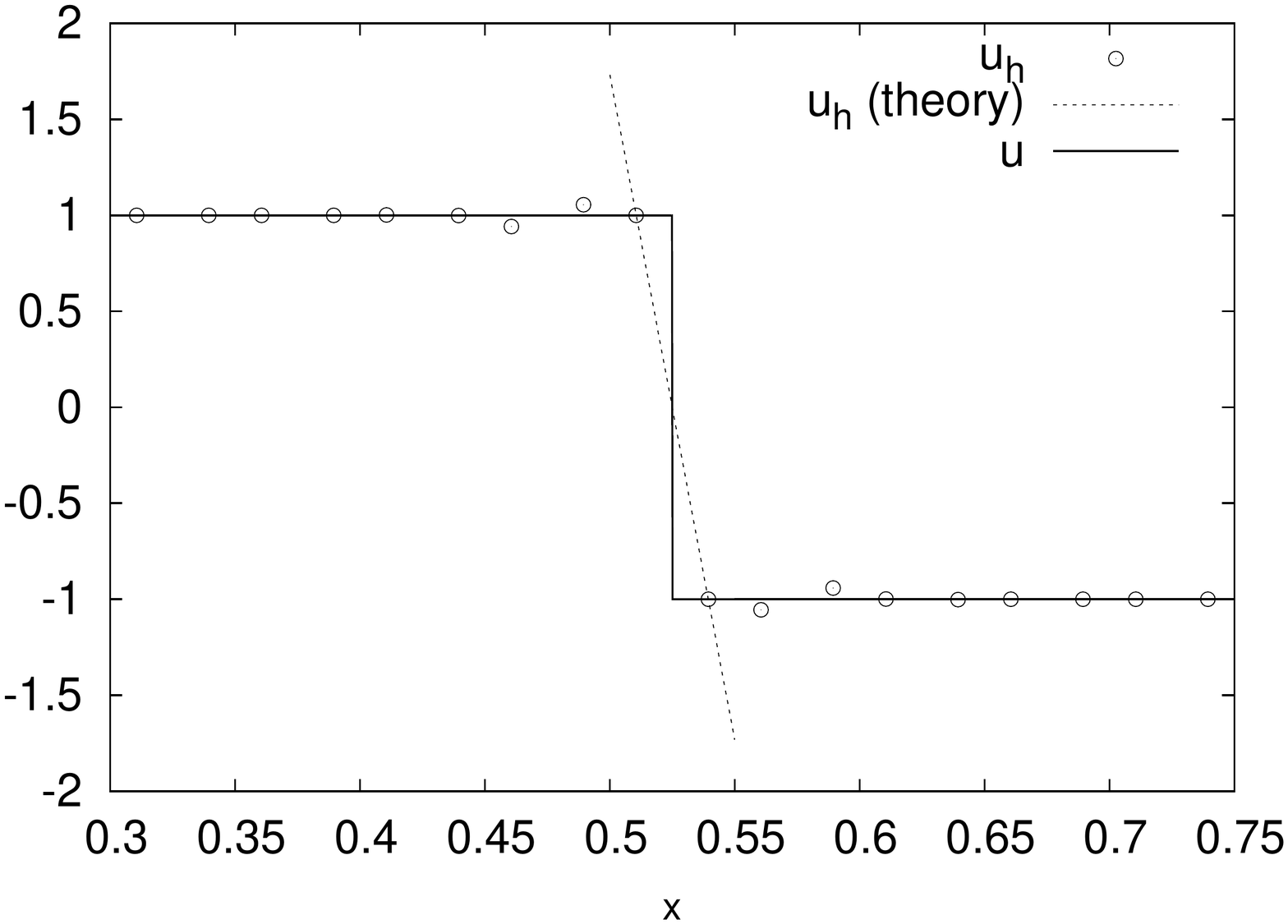 ,width=4.8cm}}\hspace{-0.8cm}
\subfigure{\epsfig{figure=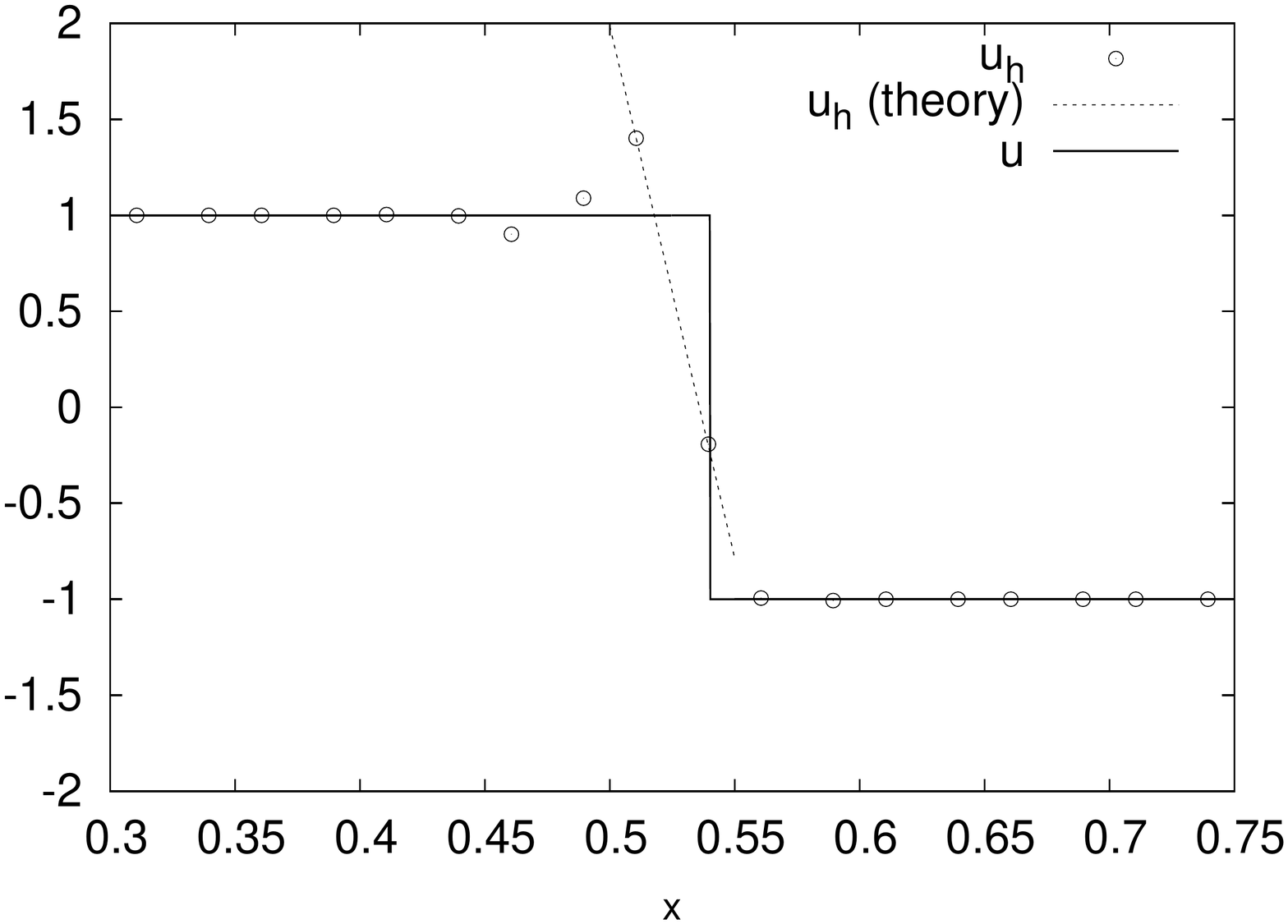 ,width=4.8cm}}\\
\subfigure{\epsfig{figure=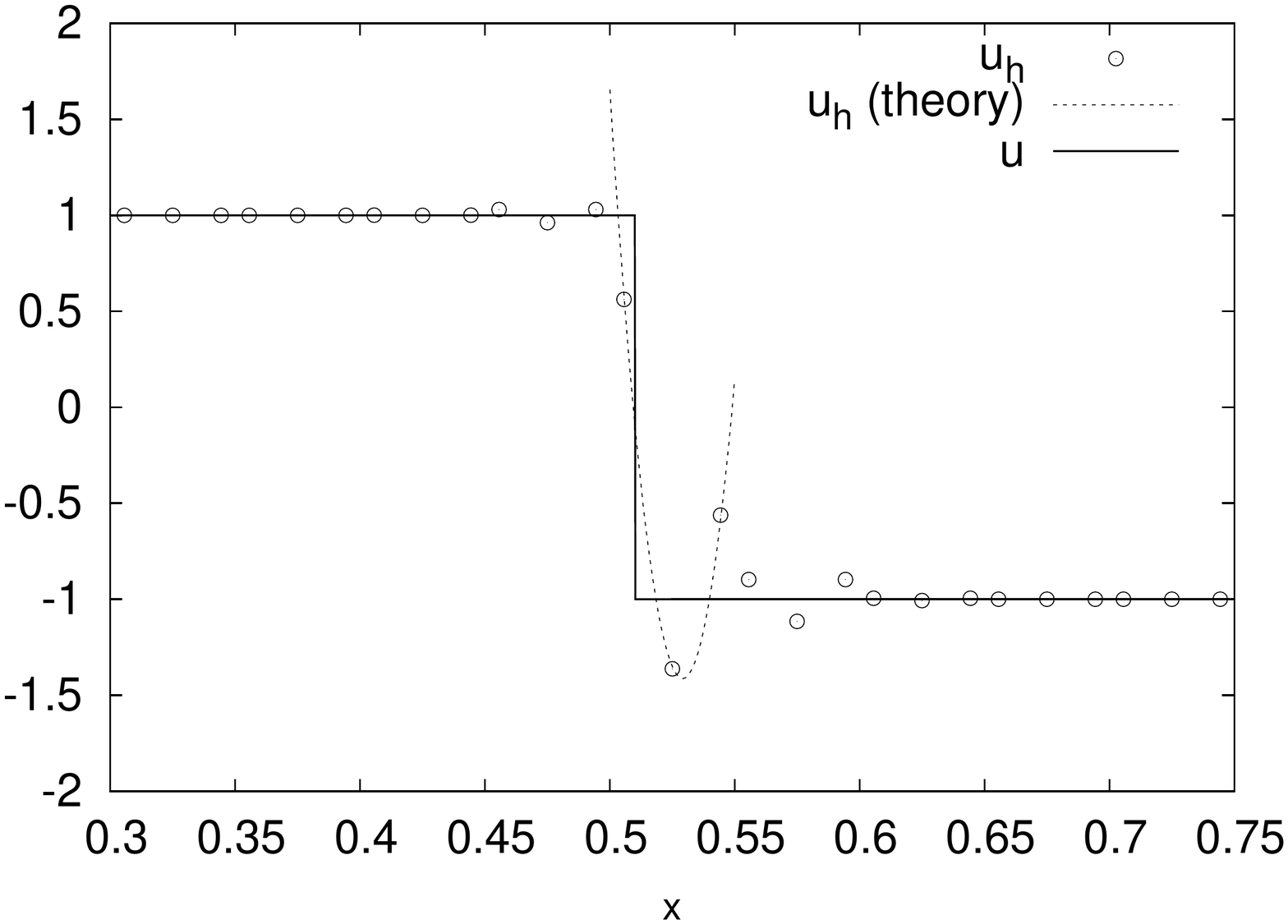 ,width=4.8cm}}\hspace{-0.8cm}
\subfigure{\epsfig{figure=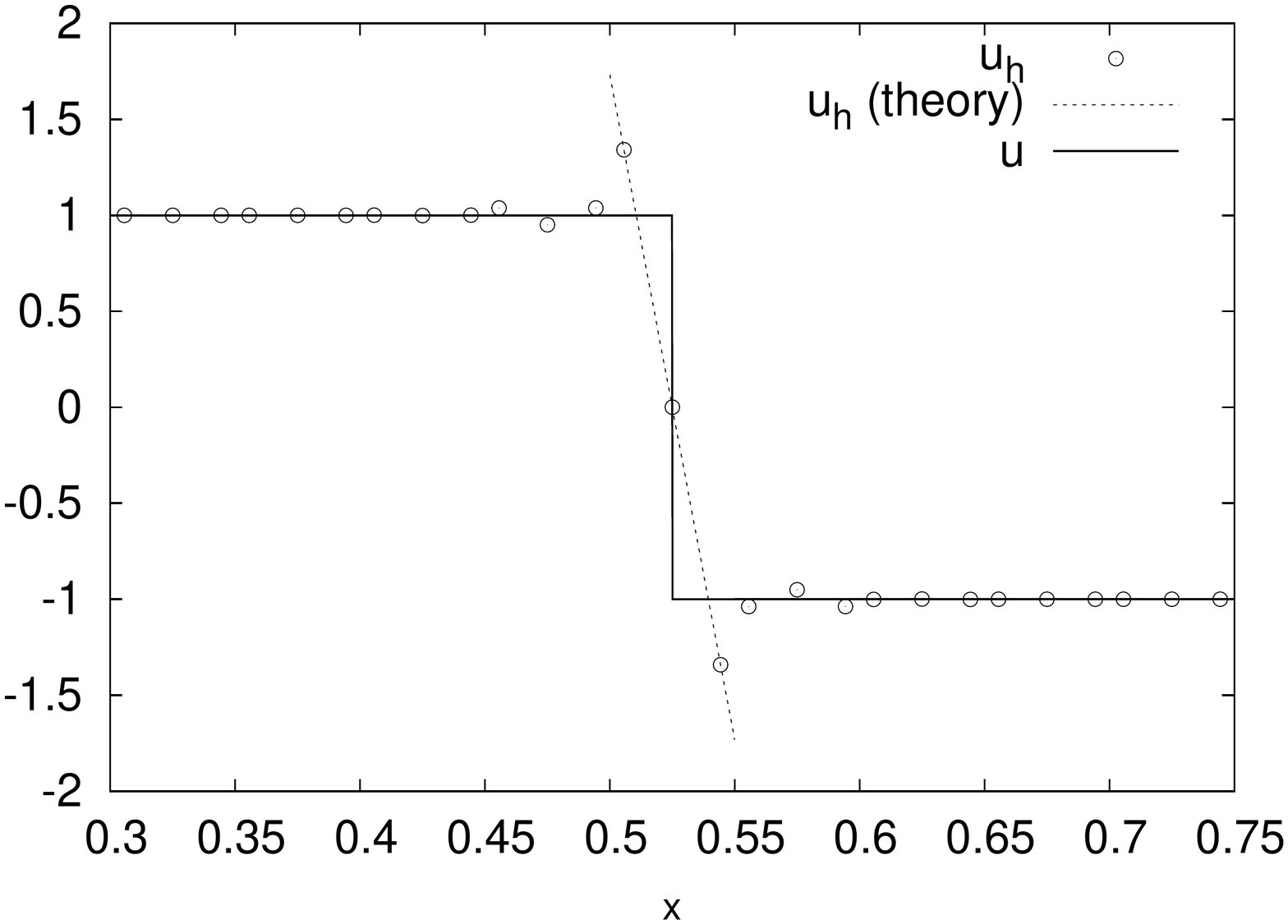 ,width=4.8cm}}\hspace{-0.8cm}
\subfigure{\epsfig{figure=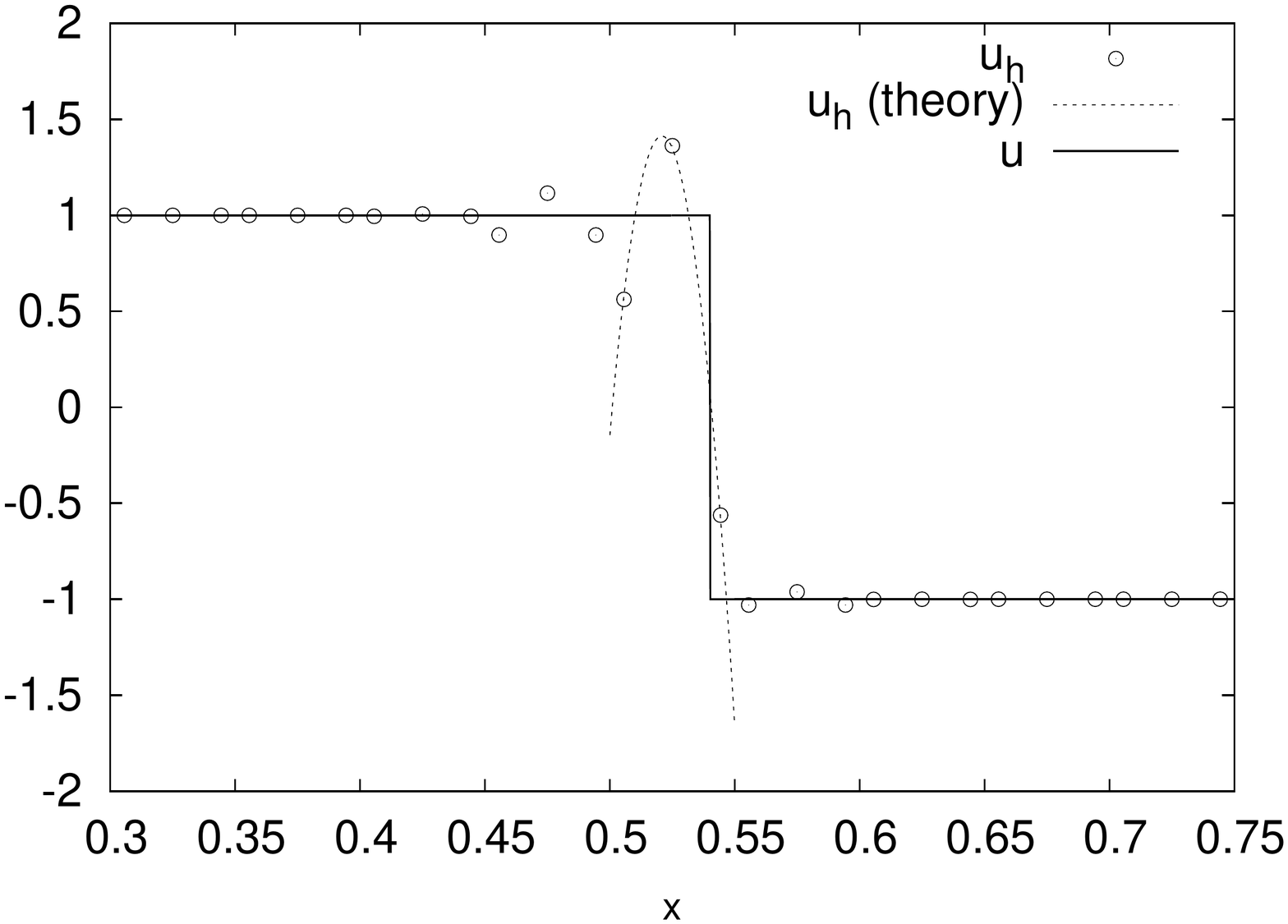 ,width=4.8cm}}\\
\setcounter{subfigure}{0}
\subfigure[$s_c=-\tfrac{3}{5}$]{\epsfig{figure=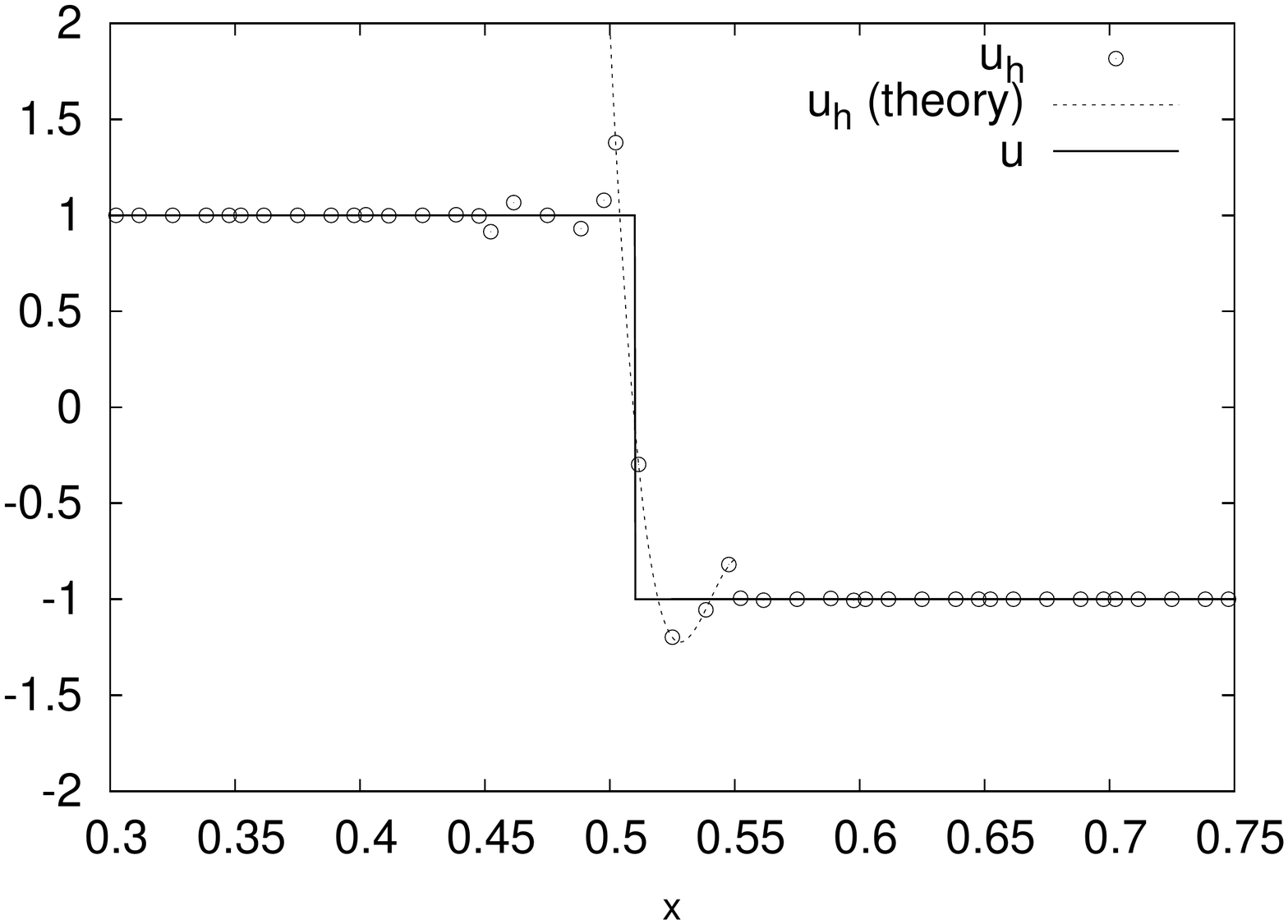 ,width=4.8cm}}\hspace{-0.8cm}
\subfigure[$s_c=0$            ]{\epsfig{figure=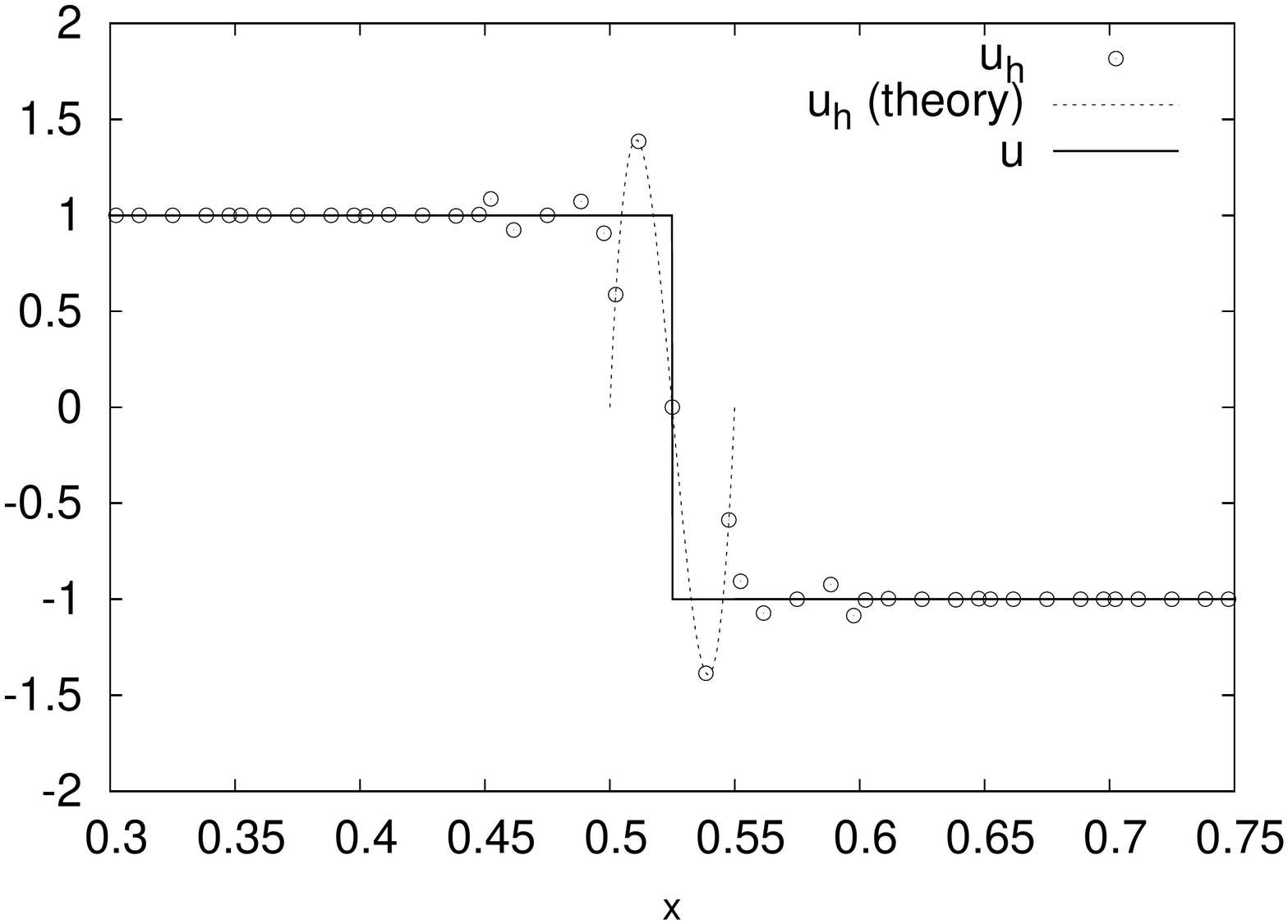 ,width=4.8cm}}\hspace{-0.8cm}
\subfigure[$s_c= \tfrac{3}{5}$]{\epsfig{figure=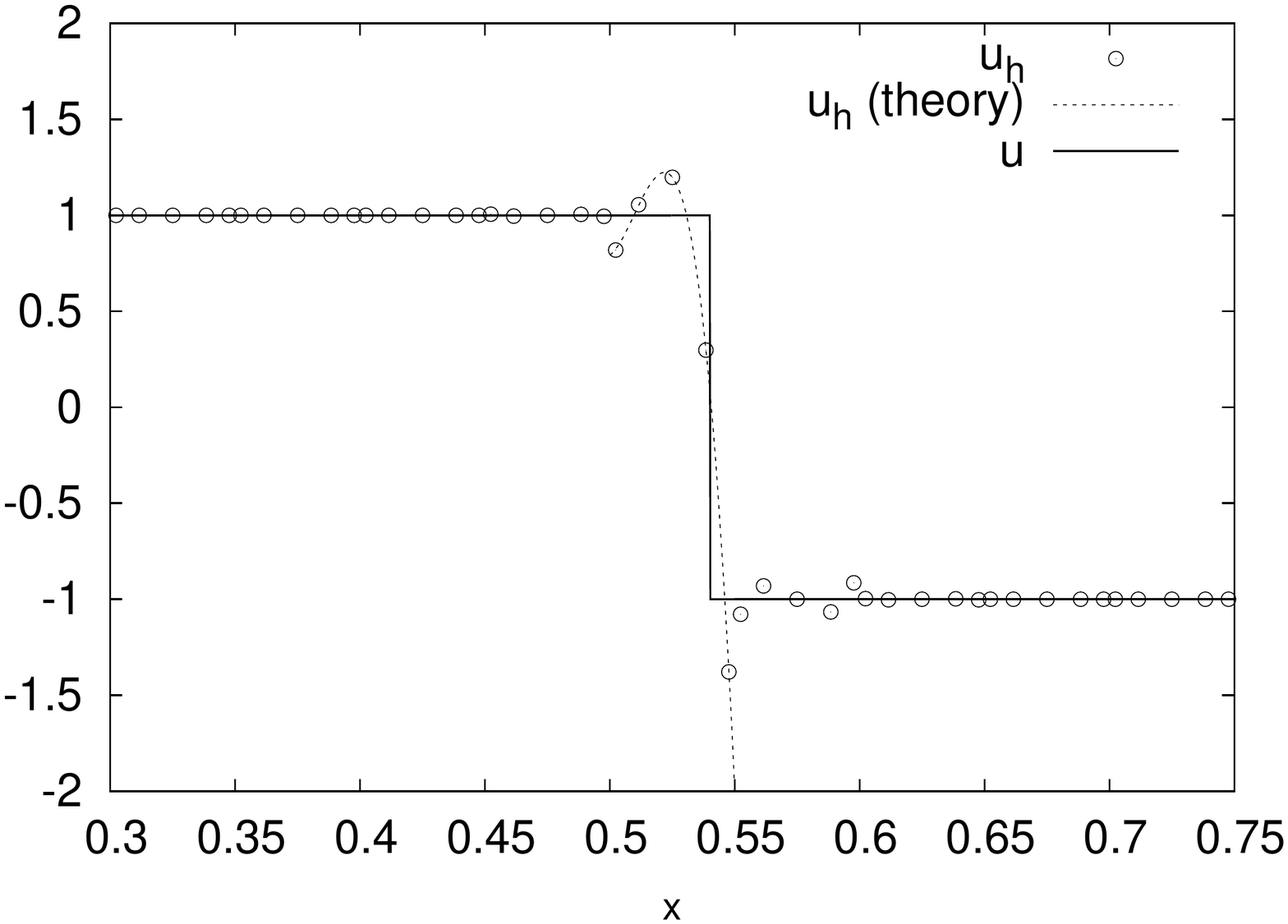 ,width=4.8cm}}
\caption{Steady-state solutions in the shock cell $\kappa_0$ obtained with the LLF flux (\ref{eq:llf_flux}) for $p=1$ (top row), $p=2$ (second row), and $p=3$ (bottom row).}
\label{fig:solutions_llf}
\end{center}
\end{figure}

\begin{figure}
\begin{center}
\subfigure{\epsfig{figure=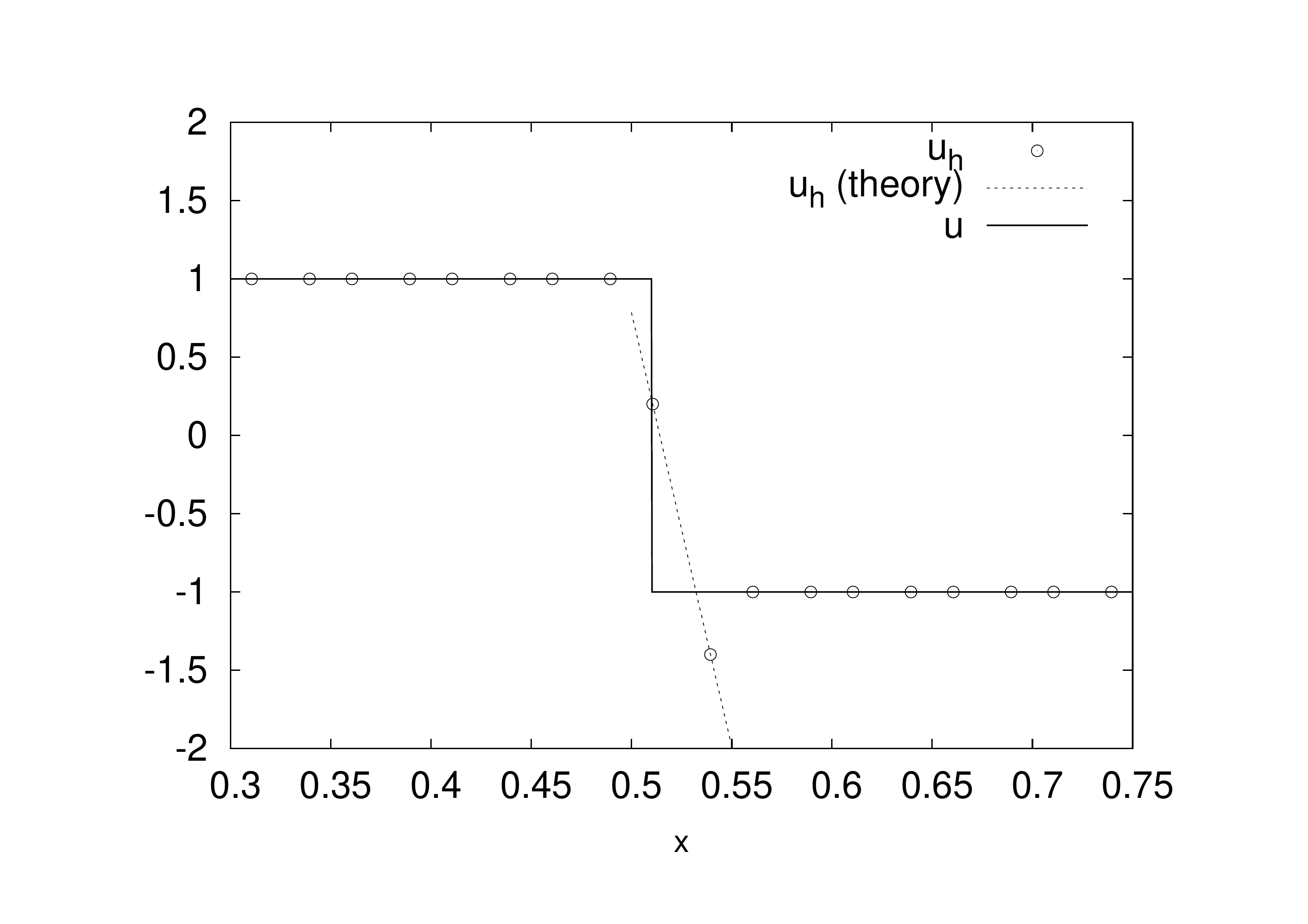 ,width=4.8cm}}\hspace{-0.8cm}
\subfigure{\epsfig{figure=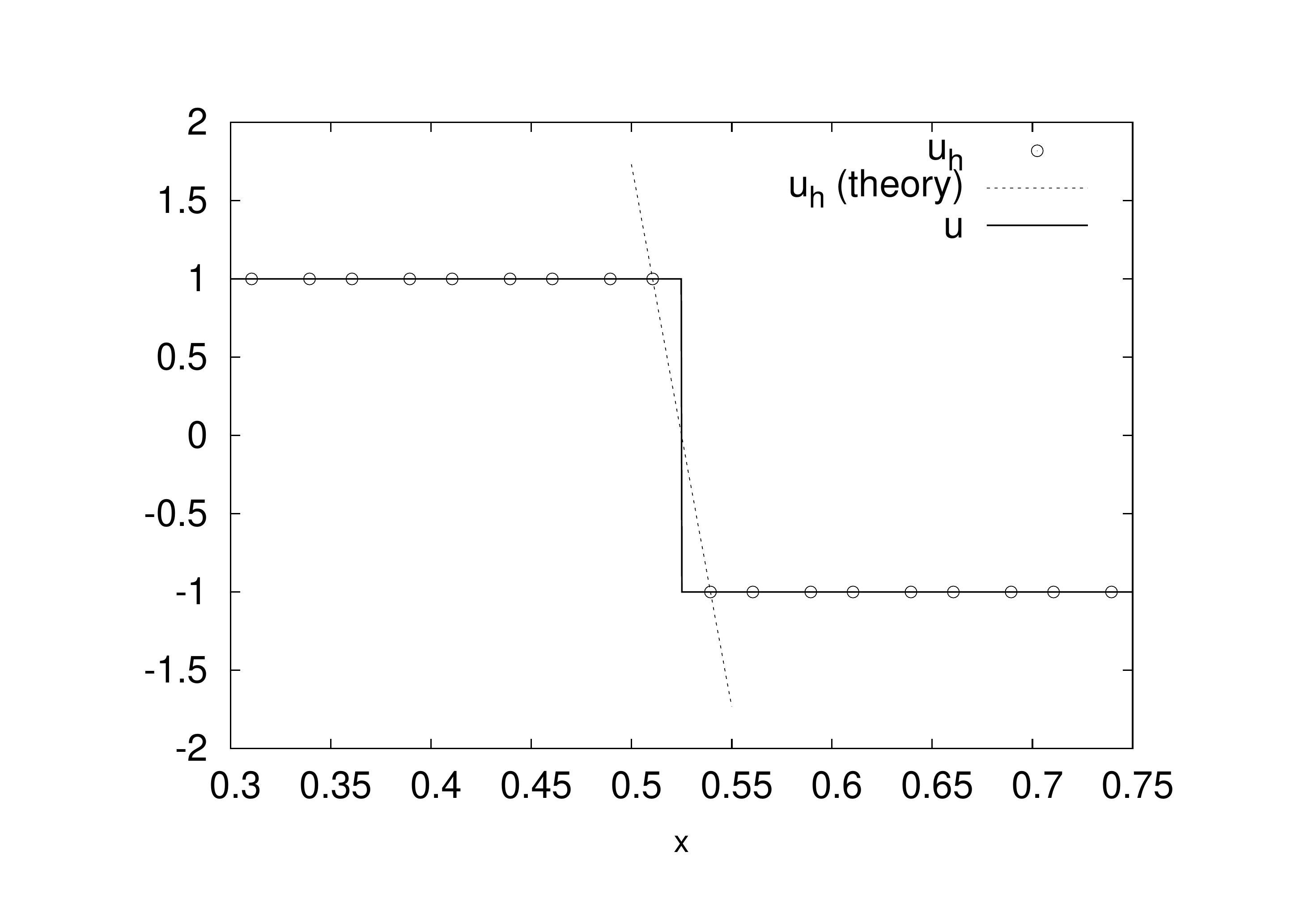 ,width=4.8cm}}\hspace{-0.8cm}
\subfigure{\epsfig{figure=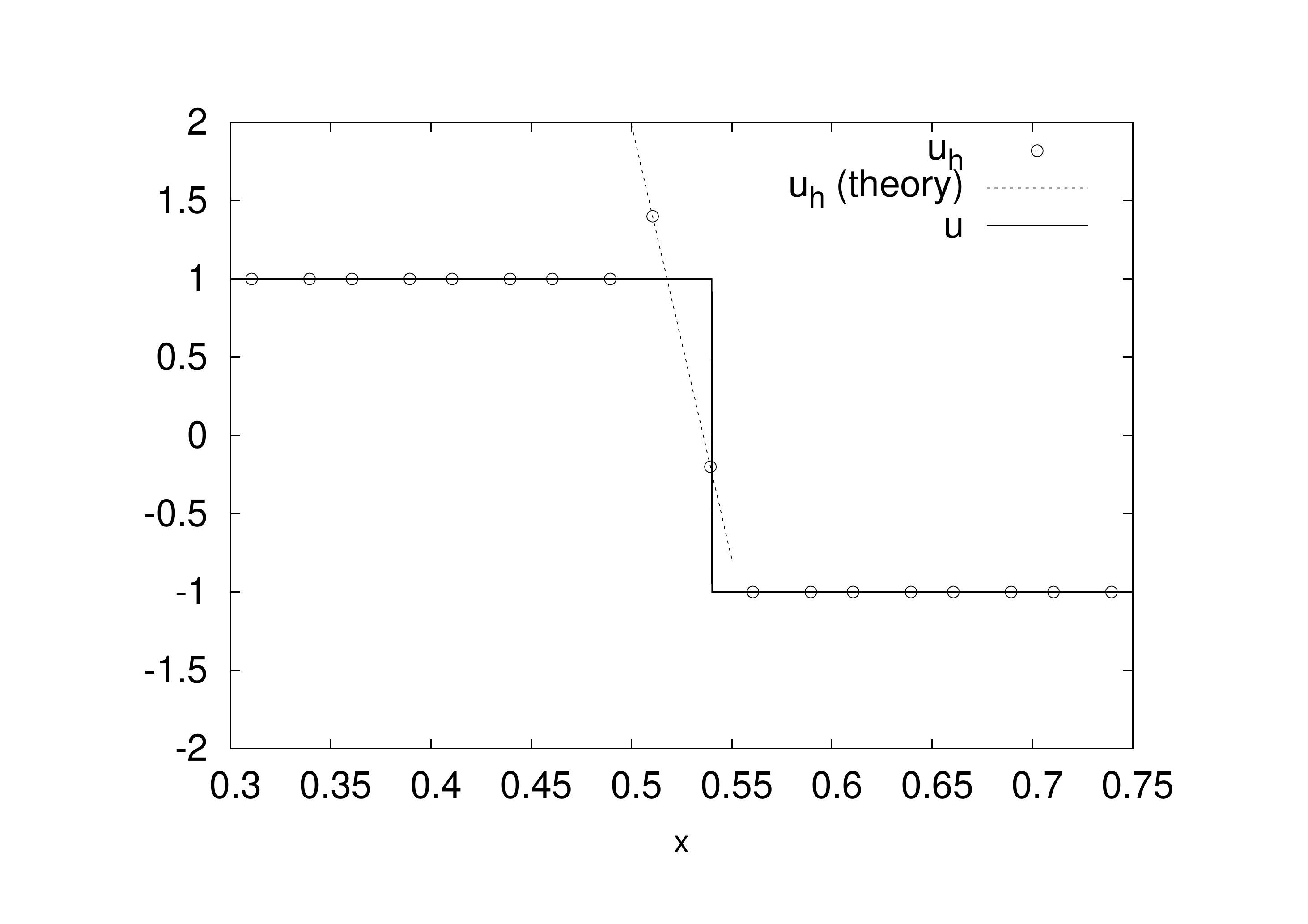 ,width=4.8cm}}\\
\subfigure{\epsfig{figure=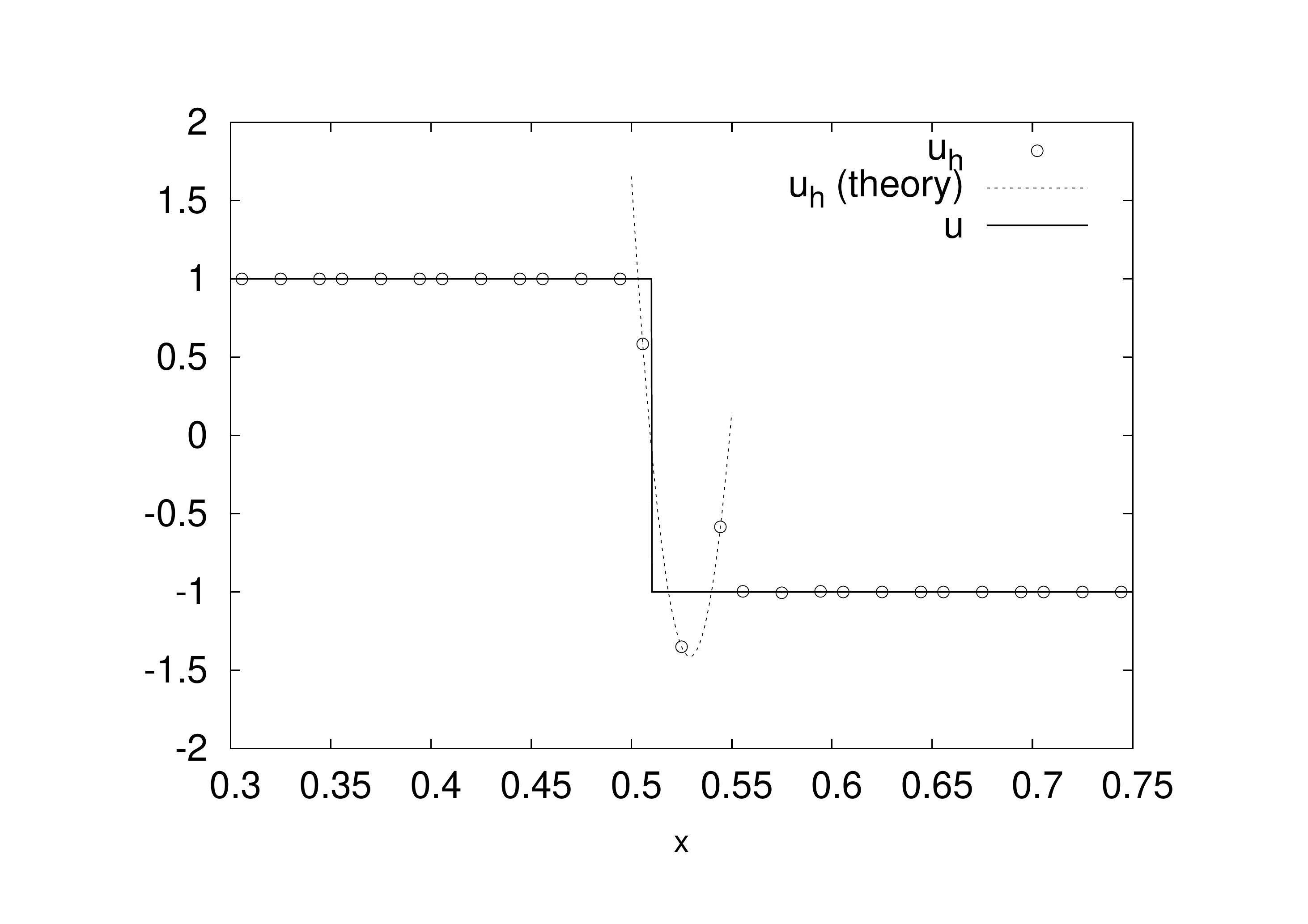 ,width=4.8cm}}\hspace{-0.8cm}
\subfigure{\epsfig{figure=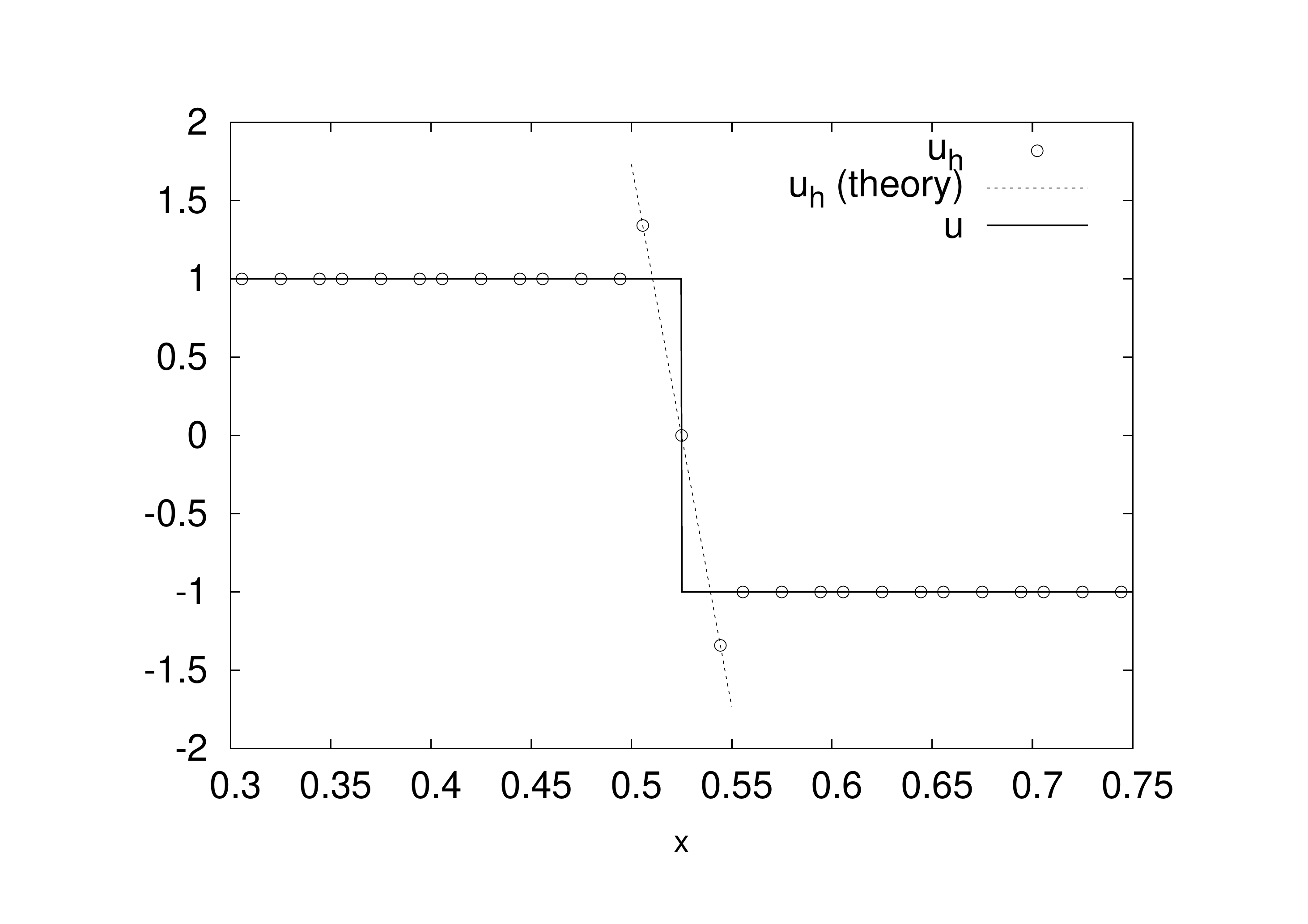 ,width=4.8cm}}\hspace{-0.8cm}
\subfigure{\epsfig{figure=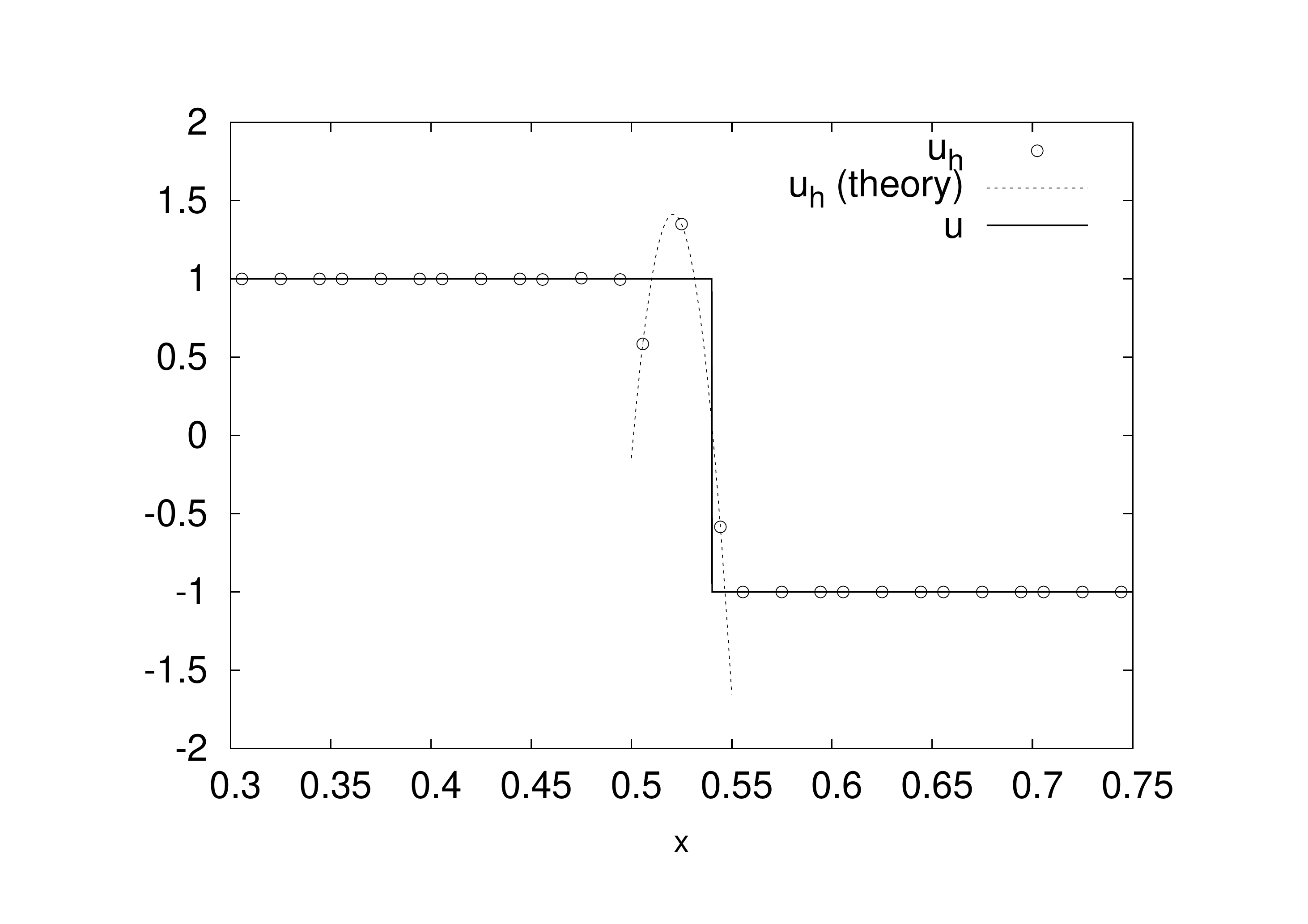 ,width=4.8cm}}\\
\setcounter{subfigure}{0}
\subfigure[$s_c=-\tfrac{3}{5}$]{\epsfig{figure=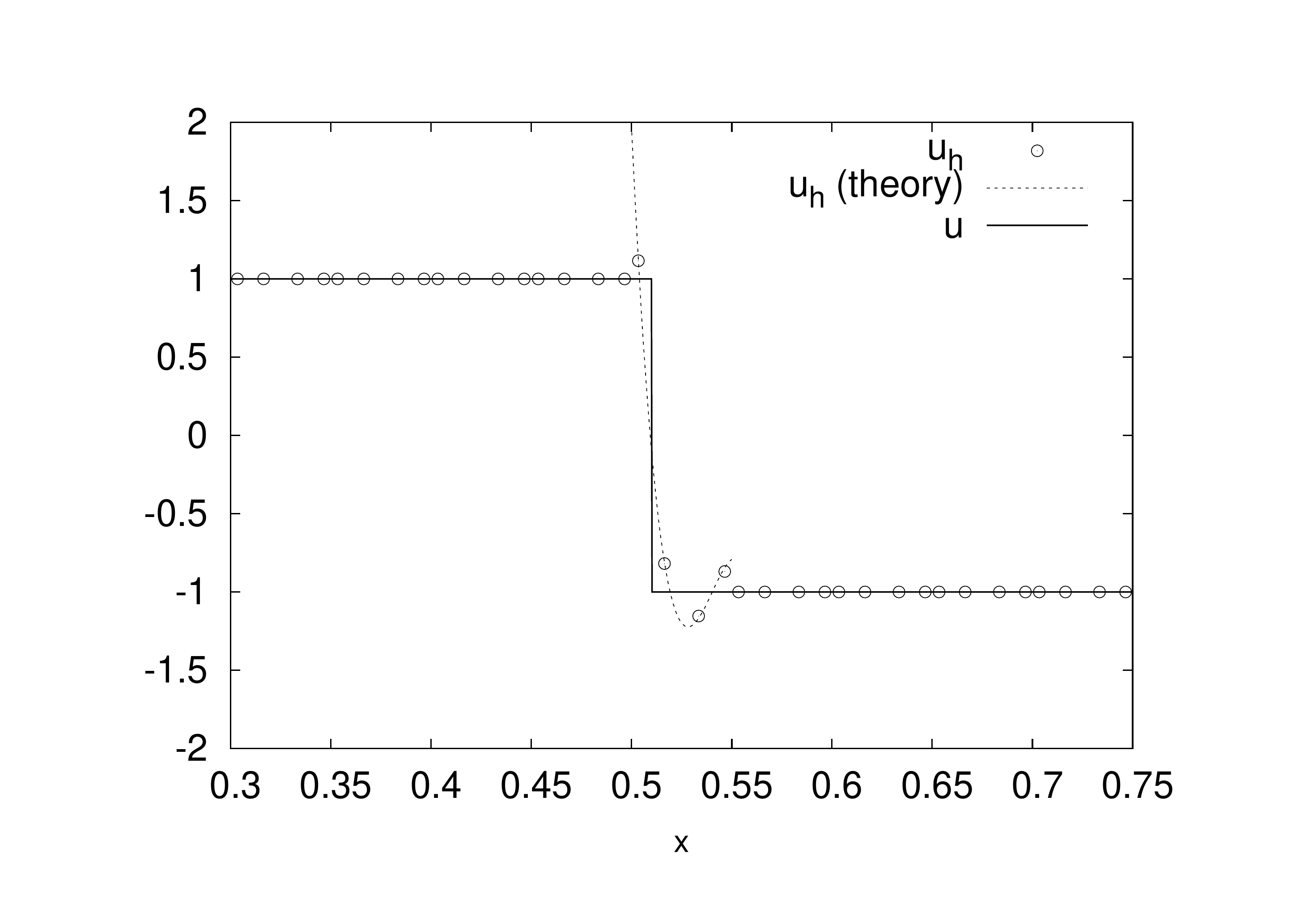 ,width=4.8cm}}\hspace{-0.8cm}
\subfigure[$s_c=0$            ]{\epsfig{figure=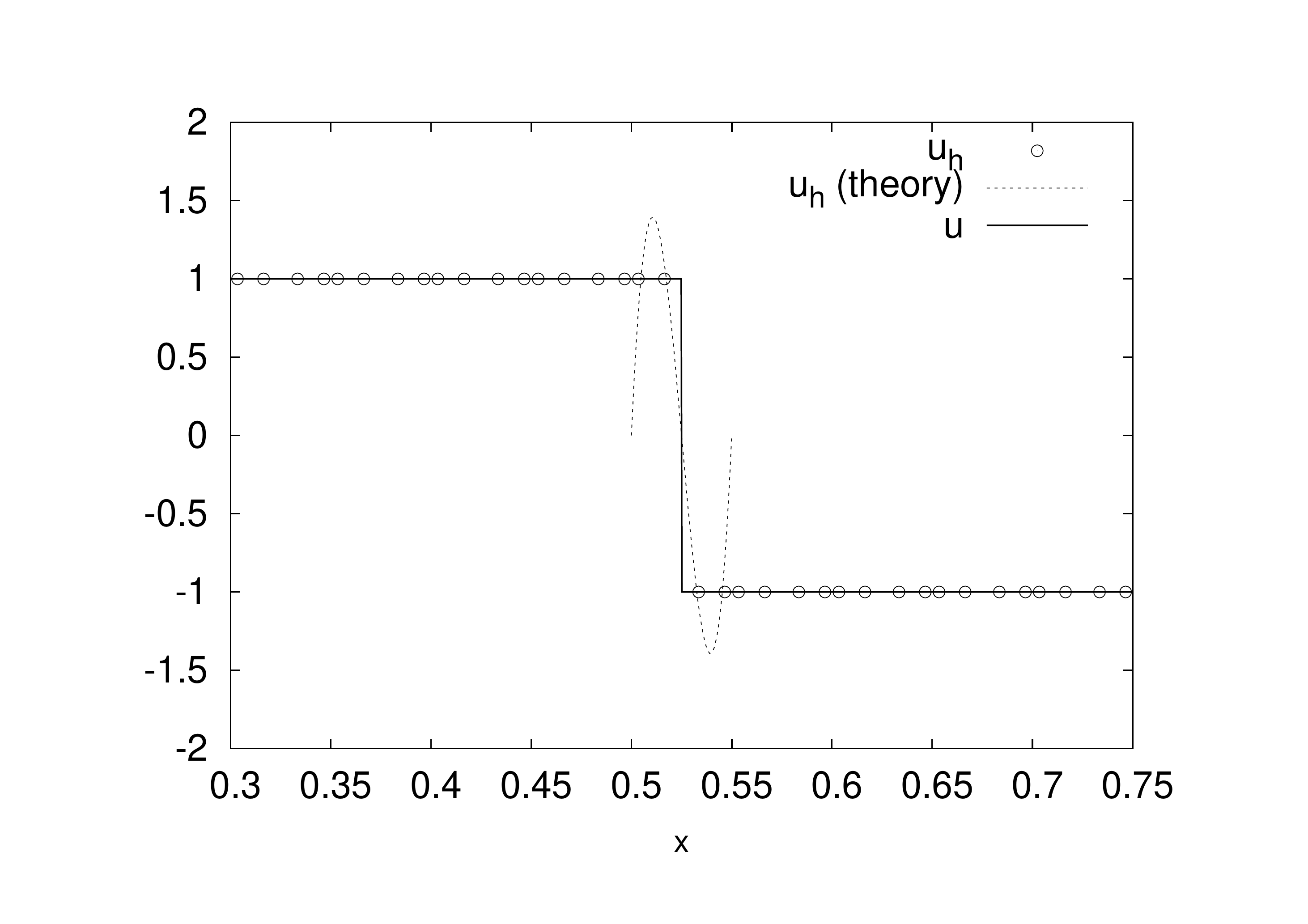 ,width=4.8cm}}\hspace{-0.8cm}
\subfigure[$s_c= \tfrac{3}{5}$]{\epsfig{figure=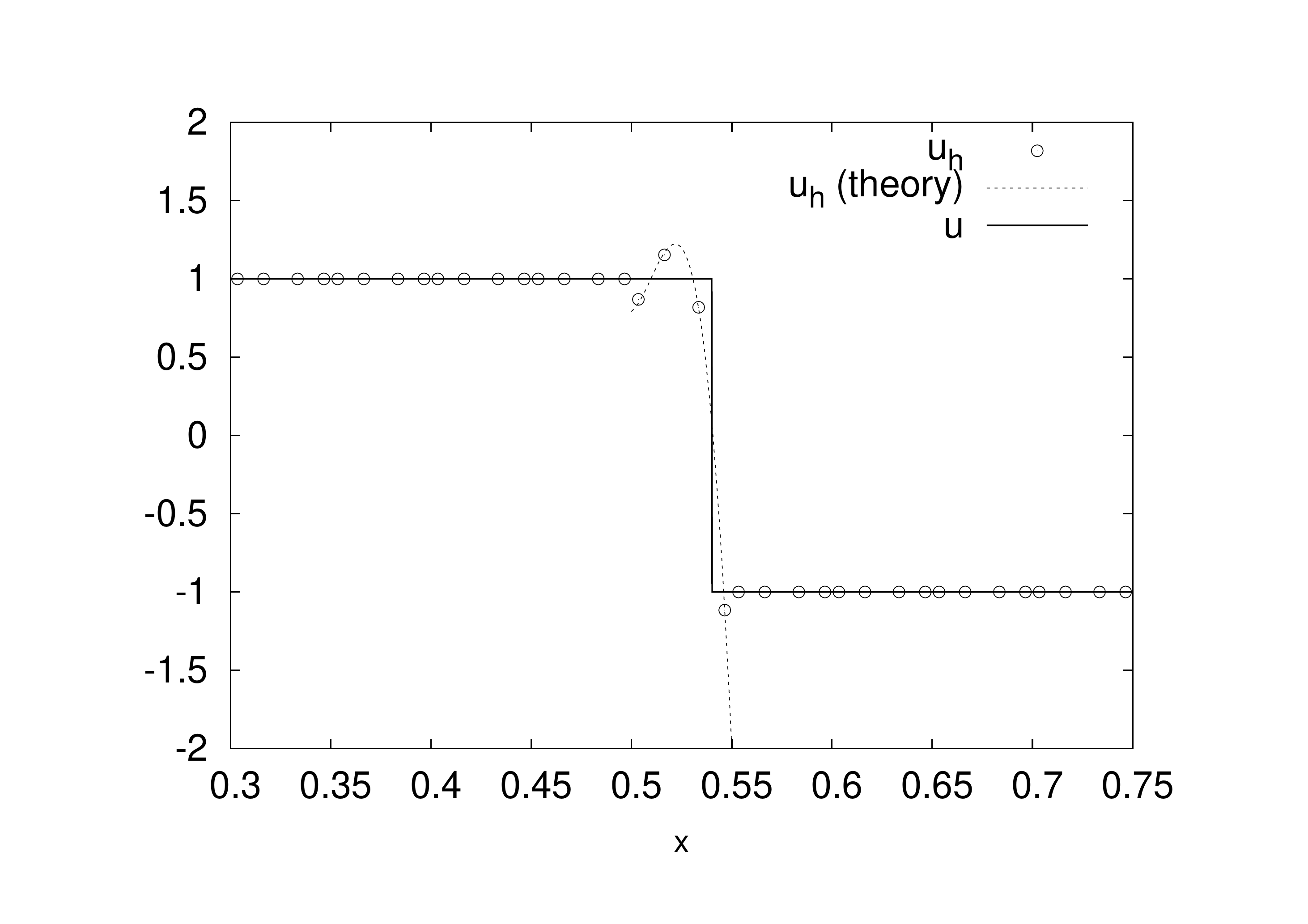 ,width=4.8cm}}
\caption{Steady-state solutions in the shock cell $\kappa_0$ obtained with the Engquist-Osher flux (\ref{eq:OSH_flux}) for $p=1$ (top row), $p=2$ (second row), and $p=3$ (bottom row).}
\label{fig:solutions_OSH}
\end{center}
\end{figure}

\begin{table}
     \begin{center}
     \caption{Illustration of Theorem~\ref{th:exp_decay} for the LLF flux. Results obtained in the supersonic region for $1\leq p \leq 3$ and $s_c=-\tfrac{3}{5}$ (top), $s_c=0$ (middle), and $s_c=\tfrac{3}{5}$ (bottom).}
     \begin{tabular}{|c|c|r|r|l|}
	\hline
	$p$ & $j$ & $u_{j+1/2}^--u_L$ & $u_{j+1/2}^+-u_L$ & $\Big|\tfrac{(U_j^l-u_L\delta_{l,0})_{0\leq l \leq p}}{u_{j+1/2}^+-u_L}\Big|$ \\
	\hline
	    & $10$ & $-1.141$e-$02$ & $-2.245$e-$01$ & $9.639$e-$05$ $5.075$e-$02$ \\
	$1$ & $9$  &  $3.215$e-$05$ & $1.137$e-$02$  & $1.515$e-$08$ $2.827$e-$03$ \\
	    & $8$  & $-2.585$e-$10$ & $-3.215$e-$05$ & $3.452$e-$12$ $8.038$e-$06$\\
	\hline
	    & $10$ & $7.513$e-$02$  & $6.335$e-$01$  & $9.049$e-$04$ $1.425$e-$13$ $1.194$e-$01$ \\
	$2$ & $9$  & $1.359$e-$03$  & $7.513$e-$02$  & $2.461$e-$06$ $1.198$e-$12$ $1.809$e-$02$ \\
	    & $8$  & $4.618$e-$07$  & $1.359$e-$03$  & $2.433$e-$11$ $7.067$e-$11$ $3.396$e-$04$ \\
	\hline
	    & $10$ & $1.632$e-$01$  & $1.003$e-$00$  & $1.918$e-$03$ $4.649$e-$05$ $1.100$e-$03$ $1.635$e-$01$ \\
	$3$ & $9$  & $-6.305$e-$03$ & $-1.648$e-$01$ & $1.721$e-$05$ $1.595$e-$08$ $9.839$e-$06$ $3.823$e-$02$ \\
	    & $8$  & $9.900$e-$06$  & $6.303$e-$03$  & $1.103$e-$09$ $9.581$e-$12$ $6.455$e-$10$ $1.570$e-$03$ \\
	\hline
	\hline
	    & $10$ & $9.637$e-$02$  & $7.320$e-$01$  & $2.187$e-$03$ $1.338$e-$01$ \\
	$1$ & $9$  & $-2.365$e-$03$ & $-9.957$e-$02$ & $9.357$e-$06$ $2.374$e-$02$ \\
	    & $8$  & $1.395$e-$06$  & $2.363$e-$03$  & $1.362$e-$10$ $5.901$e-$04$ \\
	\hline
	    & $10$ & $9.637$e-$02$  & $7.320$e-$01$  & $1.294$e-$03$ $1.293$e-$13$ $1.329$e-$01$ \\
	$2$ & $9$  & $2.214$e-$03$  & $9.637$e-$02$  & $5.089$e-$06$ $9.095$e-$13$ $2.297$e-$02$ \\
	    & $8$  & $1.224$e-$06$  & $2.214$e-$03$  & $4.613$e-$11$ $4.139$e-$11$ $5.529$e-$04$ \\
	\hline
	    & $10$ & $-6.651$e-$02$ & $-5.736$e-$01$ & $5.488$e-$04$ $5.358$e-$06$ $3.137$e-$04$ $1.157$e-$01$ \\
	$3$ & $9$  & $1.061$e-$03$  & $6.624$e-$02$  & $1.215$e-$06$ $1.888$e-$10$ $6.946$e-$07$ $1.602$e-$02$ \\
	    & $8$  & $-2.817$e-$07$ & $-1.061$e-$03$ & $3.889$e-$11$ $5.812$e-$11$ $6.699$e-$11$ $2.653$e-$04$ \\
	\hline
	\hline
	    & $10$ & $1.581$e-$01$  & $9.842$e-$01$  & $4.483$e-$03$ $1.651$e-$01$ \\
	$1$ & $9$  & $-6.460$e-$03$ & $-1.6695$e-$01$& $4.157$e-$05$ $3.865$e-$02$ \\
	    & $8$  & $1.035$e-$05$  & $6.446$e-$03$  & $2.773$e-$09$ $1.606$e-$03$ \\
	\hline
	    & $10$ & $-2.489$e-$01$  & $-1.182$e-$00$  & $5.007$e-$03$ $6.805$e-$14$ $2.054$e-$01$ \\
	$2$ & $9$  & $-1.390$e-$02$  & $-2.489$e-$01$  & $7.744$e-$05$ $3.599$e-$13$ $5.578$e-$02$ \\
	    & $8$  & $-4.799$e-$05$  & $-1.390$e-$02$  & $1.656$e-$08$ $6.728$e-$12$ $3.451$e-$03$ \\
	\hline
	    & $10$ & $-6.6171$e-$03$  & $-1.690$e-$01$  & $1.849$e-$05$ $1.797$e-$08$ $1.056$e-$05$ $3.913$e-$02$ \\
	$3$ & $9$  & $1.0901$e-$05$   & $6.614$e-$03$   & $1.276$e-$09$ $8.861$e-$12$ $7.436$e-$10$ $1.648$e-$03$ \\
	    & $8$  & $-2.9690$e-$11$  & $-1.090$e-$05$  & $4.032$e-$09$ $5.432$e-$09$ $6.223$e-$09$ $2.739$e-$06$ \\
	\hline
   \end{tabular}
    \label{tab:exp_decay_LLF}
    \end{center}
\end{table}

\subsection{Linear stability of steady shock profiles}\label{sec:num_xp_stab}

Figure~\ref{fig:ana_stab_vap_p1_p2} presents the spectra of the linearized operator (\ref{eq:linear_operator}) around steady-state solutions for different polynomial approximations and numerical fluxes when varying the relative shock position in the range $[0,1]$. Results are given for explicit first- and third-order time integration schemes. We are here interested in the slow convergence and even non convergence observed from the numerical experiments in the preceding section and illustrated in Figure~\ref{fig:residuals_p2}. Considering only $p=1$ and $p=2$ approximations thus appears sufficient for our purpose. The results for the Godunov flux have already been determined from the stability analysis in section~\ref{sec:ana_stab} and will be used for comparison. 

\begin{figure}
\begin{center}
\subfigure[]{\epsfig{figure=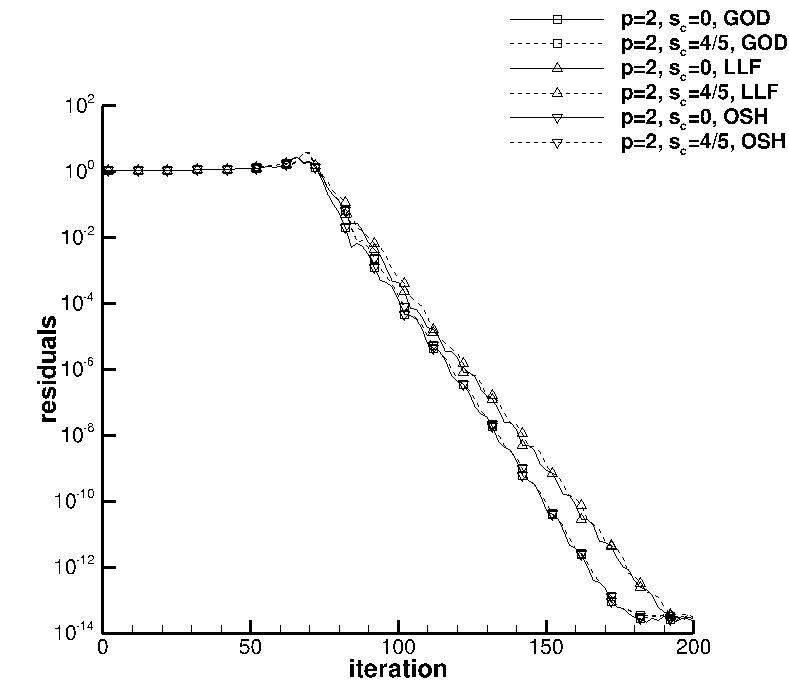 ,width=6cm}}
\subfigure[]{\epsfig{figure=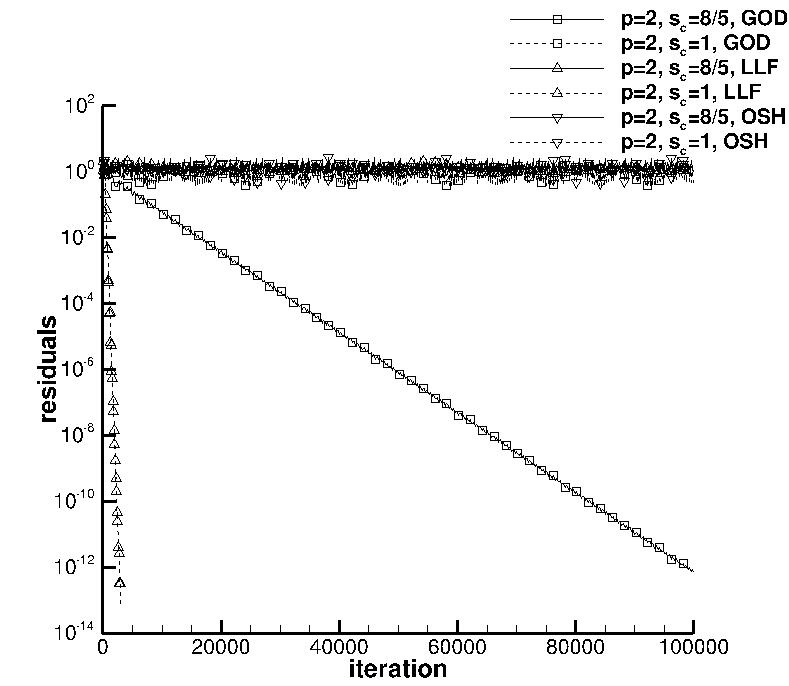 ,width=6cm}}
\caption{Convergence histories to steady-state solutions of the Burgers equation for different kind of numerical fluxes (GOD: Godunov, LLF, OSH: Engquist-Osher) and relative shock positions $0\leq s_c\leq1$ with $p=2$ and $N=20$.}
\label{fig:residuals_p2}
\end{center}
\end{figure}

For the second-order approximation, $p=1$, every eigenvalue is contained in the unit disc. In the case of the Godunov flux, the pairs of complex conjugate eigenvalues correspond to modes located in cells $j\neq 0$ and associated to uniform flow, while real eigenvalues correspond to modes in the shock cell as indicated in Table~\ref{tab:eigenvalues}. The spectra obtained for other numerical fluxes look similar even if a scattering of eigenvalues in cells $j\neq0$ is observed for the LLF flux. Using a third-order time integration scheme is seen to lower the modulus of eigenvalues as expected.

\begin{figure}
\begin{center}
\subfigure{\epsfig{figure=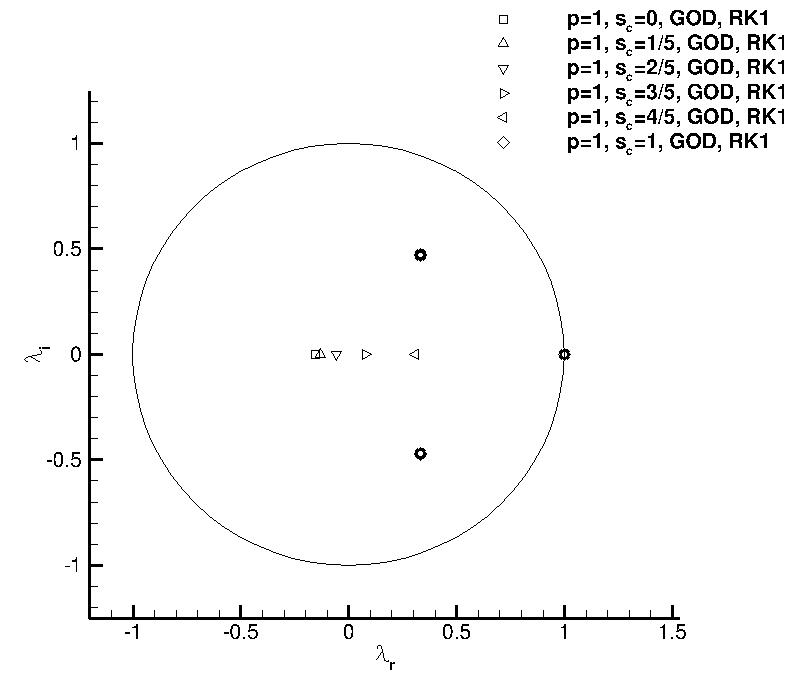 ,width=4.2cm}}\hspace{-0.0cm}
\subfigure{\epsfig{figure=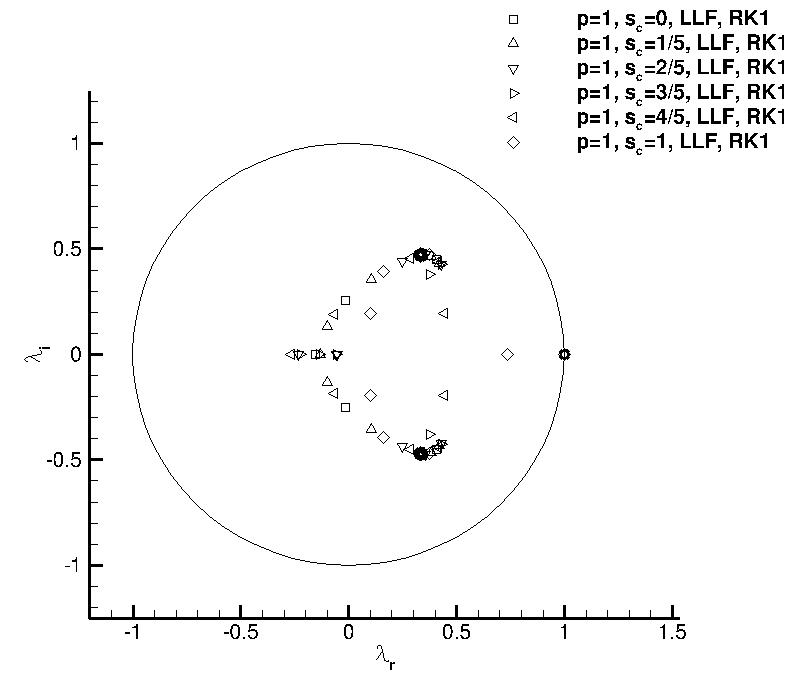 ,width=4.2cm}}\hspace{-0.0cm}
\subfigure{\epsfig{figure=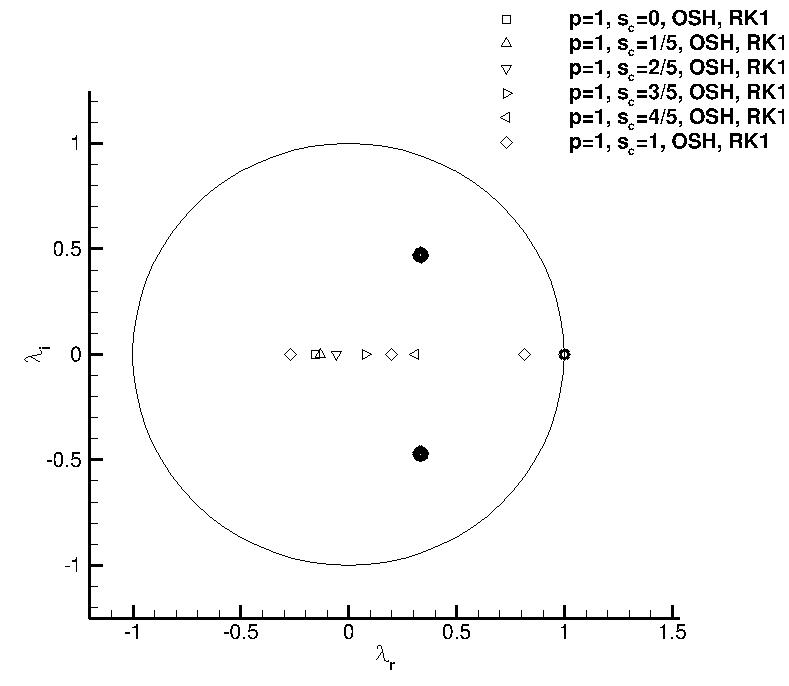 ,width=4.2cm}}\\
\subfigure{\epsfig{figure=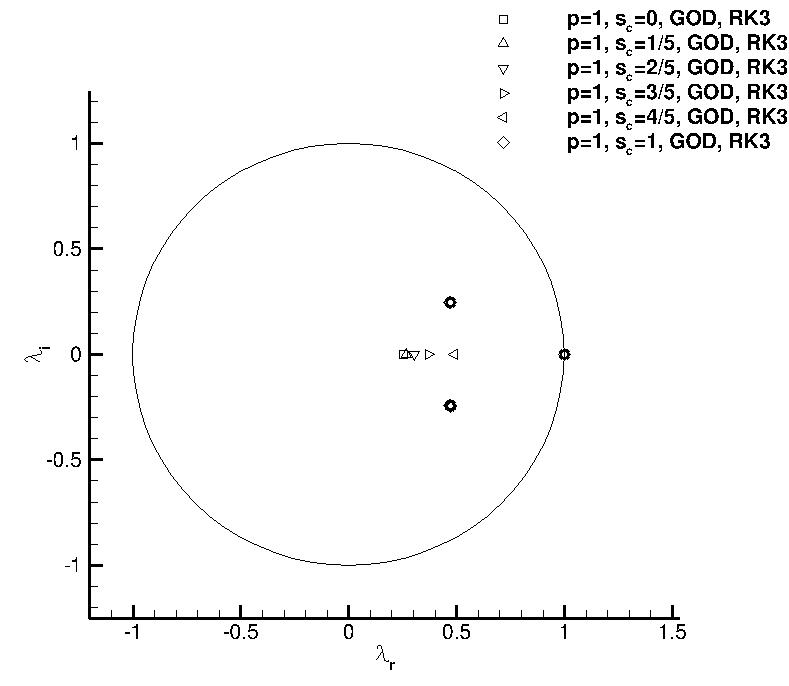 ,width=4.2cm}}\hspace{-0.0cm}
\subfigure{\epsfig{figure=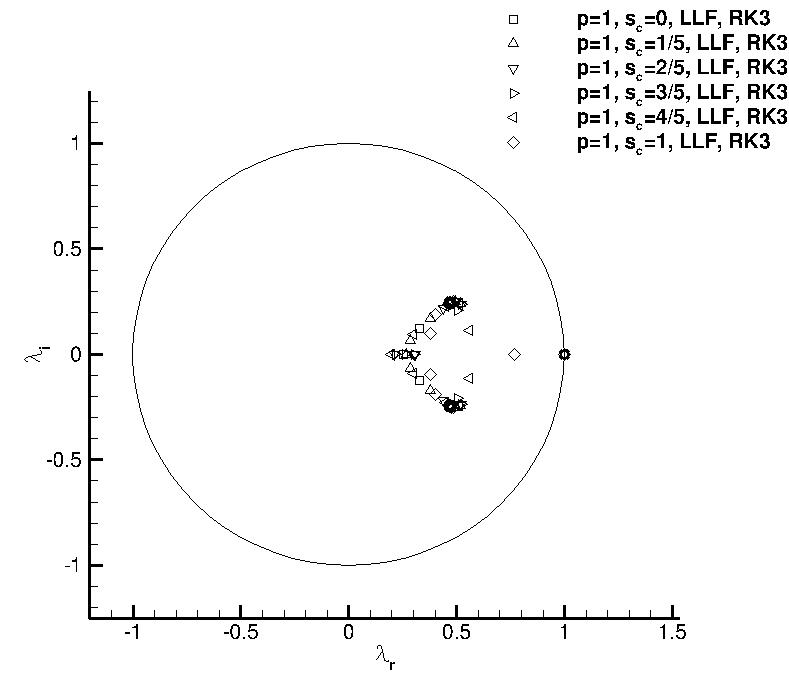 ,width=4.2cm}}\hspace{-0.0cm}
\subfigure{\epsfig{figure=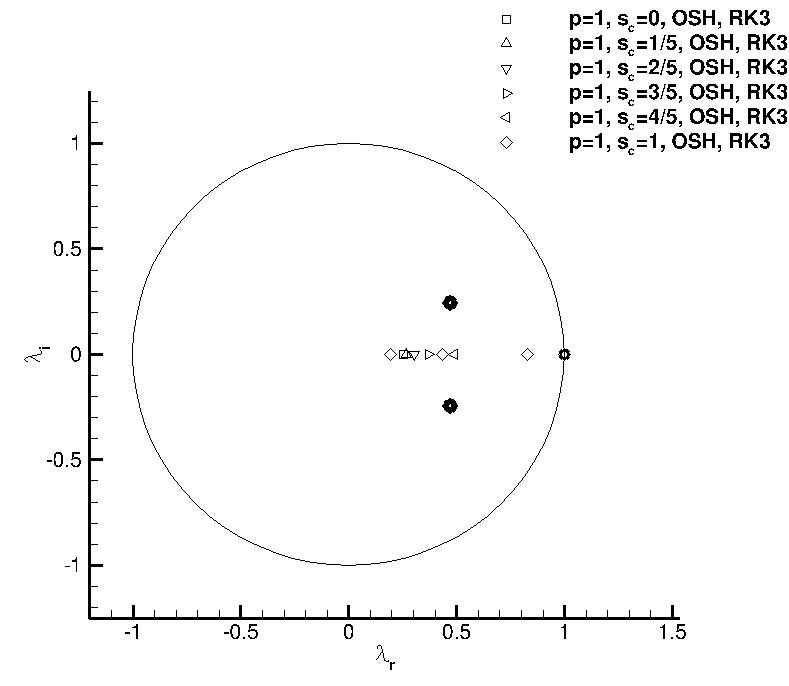 ,width=4.2cm}}\\
\subfigure{\epsfig{figure=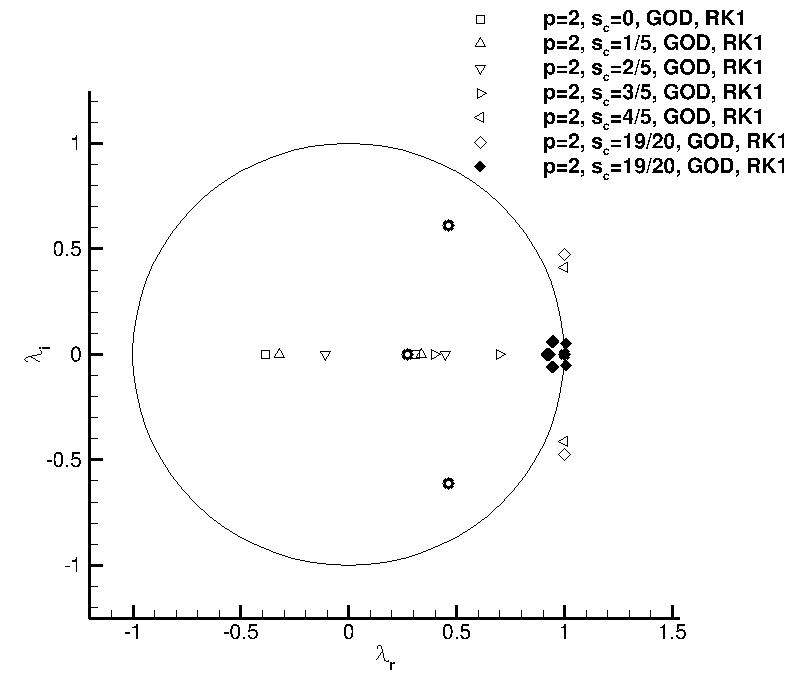 ,width=4.2cm}}\hspace{-0.0cm}
\subfigure{\epsfig{figure=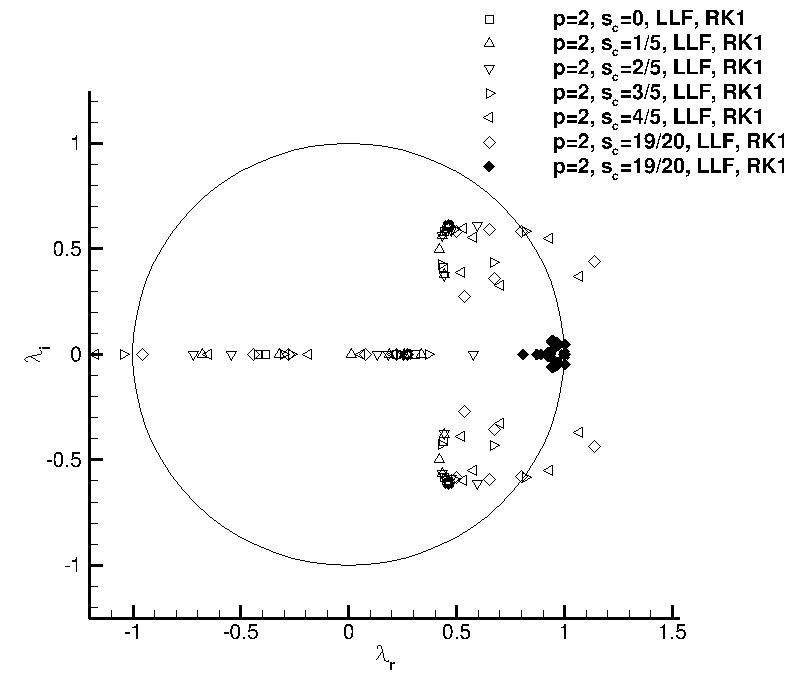 ,width=4.2cm}}\hspace{-0.0cm}
\subfigure{\epsfig{figure=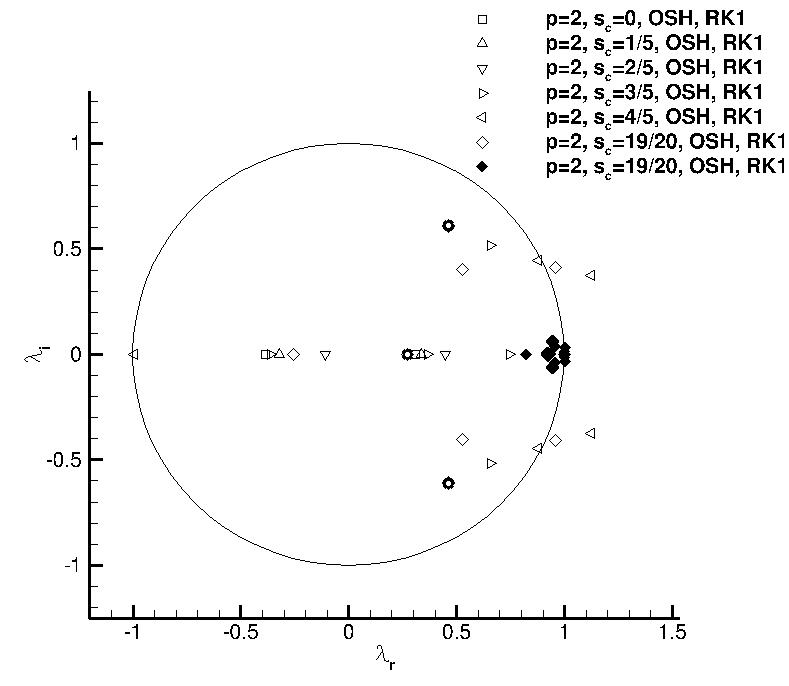 ,width=4.2cm}}\\
\setcounter{subfigure}{0}
\subfigure[Godunov]{\epsfig{figure=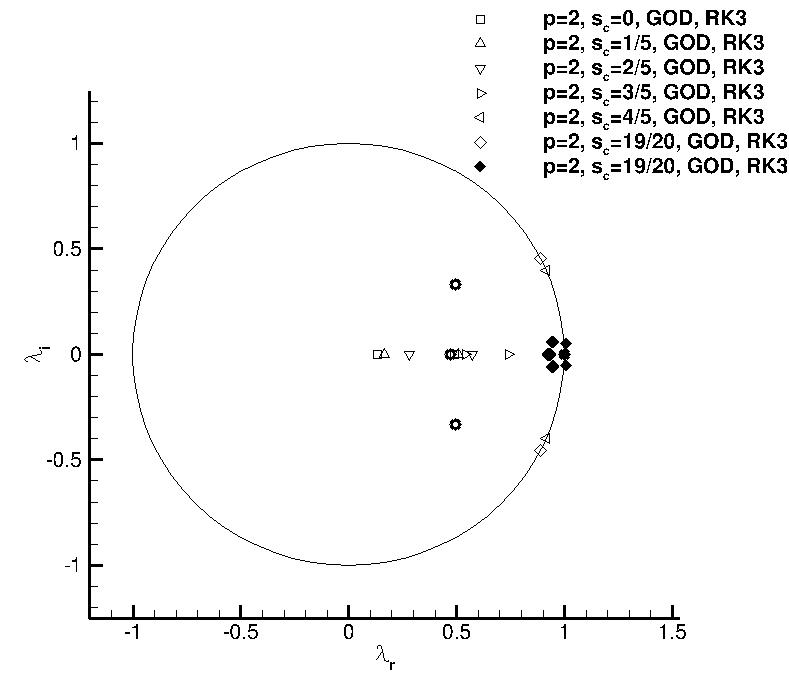 ,width=4.2cm}}\hspace{-0.cm}
\subfigure[LLF]    {\epsfig{figure=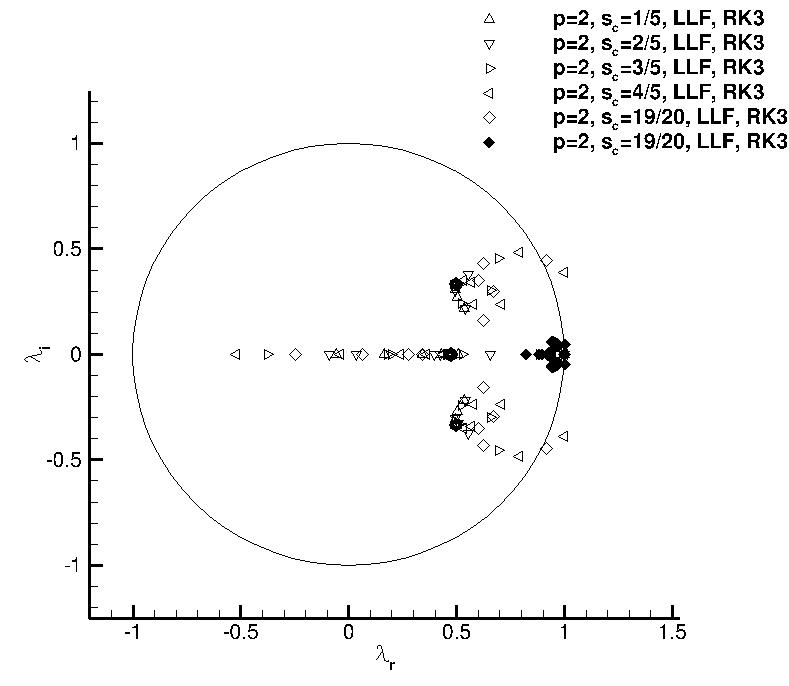 ,width=4.2cm}}\hspace{-0.cm}
\subfigure[Osher]  {\epsfig{figure=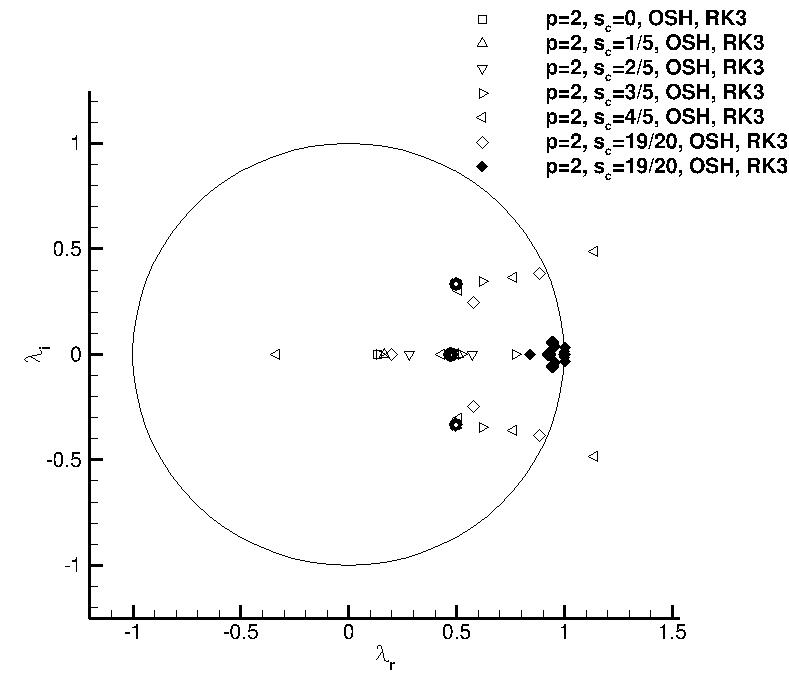 ,width=4.2cm}}
\caption{Eigenspectra of the linearized operator (\ref{eq:linear_operator}) around steady-state solutions of the Burgers equation for different kind of numerical fluxes (GOD: Godunov, LLF, and OSH: Engquist-Osher) and $0\leq s_c\leq1$, using the forward Euler (RK1) and explicit third-order Runge-Kutta (RK3) time integration schemes with $1\leq p\leq2$, $N=20$, and $\lambda=1/(2p+1)$ (open symbols) or $\lambda=0.1/(2p+1)$ (full symbols). The unit circle ${\cal C}(0,1)$ denotes the stability domain.}
\label{fig:ana_stab_vap_p1_p2}
\end{center}
\end{figure}

The third-order approximation, $p=2$, with the forward-Euler scheme exhibits unstable modes when the shock position tends to an interface. Here again, the spectra for every numerical scheme look similar. The complex eigenvalues are concentrated in the right half-plane, while some real eigenvalues are negative and may become unstable. From the stability analysis for the Godunov flux in Table~\ref{tab:eigenvalues}, one may distinguish among the former modes between stable eigenvalues associated to the uniform flow and unstable eigenvalues associated to the shock. For other numerical fluxes, one recovers the eigenvalues in cells $j\neq 0$ concentrated at the same locations $1-\lambda_L(3+\gamma_1)\simeq0.272$ and $1-\tfrac{\lambda_L}{2}(6-\gamma_1\pm\gamma_2i)\simeq0.464\pm0.610i$. Likewise, modes associated to the shock are also very similar. The eigenvalues are concentrated at the unit circle and become unstable as the shock position tends to the interface. We stress that lowering the CFL value (see eigenspectra for $p=2$ and $s_c=19/20$) shows that these modes remain unstable as predicted in Table~\ref{tab:eigenvalues} for $|s_c|>\tfrac{2}{3}$. Figure~\ref{fig:ana_stab_vep_p2} displays the structure of associated eigenvectors for two different shock positions corresponding to unstable modes. In section~\ref{sec:ana_stab_god}, it was shown that these modes only affect the highest DOF and result in a quadratic evolution with support in the shock cell. The structure of the modes for the Engquist-Osher and LLF fluxes is very similar and mainly concentrated in the shock cell. These modes are damped with the Runge-Kutta scheme, but are clustered around the unit circle and remain unstable for large $s_c$ values. These modes are expected to slow-down or prevent the convergence of some computations as observed in Figure~\ref{fig:residuals_p2}b and in the numerical experiments in the preceding section.

\begin{figure}
\begin{center}
\subfigure[$s_c=\tfrac{7}{10}$]{\epsfig{figure=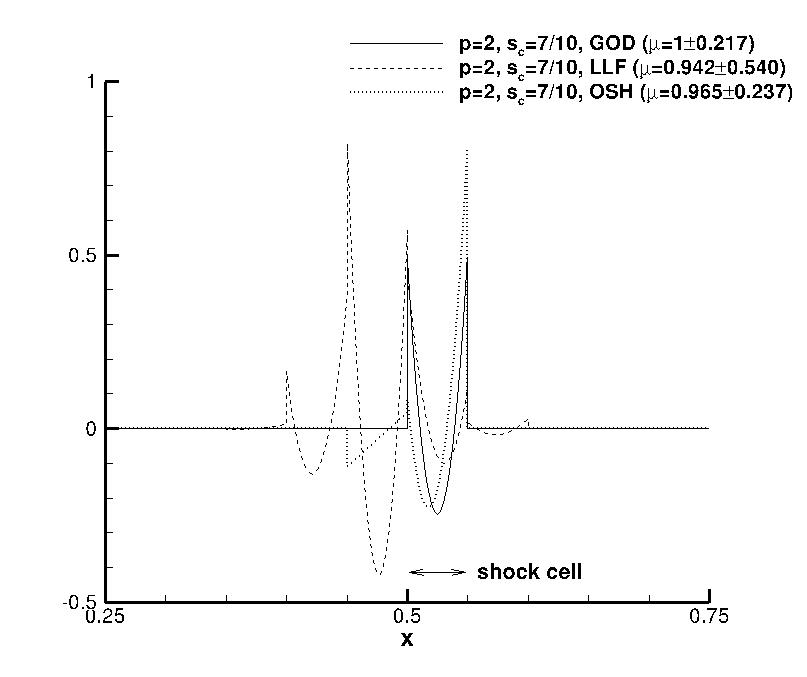 ,width=6cm}}
\subfigure[$s_c=\tfrac{4}{5}$] {\epsfig{figure=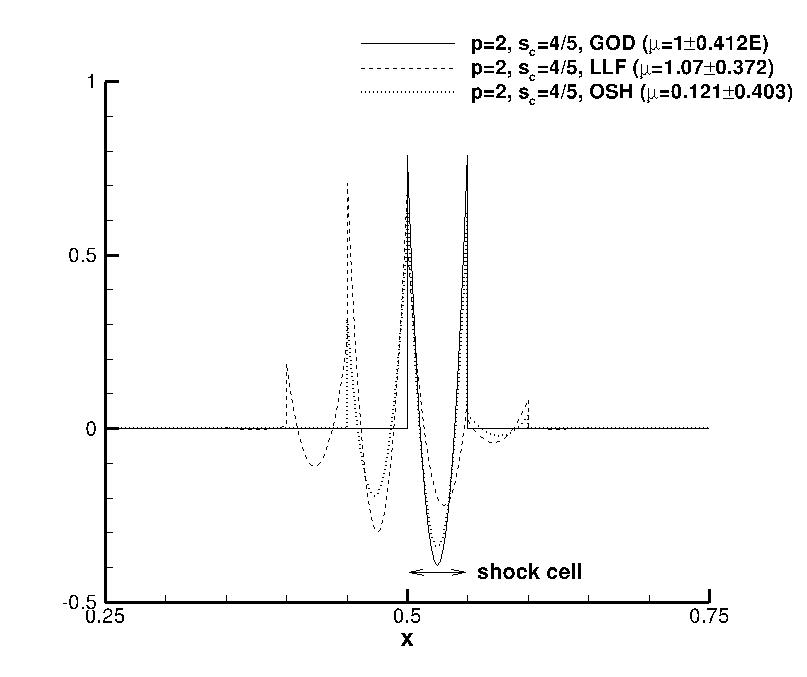 ,width=6cm}}
\caption{Real part of eigenvector associated to most amplified eigenvalues of the linearized operator (\ref{eq:linear_operator}) around steady-state solutions of the Burgers equation for different kind of numerical fluxes (GOD: Godunov, LLF, OSH: Engquist-Osher), using the forward Euler (RK1) and explicit third-order Runge-Kutta (RK3) time integration schemes with $p=2$, $N=20$, and $\lambda=\tfrac{1}{2p+1}$.}
\label{fig:ana_stab_vep_p2}
\end{center}
\end{figure}

\subsection{Stabilization with the spectral vanishing viscosity method}\label{sec:SVM}

The analytical results in Theorem~\ref{th:exp_decay} and section~\ref{sec:ana_stab_god_burgers} and numerical experiments in Table~\ref{tab:exp_decay_LLF} suggest the possibility of removing oscillations in the supersonic and subsonic regions by damping the higest DOF, say $U_j^p$ with $j\neq0$. Likewise, the structure of solutions in Theorem~\ref{th:shock_sol_p0123} and the stability analysis in section~\ref{sec:ana_stab} (see, {\it e.g.}, Table~\ref{tab:eigenvalues_sc_pm1}) support the necessity of acting on a larger range of DOFs, say $(U_j^l)_{m_j\leq l\leq p}$ where $m_j>0$, whenever the solution becomes discontinuous in cell $\kappa_j$. These observations motivate the application of the spectral viscosity method \cite{maday_tadmor89,maday_etal93} to the DG discretization. In this method, one supplements the explicit residuals (\ref{eq:local_residuals}) with an artificial viscosity of the form
\begin{equation*}
 -\varepsilon\int_{\kappa_j} Q\partial_xu_h\partial_xv_hdx,
\end{equation*}
\noindent where $\varepsilon>0$ plays the role of a viscosity coefficient and $Q$ denotes the spectral viscosity operator
\begin{equation*}
 Qv_h = \sum_{l=0}^p Q_j^lV_j^l\phi_j^l, \quad \forall v_h = \sum_{l=0}^p V_j^l\phi_j^l, \;\kappa_j\in\Omega_h,
\end{equation*}
\noindent with $Q_j^l=\exp\big(-(\tfrac{l-p}{m_j-p})^2\big)$ if $l\geq m_j$ and $Q_j^l=0$ otherwise. Here, we use a slightly different implementation from \cite{maday_etal93}. Indeed, using (\ref{eq:matN_kl}) we have
\begin{equation*}
 \int_{\kappa_j} Q\partial_xu_hd_x\phi_j^kdx = \sum_{l=m_j}^{p-1} Q_j^lD_j^l\int_{\kappa_j}\phi_j^ld_x\phi_j^kdx = \sum_{l=m_j}^{p-1} N_{k,l}Q_j^lD_j^l, \quad \forall \kappa_j\in\Omega_h,
\end{equation*}
\noindent where the coefficients $D_j^l$ are defined by $\partial_xu_h=\sum_{l=1}^{p}U_j^ld_x\phi_j^l=\sum_{l=0}^{p-1}D_j^l\phi_j^l$. Using the matrix ${\bf M}$ defined in (\ref{eq:matM_kl}), we get
\begin{equation*}
 D_j^l = \frac{2}{h}\sum_{n=l+1}^pM_{n,l}U_j^n, \quad \forall 0\leq l< p, \kappa_j\in\Omega_h.
\end{equation*}

We propose to apply this method in the context of our numerical experiments by selecting the range of modes where the spectral viscosity is applied according to the local smoothness of the solution: we set $m_j=1$ when the solution is irregular and $m_j=p-1$ otherwise. We apply the shock detection technique from \cite{krivodonova_etal04} to test the local smoothness of the solution.

Figure \ref{fig:SVM_effect} presents the convergence histories and solutions for the LLF numerical flux and third and fourth approximation orders. Results obtained with the above method and a viscosity coefficient $\varepsilon=h/(p+1)$ are compared to that obtained without artificial dissipation. We observe the stabilization of the computations by the numerical viscosity. The amplitude of the oscillations are also lowered and are effectively damped in the uniform region where viscosity is applied only to the highest DOFs. The technique allows the computations to converge to steady-state solutions and to lower oscillations while keeping accuracy in smooth regions of the flow. 

\begin{figure}
\begin{center}
\subfigure{\epsfig{figure=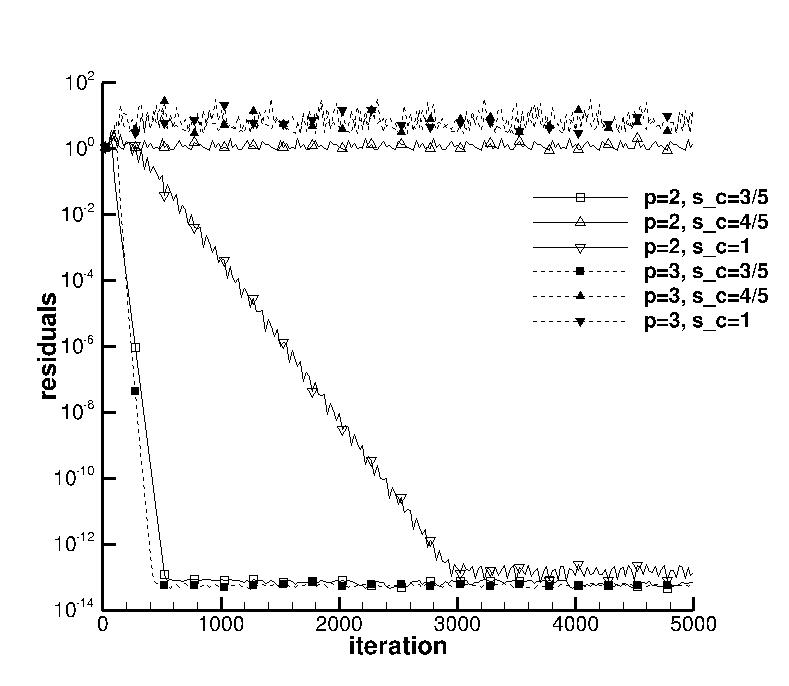 ,width=6cm}}
\subfigure{\epsfig{figure=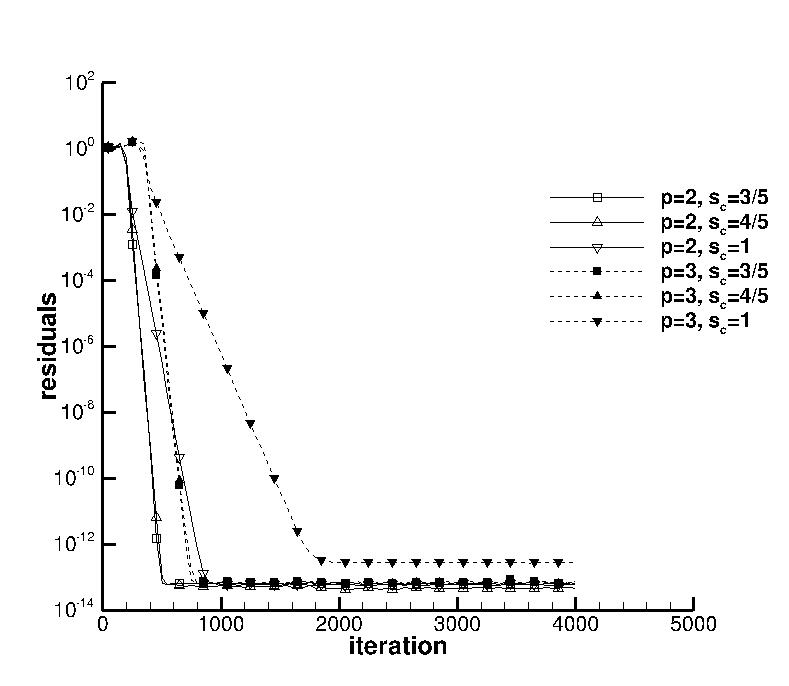 ,width=6cm}}\\
\setcounter{subfigure}{0}
\subfigure[]{\epsfig{figure=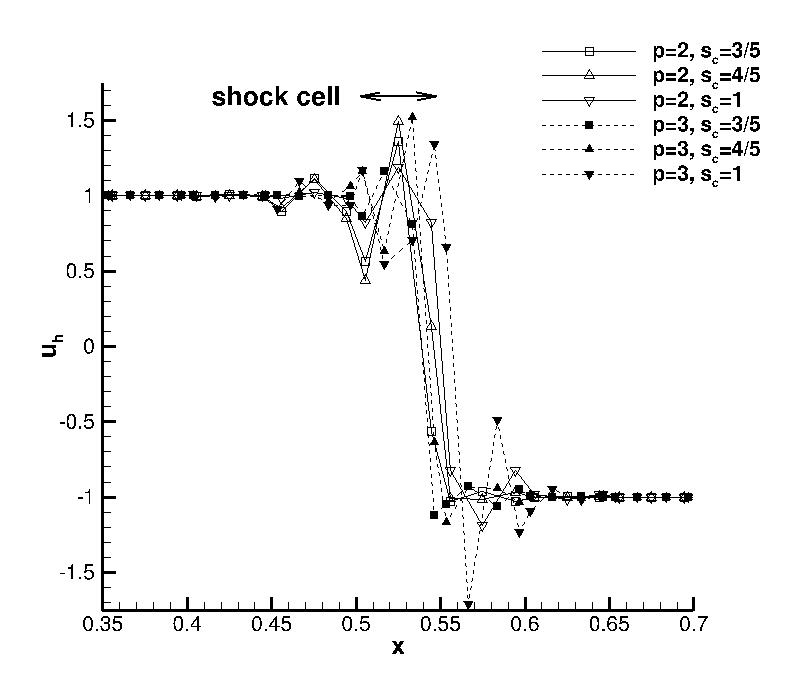 ,width=6cm}}
\subfigure[]{\epsfig{figure=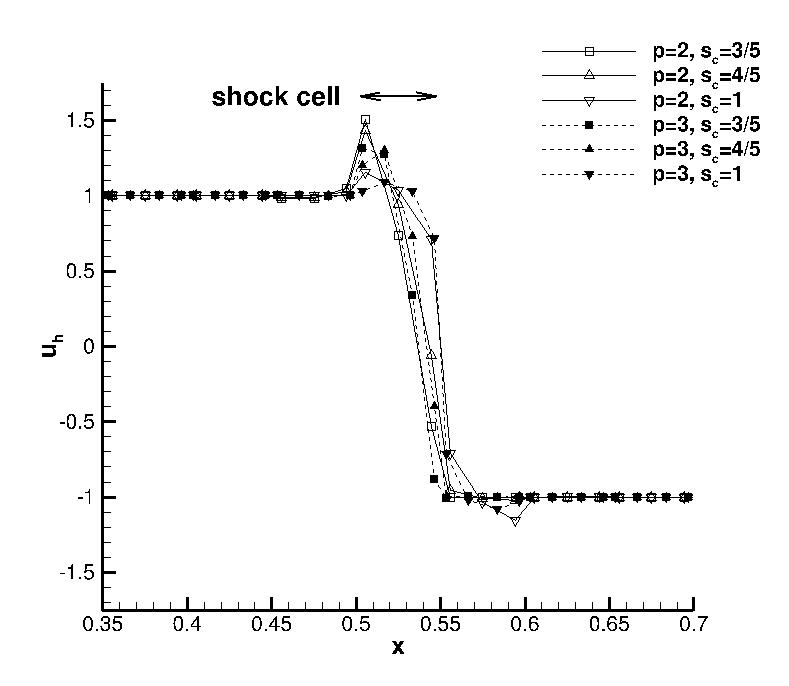 ,width=6cm}}
\caption{Convergence histories (top) and final solutions (bottom) without (left) and with the spectral viscosity method (right) of the Burgers equation using the LLF flux with $p=2$ and $p=3$.}
\label{fig:SVM_effect}
\end{center}
\end{figure}

%
%
\section{Concluding remarks}\label{sec:conclusions}

Discrete shock profiles for a scalar conservation law with a convex flux discretized with a DG method have been analyzed. Using the Godunov numerical flux, we show existence of stationary profiles that are oscillating for polynomial degree $p\geq1$ into one discretization cell only. The oscillations may vanish when the shock is located at an interface of the mesh. A linear stability analysis of the shock profiles show however that these latter solutions may be unstable. Considering the inviscid Burgers equation, these profiles are constructed analytically for $p\leq3$ and are shown to be parametrized by the shock strength and its relative position in the cell.

The extension of this analysis to other numerical fluxes is also investigated. A theoretical analysis shows that oscillations propagate in neighboring cells but decay exponentially fast from the shock position for some class of monotone numerical fluxes. Moreover, numerical experiments indicate that the shock profiles present strong similarities with the profiles obtained with the Godunov flux which may be considered as a relevant model for the analysis of the DG method.

Finally, these results show that, when using a hierarchical functional basis, only the highest DOFs are responsible, at first order, of linear instability and propagation of oscillations in neighboring cells. As an application of this property, the spectral vanishing viscosity method is successfully used to stabilize computations and damp oscillations through a selective action on  DOFs.

One main contribution of this work is the analysis of oscillations and unstable character of the DG method for scalar conservation laws. These results may help to design specific stabilization techniques and future investigations will focus on these methods.

\section*{Acknowledgments}
The author would like to thank Prof. Alain Lerat from Arts et M\'etiers ParisTech and Jean-Luc Akian from Onera for valuable discussions and their constructive comments. 

\appendix\section*{}\label{app:proof_sol_p123} 
In this appendix, we prove the results of Theorem \ref{th:shock_sol_p0123} which gives the solution of the numerical scheme in the cell where the shock is located as a function of the relative shock position, $s_c$, given by (\ref{eq:def_sc}). We note that the solution for the first DOF $U_{0}^0=u_Ls_c$ from (\ref{eq:shock_sol_DOF0}) in Lemma \ref{th:shock_sol_DOF0}. We also give the following result which holds for the Burgers equation and will be used to evaluate the local residuals associated to the equation for the second DOF: 
\begin{equation}\label{eq:intvol_p1}
 \int_{\kappa_j} f(u_h)d_x\phi_j^1 dx = \frac{2}{h}\int_{\kappa_j} \frac{1}{2}\big(\sum_{l=0}^p U_j^l\phi_j^l\big)^2 dx =  \sum_{l=0}^p\frac{(U_j^l)^2}{2l+1},
\end{equation}

\noindent where we have used the orthogonality of the function basis (\ref{eq:ortho_basis}). According to the assumption of Theorem \ref{th:shock_sol_p0123}, the exact shock position is assumed to satisfy $x_{-1/2}<x_c<x_{1/2}$. From Theorems~\ref{th:sol_super_zone_conv_flx} and \ref{th:sol_sub_zone_conv_flx}, it follows that the trace at the left interface satisfies $u_{-1/2}^-=u_L$ and a similar relation holds at the right interface $u_{1/2}^+=u_R$. Further assuming (\ref{eq:sign_condition_conv-flx}), the numerical fluxes at the left and right interfaces of cell $j_c=0$ read $\hat{h}(u_{-1/2}^-,u_{-1/2}^+)=f(u_{-1/2}^-)=f(u_L)$ and $\hat{h}(u_{1/2}^-,u_{1/2}^+)=f(u_{1/2}^+)=f(u_R)$. As a consequence, the equation for the first DOF always reduces to the trivial relation $f(u_L)=f(u_R)$. As suggested in Theorem~\ref{th:shock_sol_p0123}, we adopt the following notation for the DOFs in the cell: $U_{0}^l = u_Lu_l$, for all $0\leq l \leq p$, where $u_0=s_c$ according to (\ref{eq:shock_sol_DOF0}). Finally, we observe that according to (\ref{eq:LR_traces}), assumption (\ref{eq:sign_condition_conv-flx}) may be rewritten as follows
\begin{equation}\label{eq:CNS_sign_cond}
 \frac{u_{-1/2}^+}{u_L}=\sum_{l=0}^p(-1)^lu_l > -1,\quad  \frac{u_{1/2}^-}{u_L}=\sum_{l=0}^pu_l < 1.
\end{equation}

\subsection{solution for $p=1$}

Using (\ref{eq:intvol_p1}), the numerical scheme for the second DOF reads
\begin{equation}\label{eq:SN_shock_p1}
 -u_L^2(u_0^2 + \frac{u_1^2}{3}) + f(u_L)+f(u_R) = u_L^2\Big(-s_c^2-\frac{u_1^2}{3} + 1\Big) = 0,
\end{equation}
\noindent whose solution reads $u_1=\pm\sqrt{3(1-s_c^2)}$. The condition (\ref{eq:CNS_sign_cond}a) reads $s_c-u_1>-1$ and restricts the solution $u_1>0$ over $(\tfrac{1}{2},1)$, while the condition (\ref{eq:CNS_sign_cond}b) reads $s_c+u_1<1$ and restricts the solution $u_1>0$ over $(-1,-\tfrac{1}{2})$. The negative solution is the only one valid over $(-1,1)$ which gives the result (\ref{eq:shock_sol_p1}).

\subsection{solution for $p=2$}

Equations for the second and third DOFs read
\begin{subeqnarray*}
 -\int_{\kappa}f(u_h)d_x\phi_{0}^1dx + f(u_L)+f(u_R) &=& 0,\\
 -\int_{\kappa}f(u_h)d_x\phi_{0}^2dx + f(u_L)-f(u_R) &=& 0,
\end{subeqnarray*}
\noindent and give
\begin{subeqnarray}\label{eq:SN_shock_p2}
 -s_c^2 - \frac{u_1^2}{3} - \frac{u_2^2}{5} + 1 &=& 0,\\
 -2u_1\big(s_c+\frac{2}{5}u_2\big) &=& 0.
\end{subeqnarray}

The second equation has a trivial solution $u_1=0$ which gives $u_2=\pm\sqrt{5(1-s_c^2)}$. The validity of these solutions is imposed by (\ref{eq:CNS_sign_cond}) which reduces to $-1\leq s\pm\sqrt{5(1-s_c^2)}<1$ because the left and right traces are identical. Therefore, the solution $u_2=\sqrt{5(1-s_c^2)}$ is valid when $s<-\tfrac{2}{3}$ and the solution $u_2=-\sqrt{5(1-s_c^2)}$ is valid when $s> \tfrac{2}{3}$ which correspond to solutions (\ref{eq:shock_sol_p2}a,c).

The second solution of (\ref{eq:SN_shock_p2}b) reads $u_2=-\tfrac{5}{2}s_c$ and then (\ref{eq:SN_shock_p2}a) leads to $u_1=\pm\sqrt{3(1-9s_c^2/4)}$ which are valid only if $|s_c|\leq\tfrac{2}{3}$. Again, conditions (\ref{eq:CNS_sign_cond}) impose the negative solution $u_1=-\sqrt{3(1-9s_c^2/4)}$.

\subsection{solution for $p=3$}

The numerical scheme for the second, third and fourth DOFs give
%
\begin{subeqnarray}\label{eq:SN_shock_p3}
 -s_c^2 - \frac{u_1^2}{3} - \frac{u_2^2}{5} - \frac{u_3^2}{7} + 1 &=& 0,\\
 -2u_1\big(s_c+\frac{2}{5}u_2\big) - \frac{18}{35}u_2u_3 &=& 0,\\
 -s_c^2 - u_1^2 -\frac{17}{35}u_2^2 - \frac{u_3^2}{3} -2s_cu_2 - \frac{6}{7}u_1u_3 +1 &=& 0.
\end{subeqnarray}

We first observe that for a solution $(u_1,u_2,u_3)$ to (\ref{eq:SN_shock_p3}), then $(-u_1,u_2,-u_3)$ is also solution. Therefore, we only look for solutions with $u_1<0$ and deduce the other ones by symmetry. Then, we also note that the choice $u_2=0$ imposes $u_1=0$ through (\ref{eq:SN_shock_p3}b) and then (\ref{eq:SN_shock_p3}a,c) reduce to $1-s_c^2-\tfrac{u_3^2}{7}=0$ and $1-s_c^2-\tfrac{u_3^2}{3}=0$ which is possible only if $u_3=0$ and $s_c=\pm1$ but is excluded by the strict inequalities in (\ref{eq:CNS_sign_cond}). In the following, we thus consider $u_2\neq0$. Equation (\ref{eq:SN_shock_p3}b) induces $u_3=-\tfrac{7u_1}{9u_2}(5s_c+2u_2)$ and subtracting $7\times$(\ref{eq:SN_shock_p3}a) and $3\times$(\ref{eq:SN_shock_p3}c), multiplying by $u_2$, one obtains
\begin{equation*}
 -10\big(\frac{u_2}{3}+s_c\big)u_1^2 + \frac{2}{35}u_2^3+6s_cu_2^2+4(1-s_c^2)u_2 = 0,
\end{equation*}

\noindent whose solution for $u_1$ reads
\begin{equation}\label{eq:sol_shock_p3_u1}
 u_1^2 = \frac{3u_2}{5(u_2+3s_c)}\Big(\frac{u_2^2}{35}+3s_cu_2+2(1-s_c^2)\Big),
\end{equation}
\noindent and one deduces the solution for $u_3$ from 
\begin{equation}\label{eq:sol_shock_p3_u3}
 u_3=-\tfrac{7u_1}{9u_2}(5s_c+2u_2)
\end{equation}
\noindent which gives
\begin{equation*}
 u_3^2 = \frac{49(5s_c+2u_2)^2}{135u_2(u_2+3s_c)}\Big(\frac{u_2^2}{35}+3s_cu_2+2(1-s_c^2)\Big).
\end{equation*}

Substituting the last result and (\ref{eq:sol_shock_p3_u1}) into (\ref{eq:SN_shock_p3}a), one obtains the equation for $u_2$ only:
\begin{equation}\label{eq:sol_shock_p3_u2}
 2(7s_c+u_2)\Big(4u_2^3+7s_cu_2^2+14s_c^2u_2-\frac{7}{2}u_2+7s_c(1-s_c^2) \Big) = 0.
\end{equation}

Taking symmetries into account, the system (\ref{eq:SN_shock_p3}) has at most height real solutions. The first root of equation (\ref{eq:sol_shock_p3_u2}) $u_2=-7s_c$ leads to the first solution (\ref{eq:shock_sol_p3}a):
\begin{equation*}
 u_1 = -u_3 =\pm \frac{1}{5}\sqrt{\frac{21}{2}}\sqrt{5-54s_c^2},
\end{equation*}
%
\noindent if $|s_c|\leq\sqrt{\tfrac{5}{54}}$ from which we again retain the solution $u_1<0$. Assumption (\ref{eq:CNS_sign_cond}) requires $u_{j-1/2}^+=-7s_c>-1-s_c$ and $u_{j+1/2}^-=-7s_c<1-s_c$ which is satisfied if and only if $-\tfrac{1}{6}<s_c<\tfrac{1}{6}$. The three other roots of equation (\ref{eq:SN_shock_p3}) read 

\begin{subeqnarray}\label{eq:sol_shock_p3_u2b}
 u_2 &=& \frac{1}{12}\Big[-7s_{c}-\frac{7(17s_{c}^2-6)}{\Delta_3^{1/3}}+\Delta_3^{1/3}\Big],\\
 u_2 &=& \frac{1}{12}\Big[-7s_{c}-\frac{7(17s_{c}^2-6)}{\ol{\Delta}_3^{1/3}}-\ol{\Delta}_3^{1/3}\Big],\\
 u_2 &=& \frac{1}{12}\Big[-7s_{c}-\frac{7(1-i\sqrt{3})(17s_{c}^2-6)}{2(1+i\sqrt{3})\Delta_3^{1/3}}-\frac{(1+i\sqrt{3})}{2}\Delta_3^{1/3}\Big],
\end{subeqnarray}
\noindent where $\Delta_3=7\Delta_2$, $\ol{\Delta}_3=-7\Delta_2$ and $\Delta_2$ has been defined in equation (\ref{eq:prop_def_Deltai}a). The last root (\ref{eq:sol_shock_p3_u2b}c) is complex. From (\ref{eq:prop_def_Deltai}), it follows that the first root (\ref{eq:sol_shock_p3_u2b}a) exists only when $\Delta_1^2=7776s_{c}^6-10008s_{c}^4+3359s_{c}^2-56\geq0$ and $\Delta_2>0$. The former condition has only two real roots $s_c=\pm s_1$ and is satisfied providing that $|s_c|\geq s_1$ defined by
\begin{equation*}
  s_1 = \frac{1}{18\sqrt{2}}\Big[278-\frac{2^{1/3}8411+(1373963-6687\sqrt{15603})^{2/3}}{2^{-1/3}(1373963-6687\sqrt{15603})^{1/3}}\Big]^{\tfrac{1}{2}},
\end{equation*}
\noindent while the latter condition has one root $s_c=-\sqrt{6/17}$ and requires the additional conditions $s_c<-\sqrt{6/17}$ or $s_c>s_1$. After substituting the former solution (\ref{eq:sol_shock_p3_u2b}a) into (\ref{eq:sol_shock_p3_u1}) and (\ref{eq:sol_shock_p3_u3}) one obtains the solution (\ref{eq:shock_sol_p3}c) for $u_1$ and $u_3$ when $\Delta_2>0$. It may be checked that (\ref{eq:CNS_sign_cond}b) is always satisfied over the range $[-1,-\sqrt{6/17}]\cup[s_1,1)$, while the condition (\ref{eq:CNS_sign_cond}a) is satisfied only over the range $(-1,-\sqrt{6/17}]\cup(1/6,1]$. The solutions $u_1$ and $u_3$ with opposite signs cannot satisfy those conditions for any $s$ value.

Likewise, the second solution (\ref{eq:sol_shock_p3_u2b}b) holds when $\ol{\Delta}_2=-\Delta_2>0$, that is when $-\sqrt{6/17}<s_c<s_1$ and leads to solution (\ref{eq:shock_sol_p3}b). This solution satisfies (\ref{eq:CNS_sign_cond}) if $-\sqrt{6/17}<s_c<-1/6$.

%
%
%

\end{document}